\pgfplotsset{
compat=newest,
colormap={blackwhite}{gray(0cm)=(.25); gray(1cm)=(1)}
}
\newcommand{\R}{\mathbb{R}}
\newcommand{\abs}[1]{\left\vert#1\right\vert}
\def\({\left(}
\def\){\right)}
\newcommand{\one}{\mathbbm{1}}
\newcommand{\ep}{\varepsilon}
\newtheorem{thm}{Theorem}[section]
\newtheorem{prop}[thm]{Proposition}
\newtheorem{lem}[thm]{Lemma}
\theoremstyle{definition}
\newtheorem{rem}[thm]{Remark}
\numberwithin{equation}{section}
\author[S.~Patrizi]{\href{http://stepatrizi.altervista.org/}{Stefania Patrizi} }
\address{Department of Mathematics\\
The University of Texas at Austin\\
2515 Speedway, Austin\\
TX 78712, United States of America}
\email{spatrizi@math.utexas.edu}
\author[M. Vaughan]{\href{https://maryvaughan.github.io/}{Mary Vaughan}}
\address{Department of Mathematics and Statistics\\
The University of Western Australia\\
35 Stirling HWY\\
Crawley WA 6009, Australia}
\email{mary.vaughan@uwa.edu.au}
\keywords{Phase transitions, 
nonlocal integro-differential equations,
mean curvature, 
fractional mean curvature, 
dislocation dynamics.}
\subjclass[2010]{Primary: 35R09, 74N20, 53C44.
Secondary: 35R11, 47G20.}
\begin{document}

\title[Convergence result for nonlocal phase field models]{A convergence result for the derivation of front propagation in nonlocal phase field models}

\begin{abstract}
We prove that the mean curvature of a smooth surface in $\R^n$, $n\geq 2$, arises as the limit of a sequence of functions that are intrinsically related to the difference between an $n$- and $1$-dimensional fractional Laplacian of a phase transition. Depending on the order of the fractional Laplace operator, we recover the fractional mean curvature or the classical mean curvature of the surface. Moreover, we show that this is an essential ingredient for deriving the evolution of fronts in fractional reaction-diffusion equations such as those for atomic dislocations in crystals. 
\end{abstract}


\maketitle

\section{Introduction}

In this paper, we prove that the fractional mean curvature and classical mean curvature of a smooth surface in $\R^n$, $n\geq 2$,  arises
as the $\ep$-limit of a sequence of functions $\bar{a}_\ep(x)$, $\ep>0$ (see \eqref{eq:abar}). 
These functions $\bar{a}_\ep$ appear in the study of nonlocal reaction-diffusion equations from an interaction between an $n$-dimensional and a $1$-dimensional fractional Laplacian of a phase transition $\phi$, and thus play a key role in deriving the evolution of interfaces in nonlocal phase field models, see Section \ref{sec:motivation}. 
The convergence result was first observed by Imbert and Souganidis in their 2009 unpublished preprint \cite{Imbert}. 
Since it is foundational for further study on fractional reaction-diffusion equations, we meticulously prove convergence to the mean curvature using a different approach than in \cite{Imbert}. 

Before presenting the main result, we describe the setting of the problem mathematically. 
Let $\Omega$ be a connected set in $\R^n$, $n \geq 2$, with smooth boundary $\Gamma := \partial \Omega$. 
Let $d = d(x)$ be the signed distance function associated to $\Omega$ given by
\begin{equation}\label{eq:signed-distance}
d(x)
	= \begin{cases}
	d(x, \Gamma) & \hbox{if}~x \in \Omega\\
	-d(x,\Gamma) & \hbox{otherwise}. 
	\end{cases}
\end{equation}
For $\rho>0$, define the neighborhood $Q_\rho$ of $\Gamma$ by
\[
Q_\rho = \{x\in \R^n : |d(x)| < \rho\}.
\]
We assume there is some $\rho>0$ such that
\begin{equation}\label{eq:d-assumption}
 d~\hbox{is smooth in}~Q_{2\rho};  \quad \hbox{in particular,}~|\nabla d|=1~\hbox{in}~Q_{2\rho}. 
\end{equation}

\begin{rem}
In what follows, one can replace $d$ by any Lipschitz function $\tilde{d}$ such that $\tilde{d} \equiv d$ in $Q_{\rho}$ and $\tilde{d}$ is smooth in $Q_{2\rho}$. 
\end{rem}

Let $0 < s < 1$ be fixed throughout the paper. 
We consider the fractional mean curvature of order $2s$ of the surface $\Gamma$ as developed in \cite{Imbert09}. Towards this end, define the singular measure
\[
\nu(dz) = \frac{dz}{|z|^{n+2s}},
\]
and set
\begin{align*}
\kappa[x, d] 
	&= 
\nu(\{z : d(x+z) > d(x),~\nabla d(x) \cdot z < 0\}) \\
	&\quad - \nu(\{z : d(x+z) < d(x),~\nabla d(x) \cdot z > 0\}). 
\end{align*}
By \eqref{eq:d-assumption},    $\kappa[x,d]$ is finite in $Q_\rho$ precisely when $s \in (0, \frac12)$, see \cite[Lemma 1]{Imbert09}, and it is the fractional mean curvature at $x \in \Gamma$. 
When $s \in [\frac12,1)$, we instead consider the classical mean curvature of the surface $\Gamma$, see for instance \cite{GilbargTrudinger}. 
Let us simply recall that, since $\Gamma$ is smooth, the mean curvature at $x \in \Gamma$ is well defined and is given by $ -\Delta d(x) /(n-1)$. 
For $n \geq 1$, let $\mathcal{I}_n^s = -c_{n,s}(-\Delta)^s$ denote, up to a constant, the fractional Laplacian of order $2s$ in $\R^n$. 
Specifically, the operator $\mathcal{I}_n^s$ is a nonlocal integro-differential operator given by  
\begin{equation}\label{eq:operator}
\mathcal{I}_n^s u(x) 
	= \operatorname{P.V.}~ \int_{\R^n} \( u(x+y) - u(x)\) \,\frac{dy}{\abs{y}^{n+2s}}, \quad x \in \R^n,
\end{equation}
where $\operatorname{P.V.}$ indicates that the integral is taken in the principal value sense.
Assume that $W$ is a double-well potential satisfying
\begin{equation}\label{eq:W}
\begin{cases}
W \in C^{2, \beta} (\R) & \hbox{for some}~0 < \beta <1 \\
W(0) = W(1) = 0\\
W(u)>0 & \hbox{for any}~u \in (0,1)\\
W'(0)=W'(1)=0\\
W''(0)=W''(1) >0.
\end{cases}
\end{equation}
The phase transition $\phi :\R \to (0,1)$ is then the unique solution to 
\begin{equation} \label{eq:standing wave}
\begin{cases}
 \mathcal{I}_1^s[\phi] = W'(\phi) & \hbox{in}~\R\\
 \dot{\phi}>0 & \hbox{in}~\R\\
\phi(-\infty) = 0, \quad \phi(+\infty)=1,\quad \phi(0) = \frac{1}{2},
\end{cases}
\end{equation}
where $\mathcal{I}_1^s$ denotes the nonlocal operator in \eqref{eq:operator} with $n=1$. See Section \ref{sec:phi} for more on $\phi$. 

Define next the function $a_{\ep}=a_{\ep}(\xi; x)$ by
\begin{equation}\label{aepsilondef}
a_{\ep}(\xi; x)= \int_{\R^n} \( \phi\(\xi + \frac{d(x+\ep z)- d(x)}{\ep}\) - \phi\( \xi + \nabla d(x) \cdot z\) \) \frac{dz}{\abs{z}^{n+2s}},
\end{equation}
where $(\xi,x) \in \R \times \R^n$ and $d$ is in \eqref{eq:signed-distance}. 
Roughly speaking, one may view $a_{\ep}$ as a nonlocal operator acting on $\phi = \phi(\xi)$.

\begin{lem}\label{lem:ae-frac}
For all $x\in Q_\rho$, it holds that
\[
a_{\ep}\left(\frac{d(x)}{\ep}; x\right)
	=  \ep^{2s} \mathcal{I}_n^s \left[ \phi \left( \frac{d(\cdot)}{\ep}\right) \right] (x) - C_{n,s} \mathcal{I}_1^s  \phi \left( \frac{d(x)}{\ep}\right) 
\]
where $C_{n,s}>0$ is given in \eqref{eq:Cns}. 
\end{lem}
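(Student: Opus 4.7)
The plan is to algebraically split the single integral defining $a_\ep(d(x)/\ep;x)$ into two pieces, identify the first as $\ep^{2s}\mathcal{I}_n^s[\phi(d(\cdot)/\ep)](x)$ after the rescaling $y=\ep z$, and identify the second as $C_{n,s}\mathcal{I}_1^s\phi(d(x)/\ep)$ by integrating out the directions orthogonal to $\nabla d(x)$.

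First, evaluating \eqref{aepsilondef} at $\xi=d(x)/\ep$ and adding and subtracting $\phi(d(x)/\ep)$ inside the integrand, I would write
\[
a_\ep\!\left(\tfrac{d(x)}{\ep};x\right) = I_1 - I_2,
\]
where
\[
I_1 = \mathrm{P.V.}\!\int_{\R^n}\!\left[\phi\!\left(\tfrac{d(x+\ep z)}{\ep}\right)-\phi\!\left(\tfrac{d(x)}{\ep}\right)\right]\frac{dz}{|z|^{n+2s}},\quad
I_2 = \mathrm{P.V.}\!\int_{\R^n}\!\left[\phi\!\left(\tfrac{d(x)}{\ep}+\nabla d(x)\cdot z\right)-\phi\!\left(\tfrac{d(x)}{\ep}\right)\right]\frac{dz}{|z|^{n+2s}}.
\]
Since $d$ is $C^2$ in $Q_{2\rho}$, Taylor expansion gives $d(x+\ep z)=d(x)+\ep\nabla d(x)\cdot z + O(\ep^2|z|^2)$, so both integrands share the same odd linear part $\ep^{-1}\phi'(d(x)/\ep)\,\nabla d(x)\cdot z$ near $z=0$, with residuals of order $|z|^2$. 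This both justifies the existence of each principal value (symmetric cancellation against $|z|^{-n-2s}$) and shows that the algebraic splitting agrees with the absolutely convergent original integral.

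Second, for $I_1$ the substitution $y=\ep z$ gives $dz/|z|^{n+2s} = \ep^{2s}\,dy/|y|^{n+2s}$, so $I_1 = \ep^{2s}\mathcal{I}_n^s[\phi(d(\cdot)/\ep)](x)$ by \eqref{eq:operator}. For $I_2$, I would use that the integrand depends on $z$ only through the scalar $\nabla d(x)\cdot z$ while $|z|^{-n-2s}$ is rotationally invariant, and that $|\nabla d(x)|=1$ by \eqref{eq:d-assumption}. Rotating so that $\nabla d(x)=e_1$ and writing $z=(z_1,z')\in\R\times\R^{n-1}$, I would apply Fubini on the truncated domain $\{|z|>\delta\}$ and then send $\delta\to 0$. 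The $z'$-integration gives
\[
\int_{\R^{n-1}}\frac{dz'}{(z_1^2+|z'|^2)^{(n+2s)/2}} = \frac{C_{n,s}}{|z_1|^{1+2s}},
\]
with $C_{n,s}$ as in \eqref{eq:Cns}, and the remaining one-dimensional principal value in $z_1$ is exactly $C_{n,s}\mathcal{I}_1^s\phi(d(x)/\ep)$. Subtracting $I_2$ from $I_1$ produces the claimed identity.

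The main obstacle will be the principal-value bookkeeping when $s\geq 1/2$: the algebraic splitting, the rescaling in $I_1$, and the Fubini step in $I_2$ all interact with the symmetric cutoff around $z=0$, which takes different shapes (ball in $\R^n$, strip in $\R^{n-1}$, interval in $\R$) under the reductions. I would handle this uniformly by carrying out every manipulation on the truncations $\{|z|>\delta\}$, using the fact noted above that the linear-in-$z$ parts of the integrands are odd so any symmetric truncation yields the same limit as $\delta\to 0$. This ensures no spurious boundary terms appear and the three principal values match as required.
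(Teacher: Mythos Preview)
Your proposal is correct and follows essentially the same route as the paper: split into $I_1-I_2$, rescale $I_1$ by $y=\ep z$, and reduce $I_2$ to a one-dimensional fractional Laplacian by rotating $\nabla d(x)$ to $e_1$ and integrating out the orthogonal variables to produce the constant $C_{n,s}$. The only cosmetic difference is that the paper isolates the $I_2$ computation as a standalone lemma (showing $\mathcal{I}_n^s[v(e\cdot\,\cdot\,)](x)=|e|^{2s}C_{n,s}\mathcal{I}_1^s[v](e\cdot x)$) and then invokes it, whereas you carry out that reduction inline; your extra care with the principal-value truncations is more explicit than the paper's treatment but not a different idea.
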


The lemma is proven in Section \ref{sec:ae-frac}. 
The corresponding function $\bar{a}_{\ep} = \bar{a}_{\ep}(x)$ is given by 
\begin{equation}\label{eq:abar}
\bar{a}_{\ep}(x) =  \frac{1}{\eta_\ep} \int_{\R}a_{\ep}\(\xi;x\) \dot{\phi}(\xi) \, d \xi
\end{equation}
where $x \in \R^n$ and
\begin{equation}\label{eq:scaling}
\eta_\ep 
= \begin{cases}
\ep^{2s} & \hbox{if}~s\in\left(0,\frac12\right)\\[.25em]
\ep |\ln\ep|&  \hbox{if}~s=\frac12\\[.25em]
\ep & \hbox{if}~s\in\left(\frac12,1\right).
\end{cases}
\end{equation}
Our main result is the following. 

\begin{thm}\label{lem:4} 
Let $\Omega \subset \R^n$, $n \geq 2$, be a connected set with smooth boundary $\Gamma = \partial \Omega$. 
Let $d = d(x)$ be as in \eqref{eq:signed-distance} and assume there is some $\rho>0$ such that \eqref{eq:d-assumption} holds.
Then,
\begin{align*}
\lim_{\ep \to 0} \bar{a}_{\ep}(x)
	&=   
	\begin{cases}
	\displaystyle \kappa[x, d] & \hbox{if}~s \in (0,\frac12)\\[.75em]
	\displaystyle \frac{1}{2} \frac{|S^{n-2}|}{n-1} \Delta d(x) & \hbox{if}~s  = \frac12\\[.75em]
	\displaystyle \frac{c_\star}{2} \frac{|S^{n-2}|}{n-1}\Delta d(x) & \hbox{if}~s  \in (\frac12,1)
	\end{cases}
\end{align*}
uniformly in $Q_{\rho}$ 
where, for $s \in (\frac12,1)$, the constant $c_\star>0$ is given explicitly by
\begin{equation}\label{eq:c-star}
c_\star = \frac{n+2s}{2}\left[\int_{0} ^\infty  \frac{ q^{n-2}((n-1)-q^2)}{\(1+q^2\)^{\frac{n+2s+2}{2}}}\,dq\right] \left[ \int_{\R} \int_{\R} \frac{(\phi(\xi +w) - \phi(\xi))^2}{|w|^{1+2s}} \, d w \, d\xi \right].
\end{equation}
\end{thm}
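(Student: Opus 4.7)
The plan is to reduce $\bar a_\ep$ to a single integral over $\R^n$ via Fubini and then analyze each regime of $s$ separately. Introduce the autocorrelation $K(t) := \int_\R \dot\phi(\xi+t)\dot\phi(\xi)\,d\xi$, which is nonnegative (since $\dot\phi>0$), even, and satisfies $\int_\R K = 1$. Setting $\tilde K(a) := \int_0^a K(t)\,dt$, one has $\tilde K(\pm\infty) = \pm\tfrac12$, and by the fundamental theorem of calculus $\int_\R[\phi(\xi+a)-\phi(\xi+b)]\dot\phi(\xi)\,d\xi = \tilde K(a) - \tilde K(b)$. Swapping orders in \eqref{eq:abar} yields
\[
\bar a_\ep(x) = \frac{1}{\eta_\ep}\int_{\R^n}\bigl[\tilde K(A_\ep(z)) - \tilde K(B(z))\bigr]\,\frac{dz}{\abs{z}^{n+2s}},
\]
with $A_\ep(z) := (d(x+\ep z)-d(x))/\ep$ and $B(z) := \nabla d(x)\cdot z$.

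For $s \in (0,\tfrac12)$, the change of variables $w = \ep z$ gives $\bar a_\ep(x) = \int_{\R^n}\bigl[\tilde K(D(w)/\ep) - \tilde K(L(w)/\ep)\bigr]\abs{w}^{-(n+2s)}\,dw$ with $D(w) := d(x+w)-d(x)$ and $L(w) := \nabla d(x)\cdot w$. The integrand converges pointwise to $\tfrac12[\operatorname{sgn} D(w) - \operatorname{sgn} L(w)]\abs{w}^{-(n+2s)}$, whose $\R^n$-integral is exactly $\kappa[x,d]$. For $\abs{w}$ small, the Taylor bound $\abs{D-L}\lesssim\abs{w}^2$ together with $\abs{\tilde K(a)-\tilde K(b)}\leq K(0)\abs{a-b}$ and $\abs{\tilde K}\leq\tfrac12$ produces an integrable majorant; for $\abs{w}$ large, the tail decay $\abs{K(t)}\lesssim(1+\abs{t})^{-1-2s}$ (inherited from $\dot\phi$) handles the region where $D$ and $L$ share a sign. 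Dominated convergence then gives $\bar a_\ep(x) \to \kappa[x,d]$.

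For $s \in (\tfrac12,1)$, keep the $z$-variable and write $A_\ep(z) = B(z) + \ep\psi_\ep(z)$, where $\psi_\ep(z) = (d(x+\ep z) - d(x) - \ep\nabla d(x)\cdot z)/\ep^2 \to \tfrac12 z^T D^2 d(x) z$ pointwise. The second-order Taylor expansion $\tilde K(A_\ep)-\tilde K(B) = \ep\psi_\ep K(B) + R_\ep$ with the sharpened estimate $\abs{R_\ep}\lesssim \ep^2\psi_\ep^2(1+\abs{B})^{-2-2s}$ (from $\abs{K'(t)}\lesssim(1+\abs{t})^{-2-2s}$) makes the error negligible after division by $\eta_\ep = \ep$, giving
\[
\bar a_\ep(x) \to \int_{\R^n}\tfrac12 z^T D^2 d(x) z\,K(\nabla d(x)\cdot z)\,\frac{dz}{\abs{z}^{n+2s}}.
\]
In coordinates where $\nabla d(x) = e_n$, differentiating $\abs{\nabla d}^2\equiv 1$ gives $D^2 d(x)\,e_n = 0$, so $D^2 d(x)$ is block-diagonal with an $(n-1)\times(n-1)$ block $H$ of trace $\Delta d(x)$; by rotational symmetry in $z'$ one may replace $(z')^T H z'$ by $\tfrac{\abs{z'}^2}{n-1}\Delta d(x)$ inside the integral. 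Polar coordinates on $z'$ and the substitution $r = \abs{z_n}q$ factorize the limit as $\frac{\abs{S^{n-2}}}{2(n-1)}\Delta d(x)\,I_s J_s$, where $I_s := \int_0^\infty q^n(1+q^2)^{-(n+2s)/2}\,dq$ and $J_s := \int_\R K(z_n)\abs{z_n}^{1-2s}\,dz_n$ are finite for $s>\tfrac12$. The identification $I_s J_s = c_\star$ follows from two manipulations: integration by parts on $\tfrac{d}{dq}[q^{n-1}(1+q^2)^{-(n+2s)/2}]$ gives $\int_0^\infty q^{n-2}((n-1)-q^2)(1+q^2)^{-(n+2s+2)/2}\,dq = \tfrac{2s(2s-1)}{n+2s}I_s$, and the expansion $(\phi(\xi+w)-\phi(\xi))^2 = \int_0^w\int_0^w\dot\phi(\xi+t_1)\dot\phi(\xi+t_2)\,dt_1\,dt_2$ together with Fubini identifies the double integral in \eqref{eq:c-star} with $J_s/[s(2s-1)]$.

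For $s = \tfrac12$ the formal analogue of $I_s$ diverges logarithmically, compensated by $\eta_\ep = \ep\abs{\ln\ep}$. Split $\R^n$ at $\abs{z} = 1/\ep$: the outer contribution is $O(1/\abs{\ln\ep})\to 0$ (from $\abs{\tilde K}\leq\tfrac12$ and $\int_{\abs{z}>1/\ep}\abs{z}^{-(n+1)}\,dz\lesssim\ep$), and the inner one is treated by the same Taylor expansion as above, producing the leading contribution
\[
\frac{\Delta d(x)}{2(n-1)\abs{\ln\ep}}\int_{\abs{z}\leq 1/\ep}\abs{z'}^2 K(z_n)\,\frac{dz}{\abs{z}^{n+1}}.
\]
Passing to spherical coordinates $z = \rho\omega$ with $\omega_n = \cos\theta$ and $v = \rho\cos\theta$,
\[
\int_{S^{n-1}}\abs{\omega'}^2 K(\rho\omega_n)\,d\omega = \frac{\abs{S^{n-2}}}{\rho}\int_{-\rho}^\rho(1-v^2/\rho^2)^{(n-1)/2}K(v)\,dv \sim \frac{\abs{S^{n-2}}}{\rho}
\]
as $\rho\to\infty$, by dominated convergence and $\int_\R K = 1$. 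Integrating over $0<\rho<1/\ep$ yields $\abs{S^{n-2}}\abs{\ln\ep} + O(1)$, so $\bar a_\ep(x)\to\tfrac12\tfrac{\abs{S^{n-2}}}{n-1}\Delta d(x)$. The main obstacle is the remainder estimate in the $s\geq\tfrac12$ cases: $\psi_\ep$ grows like $\abs{z}^2$ on the cutoff region, so the decay of $K$ and $K'$ along the $\nabla d(x)$ direction is essential to make the Taylor error negligible after division by $\eta_\ep$. Uniformity in $Q_\rho$ is automatic, since every estimate depends only on the $C^3$ norm of $d$ on $\overline{Q_{2\rho}}$ and intrinsic properties of $\phi$.
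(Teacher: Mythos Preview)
Your reformulation via the autocorrelation $K(t)=\int_\R\dot\phi(\xi+t)\dot\phi(\xi)\,d\xi$ and $\tilde K(a)=\int_0^a K$ is a genuinely different and more conceptual route than the paper's; the paper never performs the $\xi$-integral first but instead carries the full double integral, passes to the polar-type coordinates of Lemma~\ref{lem:ae-change-of-variables}, and decomposes the $(p,t,\xi)$-domain into many sub-regions (see Sections~\ref{sec:s-critical}--\ref{sec:s-small}). Your identifications $c_2=2s(2s-1)I_s$ and $2c_1=J_s/[s(2s-1)]$, hence $c_\star=I_sJ_s$, are correct, and the $s<\tfrac12$ argument is sound in outline (the dominated-convergence majorant is the same object handled by Lemmas~\ref{lemms<1/2_1}--\ref{lemm2s<1/2_1} and Proposition~\ref{props<1/2_1}).

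The gap is in the $s\ge\tfrac12$ remainder control. First, the decay $\abs{K'(t)}\lesssim(1+\abs{t})^{-2-2s}$ is not justified by the estimates in Lemma~\ref{lem:asymptotics}: from $\abs{\dot\phi},\abs{\ddot\phi}\lesssim(1+\abs{\xi})^{-1-2s}$ one only gets $\abs{K'(t)}\lesssim(1+\abs{t})^{-1-2s}$. Second, and more seriously, your bound $\abs{R_\ep}\lesssim\ep^2\psi_\ep^2(1+\abs{B})^{-2-2s}$ comes from the Lagrange remainder with $K'$ evaluated between $B$ and $B+\ep\psi_\ep$, so it is only valid when $\ep\abs{\psi_\ep}\lesssim 1+\abs{B}$. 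In coordinates $B=z_n$, this fails on the set $\{\abs{z_n}\lesssim\ep\abs{z'}^2\}$, i.e.\ once $\abs{z'}\gtrsim\ep^{-1/2}$; there $\ep\psi_\ep$ is order one or larger and $R_\ep$ is comparable to the leading term itself. Even where the bound holds, integrating $\ep\abs{z'}^4(1+\abs{z_n})^{-2-2s}\abs{z}^{-(n+2s)}$ over $\{\abs{z_n}<1,\,\abs{z'}<r/\ep\}$ already gives a divergent contribution as $\ep\to 0$, so the claim ``makes the error negligible after division by $\eta_\ep$'' is not correct as written. The fix is to treat the region $\{\ep\abs{\psi_\ep}\gtrsim 1+\abs{z_n}\}\subset\{\abs{z'}\gtrsim\ep^{-1/2}\}$ separately, using $\abs{\tilde K(A_\ep)-\tilde K(B)}\le 1$ directly together with the thinness in $z_n$; this yields a contribution of order $\ep^{(2s-1)/2}$ for $s>\tfrac12$ and demands yet finer splitting when $s=\tfrac12$. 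That extra case analysis is precisely the content of the paper's Step~2 in Section~\ref{sec:s-large} (the bound $\abs{I_2(\xi)}\le C(R\ep)^{-(2s-1)}$ obtained by integrating by parts in $t$) and of the multi-scale decomposition in Section~\ref{sec:s-critical}.
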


\begin{rem}
For $n=1$, $\bar{a}_\ep(x) \equiv 0$. Indeed, 
the one-dimensional setting corresponds to $\Omega = (x_0,\infty)$ for some $x_0 \in \R$. Note that $\Gamma = \{x_0\}$, and the signed distance function is precisely $d(x) = x-x_0$. Therefore, $a_\ep(\xi;x) \equiv 0$, and consequently $\bar{a}_\ep(x) \equiv 0$. 
\end{rem}

As addressed above, Theorem \ref{lem:4} was first observed by Imbert and Souganidis, see \cite[Lemma 10]{Imbert} for $s \in (0,\frac12)$ and \cite[Lemma 4]{Imbert} for $s \in [\frac12,1)$. 
While the general ideas of the proof are explained in \cite{Imbert}, we found that certain aspects of their proof require further clarification and additional rigor. 
The critical case $s = \frac12$ is particularly delicate and needs more attention. 
We give our own proof to complement 
 their work and lay the groundwork for continued study on important physical models, such as those outlined below in Section \ref{sec:motivation} for dislocations in crystalline structures. 
 
 The proof of Theorem \ref{lem:4}, especially for the critical case $s = \frac12$, is somewhat technical, but we feel it is more direct than in \cite{Imbert}. Indeed, there are several advantages to our approach. 
First, we carefully decompose the domain of integration in \eqref{eq:abar} to expose the pieces that contribute to the mean curvature in the limit. 
Then, unlike in \cite{Imbert}, we use the known asymptotic behavior of the phase transition $\phi$ and its derivatives at $\pm \infty$ (see Lemma \ref{lem:asymptotics}) to carefully obtain precise error estimates. 
Kindly note that the asymptotic behavior (as well as existence and uniqueness) of the phase transition in \cite[Equation (13)]{Imbert} is assumed and differs from our setting, see \eqref{eq:asymptotics for phi}. 

We remark that the local counterpart of Theorem \ref{lem:4} corresponding to $s=1$ is trivial. In light of Lemma \ref{lem:ae-frac}, one formally computes
\[
\ep^2\Delta\left[ \phi \(\frac{d(\cdot)}{\ep}\)\right](x) -\ddot{\phi} \left(\frac{d(x)}{\ep}\right)
	= \ep \dot{\phi} \left( \frac{d(x)}{\ep}\right) \Delta d(x), \quad \hbox{for}~x \in Q_\rho,
\]
and then defines $a_\ep(\xi;x) :=  \ep \dot{\phi} \left( \xi \right) \Delta d(x)$. Defining $\bar{a}_\ep(x)$ accordingly with $\eta_\ep = \ep$, we trivially recover $\bar{a}_\ep(x) \equiv \Delta d(x)$.
We refer the reader to \cite{Barles-DaLio,  BSS, Barles-Souganidis, Chen} for related problems.

\subsection{Application to nonlocal phase field models}\label{sec:motivation}

We now show how Theorem \ref{lem:4} is crucial for mathematical analysis of nonlocal phase field problems. 
Let us briefly describe the evolutionary problem and present some formal computations.

Consider a connected set $\Omega_0 \subset \R^n$, $n \geq 2$, with smooth boundary $\Gamma_0 = \partial \Omega_0$. 
Let $d_0 = d_0(x)$ denote the signed distance function to $\Gamma_0$ (recall \eqref{eq:signed-distance}) and assume there is some $\rho>0$ such that \eqref{eq:d-assumption} holds for $d_0$. 
Let $u^\ep= u^\ep(t,x)$ denote the solution to the fractional Allen--Cahn equation
\begin{equation}\label{eq:PDE}
 \ep \partial_t u^\ep = \frac{1}{\eta_\ep}(\ep^{2s} \mathcal{I}_n^s[u^\ep] - \widetilde{W}'(u^\ep)) \quad \hbox{in}~(0,\infty) \times \R^n
 \end{equation}
 with $\eta_\ep$ defined in \eqref{eq:scaling} and initial condition
 \begin{equation}\label{eq:PDE-initial}
 u^\ep(0,x) = \phi\left( \frac{d_0(x)}{\ep}\right) \quad\hbox{on}~\R^n,
\end{equation}
where $\phi$ solves \eqref{eq:standing wave} and $\widetilde{W}:= C_{n,s}W$ for an explicit constant $C_{n,s}>0$ (given in \eqref{eq:Cns}). 
Notice that if $W$ satisfies \eqref{eq:W}, then so does $\widetilde{W}$. 

When $n=2$ and $s = \frac12$, the PDE \eqref{eq:PDE} is a rescaled version of the evolutionary Peierls--Nabarro model for atomic dislocations in crystalline structures, see \cite[Section 1.2]{PatriziVaughan} and for the original model \cite{PN1,PN2, PN3}. 
For  the one-dimensional version of \eqref{eq:PDE}, we also refer to \cite{DipierroFigalliVald, DipierroPalatucciValdinoci, DipierroPatriziValdinoci, GonzalezMonneau,patval1}
and the references therein. 
The set $\Gamma_0$ is understood as the initial dislocation curve in the crystal, and 
the parameter $\ep>0$ represents the scaling between the microscopic scale and the mesoscopic scale. 
We send $\ep \to 0$ in \eqref{eq:PDE} to describe the evolution $(\Gamma_t)_{t \geq 0}$ of the dislocation curve $\Gamma_0$ at  the larger length scale. 
Indeed,  as $\ep \to 0$, the function $u^\ep$ approaches a piecewise function with plateaus corresponding to the global minima of $W$ (i.e.~0 and 1) and  whose jump set at time $t$ is $\Gamma_t$. 
Here, we formally derive the evolution of $(\Gamma_t)_{t \geq 0}$ using Theorem \ref{lem:4}. 

Heuristically, let $(\Omega_t)_{t  \geq 0}$ denote the evolution of the set $\Omega_0$ according to \eqref{eq:PDE}, and assume that the boundaries $\Gamma_t = \partial \Omega_t$ are smooth. 
Let $d = d(t,x)$ denote the signed distance function to $\Omega_t$ and assume that \eqref{eq:d-assumption} holds in 
\[
Q_{2\rho} = \{(t,x) \in [0,T] \times \R^n : |d(t,x)| <2 \rho\}, 
\]
for some $T>0$. The formal ansatz for deriving the evolution of the fronts $(\Gamma_t)_{t \in [0,T]}$ is 
\[
u^\ep(t,x)
	\simeq \phi\left( \frac{d(t,x)}{\ep}\right).
\] 
Plugging the ansatz into the equation, 
the left-hand side of \eqref{eq:PDE} gives
\[
\ep \partial_t u^\ep(t,x) = \dot{\phi}\left( \frac{d(t,x)}{\ep}\right) \partial_td(t,x). 
\]
Regarding the right-hand side, we find, for $(t,x)\in Q_\rho$,
\begin{align*}
\ep^{2s} \mathcal{I}_n^s[u^\ep(t,\cdot)](x) - \widetilde{W}'(u^\ep(t,x))
&\simeq \ep^{2s} \mathcal{I}_n^s\left[ \phi\left( \frac{d(t,\cdot)}{\ep}\right)\right](x)- C_{n,s}W'\left(\phi \left( \frac{d(t,x)}{\ep}\right)\right)\\
&= \ep^{2s} \mathcal{I}_n^s\left[ \phi\left( \frac{d(t,\cdot)}{\ep}\right)\right](x)- C_{n,s}\mathcal{I}_1^s [\phi]\left( \frac{d(t,x)}{\ep}\right)\\
&= a_\ep\left( \frac{d(t,x)}{\ep}; t,x\right)
\end{align*}
where the last line follows from Lemma \ref{lem:ae-frac}. 
Note that $a_\ep = a_\ep(\xi;t,x)$ in \eqref{aepsilondef} here depends on $t$ though $d(t,x)$.

Freeze a point $(t,x) \in Q_\rho$. 
Let $\xi = d(t,x)/\ep$ and assume separation of scales. 
That is, assume that $\xi$ and $(t,x)$ are unrelated. 
Then, multiplying the PDE in \eqref{eq:PDE} by $\dot{\phi}(\xi)$ and integrating over $\xi \in \R$ gives
\[
\int_{\R}  (\ep \partial_tu^\ep) \dot{\phi}(\xi) \, d \xi  = \frac{1}{\eta_\ep} \int_{\R} (\ep^{2s} \mathcal{I}_n^s[u^\ep] - \widetilde{W}'(u^\ep)) \dot{\phi}(\xi) \, d \xi.
\]
From the above computations and \eqref{eq:abar}, this yields
\[
\partial_td(t,x) \int_{\R}  [\dot{\phi}(\xi)]^2 \, d \xi 
	\simeq\frac{1}{\eta_\ep} \int_{\R} a_\ep\left(\xi; t,x\right)\, d \xi = \bar{a}_\ep(t,x).
\]
Therefore, by Theorem \ref{lem:4}, we conclude that, in $Q_\rho$, 
\[
\partial_td(t,x) \simeq  \left[\int_{\R}  [\dot{\phi}(\xi)]^2 \, d \xi\right]^{-1} \bar{a}_\ep(t,x)
	\simeq \mu  \begin{cases}
	\kappa[x,d(t,\cdot)] & \hbox{if}~s \in (0,\frac12)\\[.5em]
	\Delta d(t,x) & \hbox{if}~s \in [\frac12,1)
	\end{cases}
\]
for a constant $\mu>0$ depending on $n$ and $s$. 
This formally shows that the forming interphases as $\ep\to0$,  $(\Gamma_t)_{t \in [0,T]}$, move according to  either their fractional or classical mean curvature. 

Of course, these are formal computations that need to be rigorously checked. 
We reference the reader to \cite{PatriziVaughan} for complete heuristics and a study of \eqref{eq:PDE} in the case $s = \frac12$. 
There, our initial configuration is a superposition of functions of the form \eqref{eq:PDE-initial}  which corresponds to a finite collection of dislocations, and we show using Theorem \ref{lem:4} that they move independently and according to their mean curvature. 
The cases $s \in (0,\frac12)$ and $s \in (\frac12,1)$ will be treated separately in future work. 
Problem \eqref{eq:PDE} was previously studied in   \cite{Imbert} for any $s\in(0,1)$, for the case of one dislocation curve (i.e.~for the initial condition \eqref{eq:PDE-initial}), 
and under the additional assumption of the existence of certain correctors, see Assumption 3 there.

The stationary problem was studied in \cite{AmbrosioDePhilippisMartinazzi} and \cite{SavinValdinoci} where they prove that the fractional Allen--Cahn energy, with the same scaling as in \eqref{eq:scaling}, $\Gamma$-converges to the $2s$-fractional perimeter functional when $s \in (0,\frac12)$ and classical perimeter functional when $s \in [\frac12,1)$.
The local problem in which \eqref{eq:PDE} is instead driven by the usual Laplacian $\Delta$ was studied by Modica--Mortola \cite{MM} for the stationary problem and Chen \cite{Chen} for the evolutionary problem.

\subsection{Organization of the paper}

The rest of the paper is organized as follows. First, in Section \ref{sec:phi}, we review some properties of the phase transition $\phi$. 
Preliminaries on $a_\ep$ and the proof of Lemma \ref{lem:ae-frac} are presented in Section \ref{sec:ae-frac}. 
Section \ref{sec:prelim} contains preliminaries for the Proof of Theorem \ref{lem:4} when $s \in [\frac12,1)$.
We prove Theorem \ref{lem:4} for $s = \frac12$, $s \in (\frac12,1)$, and $s \in (0,\frac12)$, respectively, in Sections \ref{sec:s-critical}, \ref{sec:s-large}, and \ref{sec:s-small}. 

\subsection{Notations}

In the paper, we will denote by $C>0$ any universal constant depending only on the dimension $n$, $s$, and $W$. 

Denote by $S^n$ the unit sphere in $\R^{n+1}$ and $\mathcal{H}^n$ the $n$-dimensional Hausdorff measure. 

Given a function $u = u(x)$, defined on a set $A$,  we write $u = O(\ep)$ if there is $C>0$ such that
$|u(x)| \leq C \ep$ for all $x\in A$, and we write $u = o_\ep(1)$ if  $\lim_{\ep \to 0} u(x) = 0$, uniformly in $x\in A$. 

For a set $A$, we denote by $\one_A$ the characteristic function of the set $A$. 

\section{The phase transition $\phi$}\label{sec:phi}

In this section, we present background and preliminary results on the phase transition $\phi$.
Let $H(\xi)$ denote the Heavyside function. 

\begin{lem}\label{lem:asymptotics}  
There is a unique solution $\phi \in C^{2, \beta}(\R)$ of \eqref{eq:standing wave}, with $\beta$ as in \eqref{eq:W}. Moreover, there exists a constant $C>0$ 
and $\kappa>2s$ (only depending on~$s$) such that
\begin{equation}\label{eq:asymptotics for phi}
\abs{\phi(\xi) - H(\xi) +  \frac{1}{2sW''(0)} \frac{\xi}{|\xi|^{1+2s}}} \leq \frac{C}{\abs{\xi}^{\kappa}}, \quad \text{for }\abs{\xi} \geq 1,
\end{equation}
and
\begin{equation}\label{eq:asymptotics for phi dot}
\frac{1}{C\abs{\xi}^{2s+1}}\leq \dot{\phi}(\xi) \leq \frac{C}{\abs{\xi}^{2s+1}},\quad |\ddot{\phi}(\xi)| \leq \frac{C}{\abs{\xi}^{2s+1}}
 \quad \text{for }\abs{\xi} \geq 1.
\end{equation}
If $s\in(\frac12,1)$, then 
\begin{equation}\label{finite-energy}c_1:=\frac12\int_\R\int_\R\frac{(\phi(\xi+t)-\phi(\xi))^2}{|t|^{1+2s}}\,dt\,d\xi<\infty.
\end{equation}
\end{lem}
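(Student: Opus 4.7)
The plan is to address the four assertions of Lemma~\ref{lem:asymptotics} in sequence, leaning on the extensive literature on fractional phase transitions while pinning down the precise asymptotics required here. Existence and uniqueness of a $C^{2,\beta}(\R)$ solution to \eqref{eq:standing wave} (under the normalization $\phi(0)=1/2$) is by now standard: one constructs $\phi$ as a minimizer of the corresponding fractional Ginzburg--Landau energy on $\R$ following the approach of Palatucci--Savin--Valdinoci, and deduces uniqueness modulo translation via the sliding method together with the strict monotonicity of $\phi$ \`a la Cabr\'e--Sire; interior $C^{2,\beta}$ regularity then follows from fractional Schauder estimates applied to $\mathcal{I}_1^s\phi = W'(\phi)$, using $W\in C^{2,\beta}$ and $0<\phi<1$.

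For the expansion \eqref{eq:asymptotics for phi}, the heuristic is that for $|\xi|$ large, if we set $v:=\phi-H$, then \eqref{eq:standing wave} linearizes to $\mathcal{I}_1^s v = W''(0)\,v + \text{l.o.t.}$, and the fundamental profile of the operator $\mathcal{I}_1^s - W''(0)$ decays like $|\xi|^{-2s}$ with coefficient $1/(2sW''(0))$. To make this rigorous I would work with the remainder $\tilde v(\xi) := \phi(\xi) - H(\xi) + \frac{1}{2sW''(0)}\,\frac{\xi}{|\xi|^{1+2s}}$ and apply the comparison principle for $\mathcal{I}_1^s - W''(0)$ against barriers of the form $\pm A|\xi|^{-\kappa}$, choosing $\kappa \in (2s,\,2s+1)$ small enough that both the nonlinear error $W'(\phi) - W''(0)(\phi-H)$ (controlled by a Taylor expansion of $W'$ at $0$ and $1$) and the residual in applying $\mathcal{I}_1^s$ to the explicit profile $\xi/|\xi|^{1+2s}$ can be absorbed. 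This is the main technical step; it follows the strategy used for $s=1/2$ by Gonz\'alez--Monneau and subsequently extended to general $s\in(0,1)$ in the fractional dislocations literature, and it requires an explicit computation of $\mathcal{I}_1^s[\xi/|\xi|^{1+2s}]$ together with a careful matching of powers. The hard part is not the barrier itself but verifying that no "resonant" power arises: one must check that the linearized operator does not fail to be coercive on the relevant scale.

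The derivative bounds \eqref{eq:asymptotics for phi dot} then follow by a rescaling argument. At a point $\xi_0$ with $|\xi_0|$ large, consider $\psi(y) := \phi(\xi_0 + |\xi_0| y / 2) - H(\xi_0)$ on $|y|\leq 1$; by \eqref{eq:asymptotics for phi}, $\|\psi\|_{L^\infty} = O(|\xi_0|^{-2s})$, and $\psi$ satisfies a rescaled equation of the form $\mathcal{I}_1^s\psi = (|\xi_0|/2)^{2s}\,W'(H(\xi_0)+\psi)$ with bounded right-hand side. Interior $C^{2,\beta}$ estimates for $\mathcal{I}_1^s$ then yield $|\dot\phi(\xi_0)| + |\ddot\phi(\xi_0)| \leq C|\xi_0|^{-(1+2s)}$, giving the upper bounds. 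The matching lower bound on $\dot\phi$ comes from differentiating \eqref{eq:asymptotics for phi} and absorbing the $O(|\xi|^{-\kappa-1})$ error since $\kappa>2s$.

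Finally, the finite-energy bound \eqref{finite-energy} for $s\in(\tfrac12,1)$ I would obtain by splitting at $|t|=1$. On $\{|t|\leq 1\}$, Cauchy--Schwarz on $\phi(\xi+t)-\phi(\xi) = \int_{\xi}^{\xi+t}\dot\phi$ together with $\dot\phi\in L^2(\R)$ (immediate from \eqref{eq:asymptotics for phi dot}) gives
\[
\int_\R \frac{(\phi(\xi+t)-\phi(\xi))^2}{|t|^{1+2s}}\,d\xi \;\leq\; |t|^{1-2s}\,\|\dot\phi\|_{L^2}^2,
\]
which is integrable near $0$ because $s<1$. On $\{|t|>1\}$, I would decompose $\phi = H + g$ with $g\in L^2(\R)$ guaranteed by \eqref{eq:asymptotics for phi}; the Heaviside contribution is $\int_\R(H(\xi+t)-H(\xi))^2\,d\xi = |t|$, so $\int_{|t|>1} |t|^{-2s}\,dt$ converges precisely when $s>\tfrac12$, which is exactly where the threshold appears. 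The mixed and $g$-$g$ pieces are controlled by Cauchy--Schwarz using $\|g\|_{L^2}<\infty$. Thus the only genuine obstacle is the barrier construction for the asymptotic expansion; all subsequent statements follow essentially mechanically from it.
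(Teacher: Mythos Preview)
The paper does not prove this lemma at all: its ``proof'' is a list of citations (Cabr\'e--Sol\`a-Morales, Cabr\'e--Sire, Palatucci--Savin--Valdinoci for existence, uniqueness, and \eqref{finite-energy}; Gonz\'alez--Monneau and the Dipierro et al.\ papers for \eqref{eq:asymptotics for phi}; and Cabr\'e--Sire, Monneau--Patrizi, Palatucci--Savin--Valdinoci, Patrizi--Valdinoci for \eqref{eq:asymptotics for phi dot}). Your proposal goes further and sketches the actual content of those references, which is fine and largely faithful to what those papers do. Two comments are worth making.

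First, your derivation of the \emph{lower} bound on $\dot\phi$ is not quite right as written. One cannot deduce $|\dot{\tilde v}(\xi)|=O(|\xi|^{-\kappa-1})$ from the pointwise bound $|\tilde v(\xi)|=O(|\xi|^{-\kappa})$ alone; differentiating a pointwise estimate is illegitimate. The fix is either to run your rescaling/Schauder argument on the full remainder $\tilde v=\phi-H+\frac{1}{2sW''(0)}\frac{\xi}{|\xi|^{1+2s}}$ rather than on $\phi-H$ (this yields $|\dot{\tilde v}|=O(|\xi|^{-\kappa-1})$ directly, and then $\dot\phi(\xi)=\frac{1}{W''(0)}|\xi|^{-1-2s}+O(|\xi|^{-\kappa-1})$ gives the lower bound), or to argue via barriers on the differentiated equation $\mathcal{I}_1^s\dot\phi=W''(\phi)\dot\phi$, using $\dot\phi>0$ and $W''(\phi)\to W''(0)>0$. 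Either route is standard in the cited literature.

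Second, your proof of \eqref{finite-energy} is genuinely different from the route the paper invokes: in Palatucci--Savin--Valdinoci and Cabr\'e--Sire the finiteness of the Gagliardo energy is essentially a byproduct of the variational construction (the layer is built as a minimizer, hence has finite energy by definition), whereas you deduce it a posteriori from the decay estimates. Your argument is correct and in a sense more informative, since it makes explicit why the threshold $s=\tfrac12$ appears (through $\int_\R(H(\xi+t)-H(\xi))^2\,d\xi=|t|$ and the integrability of $|t|^{-2s}$ at infinity). The variational route is shorter but less transparent about the role of $s$.
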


\begin{proof}
The existence of a unique solution of \eqref{eq:standing wave} is proven in  \cite{CabreSola} for $s=\frac12$ and in 
\cite{CabreSire,PalatucciSavinValdinoci} for any $s\in(0,1)$ together with \eqref{finite-energy} when $s>\frac12$. 
Estimate \eqref{eq:asymptotics for phi} is  proven 
in \cite{GonzalezMonneau} for  $s=\frac{1}{2}$  and 
in \cite{DipierroFigalliVald}  and  \cite{DipierroPalatucciValdinoci}, respectively, when  $s\in\left(0,\frac{1}{2}\right)$ and $s\in\left(\frac{1}{2},1\right)$. 
Finally, estimates \eqref{eq:asymptotics for phi dot} are shown in \cite{CabreSire,MonneauPatrizi,PalatucciSavinValdinoci,patval2}. 
\end{proof}

We will use several times that, by \eqref{eq:asymptotics for phi}, if $M>1$, 
\begin{equation}\label{phi-infyinftyaeptermxiestbis}
\int_{-M}^M
\dot\phi(\xi)\,d\xi=\phi(M)-\phi(-M)=1+O(M^{-2s}),
\end{equation}
and 
\begin{equation}\label{phi-infyinftyaeptermxiest}
\int_{\{|\xi|>M\}}\dot\phi(\xi)\,d\xi=1-  \int_{-M}^M\dot\phi(\xi)\,d\xi=O(M^{-2s}).
\end{equation}

\section{The function $a_\ep$ and fractional Laplacians of $\phi$}\label{sec:ae-frac}

Here, we present some preliminaries on $a_\ep$ and prove Lemma \ref{lem:ae-frac}.

We will use the following lemma throughout the paper without reference. 
The proof is a standard computation in polar coordinates.  

\begin{lem}
There exist $C_1,\, C_2>0$ such that for any $R>0$,
$$\int_{\{|z|<R\}}\frac{dz}{|z|^{n+2s-2}}= C_1 R^{2-2s}
\quad \hbox{and} \quad \int_{\{|z|>R\}}\frac{dz}{|z|^{n+2s}}= \frac{C_2}{R^{2s}}.$$
\end{lem}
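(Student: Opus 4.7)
The plan is a direct change to polar coordinates. Writing $z = r\omega$ with $r = |z| \in (0,\infty)$ and $\omega \in S^{n-1}$, the Lebesgue measure decomposes as $dz = r^{n-1}\, dr \, d\mathcal{H}^{n-1}(\omega)$, and the integrands are radial, so the angular integration contributes a factor of $|S^{n-1}| = \mathcal{H}^{n-1}(S^{n-1})$ in each case. Both integrals thus reduce to one-dimensional integrals of pure powers of $r$.

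For the first integral, the radial integrand simplifies as $|z|^{-(n+2s-2)} r^{n-1} = r^{1-2s}$. Since $s \in (0,1)$ we have $1 - 2s > -1$, so $r^{1-2s}$ is integrable at the origin, and I would compute
\[
\int_{\{|z|<R\}} \frac{dz}{|z|^{n+2s-2}}
= |S^{n-1}| \int_0^R r^{1-2s}\, dr
= \frac{|S^{n-1}|}{2-2s}\, R^{2-2s},
\]
which gives the claimed scaling with $C_1 = |S^{n-1}|/(2-2s)$. For the second integral, similarly $|z|^{-(n+2s)} r^{n-1} = r^{-1-2s}$, which is integrable at infinity since $2s > 0$, yielding
\[
\int_{\{|z|>R\}} \frac{dz}{|z|^{n+2s}}
= |S^{n-1}| \int_R^\infty r^{-1-2s}\, dr
= \frac{|S^{n-1}|}{2s}\, R^{-2s},
\]
so $C_2 = |S^{n-1}|/(2s)$.

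There is no real obstacle here; the only thing worth flagging is to explicitly note that the conditions $s \in (0,1)$ guarantee convergence at the endpoints ($r=0$ for the first integral and $r=\infty$ for the second), which is why $C_1$ and $C_2$ are finite and strictly positive. The constants $C_1, C_2$ could be left implicit as in the statement, or recorded explicitly as above in case they are needed later in the paper.
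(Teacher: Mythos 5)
Your proof is correct and matches the paper's approach exactly: the paper simply remarks that the lemma follows from ``a standard computation in polar coordinates,'' which is precisely what you carry out, including the explicit constants $C_1 = |S^{n-1}|/(2-2s)$ and $C_2 = |S^{n-1}|/(2s)$.
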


Accordingly, by the regularity of $\phi$ and $d$, the integral in \eqref{aepsilondef} is well defined for $x \in Q_\rho$:
\begin{align*}
\int_{\R} &\left| \phi\(\xi + \frac{d(x+\ep z)- d(x)}{\ep}\) - \phi\( \xi + \nabla d(x) \cdot z\) \right| \frac{dz}{\abs{z}^{n+2s}}\\
&\leq C \left[\int_{\{|z|<1\}} \frac{|d(x+\ep z) - d(x) -\nabla d(x) \cdot (\ep z)|}{\ep} \frac{dz}{\abs{z}^{n+2s}}
	+  \int_{\{|z|>1\}}\frac{dz}{\abs{z}^{n+2s}}\right]\\
&\leq C \left[\int_{\{|z|<1\}} \ep |z|^2 \frac{dz}{\abs{z}^{n+2s}}
	+  \int_{\{|z|>1\}}\frac{dz}{\abs{z}^{n+2s}}\right] \leq C. 
\end{align*}

We will need the next result that allows us to view one-dimensional fractional Laplacians of functions defined over $\R$ equivalently as $n$-dimensional fractional Laplacians. 

\begin{lem}\label{lem:one to n}
For a vector $e \in \R^n$ and a function $v\in C^{1,1}(\R)$, let $v_e(x) = v(e\cdot x): \R^n \to \R$. Then,
\[
\mathcal{I}_n^s[v_e](x) =|e|^{2s} C_{n,s} \mathcal{I}_1^s[v](e\cdot x)
\]
where 
\begin{equation}\label{eq:Cns}
C_{n,s} =  \int_{\R^{n-1}} \frac{1}{(\abs{y}^2 + 1)^{\frac{n+2s}{2}} } \, dy.
\end{equation}
Consequently,
\begin{equation*}\label{eq:one to n}
|e|^{2s} C_{n,s} \mathcal{I}_1^s[v](\xi) =\operatorname{P.V.} \int_{\R^n} \( v(\xi + e \cdot z) - v(\xi)\) \frac{dz}{\abs{z}^{n+2s}}, \quad \xi \in \R.
\end{equation*}
\end{lem}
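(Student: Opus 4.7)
The plan is to exploit the rotational invariance of the singular kernel $dy/|y|^{n+2s}$ and reduce the $n$-dimensional integral to a one-dimensional principal value integral by integrating out the transverse directions via Fubini.

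First, I would observe that since $v_e$ depends on $x$ only through $e \cdot x$, we may let $\xi = e \cdot x$ and rewrite
\[
\mathcal{I}_n^s[v_e](x) = \mathrm{P.V.}\int_{\R^n} \bigl(v(\xi + e \cdot y) - v(\xi)\bigr)\,\frac{dy}{|y|^{n+2s}}.
\]
By applying a rotation $R \in SO(n)$ sending $|e|\,\mathbf{e}_1$ to $e$ (which leaves the kernel invariant), we may reduce to the case $e = |e|\,\mathbf{e}_1$. Then splitting $y = (y_1, y')$ with $y' \in \R^{n-1}$ and using the substitution $y' = |y_1|\,u$,
\[
\int_{\R^{n-1}} \frac{dy'}{(y_1^2 + |y'|^2)^{(n+2s)/2}} = \frac{1}{|y_1|^{1+2s}} \int_{\R^{n-1}} \frac{du}{(1+|u|^2)^{(n+2s)/2}} = \frac{C_{n,s}}{|y_1|^{1+2s}}.
\]
Finally, I would rescale via $t = |e|\,y_1$ to pick up the factor $|e|^{2s}$ and recognize the remaining one-dimensional integral as $\mathcal{I}_1^s[v](\xi)$. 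The second identity in the lemma is then just a restatement obtained by specializing the first to a point $x$ with $e \cdot x = \xi$.

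The main technical obstacle is the interplay between the principal value and Fubini's theorem, since $1/|y_1|^{1+2s}$ is not integrable near $y_1 = 0$. To handle this rigorously, I would decompose $\R^n = \{|y| < \delta\} \cup \{|y| \geq \delta\}$. On the outer piece the integrand is absolutely integrable and Fubini applies directly, so the transverse integration produces the kernel $C_{n,s}/|y_1|^{1+2s}$ on $\{|y_1| \geq c\delta\}$ after projecting. On the inner piece, I would use the $C^{1,1}$ regularity of $v$ to expand
\[
v(\xi + |e|y_1) - v(\xi) = |e|y_1\,\dot v(\xi) + O(|y_1|^2),
\]
and note that the odd term integrates to zero by symmetry in $y_1$, while the even remainder is absolutely integrable against $|y|^{-(n+2s)}$ over $\{|y| < \delta\}$. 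Sending $\delta \to 0$ and combining both pieces then produces precisely the principal value integral defining $\mathcal{I}_1^s[v](\xi)$, completing the identification and justifying the exchange of limits throughout.
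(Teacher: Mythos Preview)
Your proposal is correct and follows essentially the same approach as the paper: reduce by rotation to the case $e = |e|\,e_1$, integrate out the transverse $y'$-variables via the substitution $y' = |y_1|\,u$ to produce the one-dimensional kernel $C_{n,s}/|y_1|^{1+2s}$, and then rescale in $y_1$ to extract the factor $|e|^{2s}$. The paper carries out exactly these steps but does not spell out the principal value/Fubini justification you sketch in your last paragraph; your treatment of that point is more careful than the paper's, which simply passes the $\mathrm{P.V.}$ through the computation.
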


\begin{proof}
The case $e=0$ is trivial. Therefore, let us assume $e\neq0$ and let $c:=|e|>0$. 
Begin by writing
\[
\mathcal{I}_n^s[v_e](x) 
	=\operatorname{P.V.}\int_{\R^n} \(v(e \cdot x + e \cdot z) - v(e \cdot x)\) \frac{dz}{\abs{z}^{n+2s}}.
\]
By rotation, it is enough to prove the result for $e = ce_1$. 
Observe for $x = (x_1,x') \in \R \times \R^{n-1}$ that
\begin{align*}
\mathcal{I}_n^s[v_{ce_1}](x)
	&=\operatorname{P.V.}\int_{\R^n} \(v(cx_1 +cz_1) - v(cx_1)\) \frac{dz}{\abs{z}^{n+2s}}\\
	&=\operatorname{P.V.} \int_{\R} \(v(cx_1 +cz_1) - v(cx_1)\) \( \int_{\R^{n-1}} \frac{1}{\abs{(z_1,z')}^{n+2s}} \, dz' \) \, dz_1.
\end{align*}
Since
\begin{align*}
\int_{\R^{n-1}} \frac{1}{\abs{(z_1,z')}^{n+2s}} \, dz'
	&= \int_{\R^{n-1}} \frac{1}{(\abs{z'}^2 + z_1^2)^{\frac{n+2s}{2}} } \, dz'\\
	&= \frac{1}{\abs{z_1}^{n+2s}} \int_{\R^{n-1}} \frac{1}{(\abs{y}^2 + 1)^{\frac{n+2s}{2}} } \, \abs{z_1}^{n-1}dy
	= \frac{C_{n,s}}{\abs{z_1}^{1+2s}},
\end{align*}
we have
\begin{align*}
\mathcal{I}_n^s[v_{ce_1}](x) 
	&= C_{n,s}\operatorname{P.V.}\int_{\R} \(v(cx_1 +cz_1) - v(cx_1)\) \, \frac{dz_1}{\abs{z_1}^{1+2s}}\\&
	= c^{2s}C_{n,s}\operatorname{P.V.}\int_{\R} \(v(cx_1 +z_1) - v(cx_1)\) \, \frac{dz_1}{\abs{z_1}^{1+2s}}
	= c^{2s}C_{n,s} \mathcal{I}_1^s[v](ce_1\cdot x).
\end{align*}
\end{proof}

We are now ready to prove Lemma \ref{lem:ae-frac}.

\begin{proof}[Proof of Lemma \ref{lem:ae-frac}]
First, we write $a_{\ep} = a_{\ep}\(d(x)/\ep ; x\)$ as
\begin{equation}\label{eq:split-for-ae-laplac}
\begin{aligned}
a_{\ep}
	&=\operatorname{P.V.} \int_{\R^n} \( \phi\(\frac{d(x+\ep z)}{\ep}\) - \phi\(\frac{d(x)}{\ep}\) \)\, \frac{dz}{\abs{z}^{n+2s}} \\
	&\quad -\operatorname{P.V.}\int_{\R^n} \(\phi\( \frac{d(x)}{\ep} + \nabla d(x) \cdot z\) - \phi\(\frac{d(x)}{\ep}\)\) \, \frac{dz}{\abs{z}^{n+2s}}.
\end{aligned}
\end{equation}
Since $e = \nabla d(x)$  is well defined when $\abs{d(x)} \leq \rho$ and a unit vector, by applying Lemma \ref{lem:one to n} to the second integral in the right-hand side of  \eqref{eq:split-for-ae-laplac} and a change of variables in the first integral, we obtain
\begin{align*}
a_{\ep}
	&= \ep^{2s} \operatorname{P.V.} \int_{\R^n} \( \phi\(\frac{d(x+y)}{\ep}\) - \phi\(\frac{d(x)}{\ep}\) \)\, \frac{dy}{\abs{y}^{n+1}} 
	 -C_{n,s} \mathcal{I}_1^s[\phi]\(\frac{d(x)}{\ep}\)\\
	&= \ep^{2s} \mathcal{I}_n^s\left[\phi\(\frac{d(\cdot)}{\ep}\) \right](x) - C_{n,s} \mathcal{I}_1^s[\phi]\(\frac{d(x)}{\ep}\).
\end{align*}
\end{proof}

\section{Preliminaries for $s \in [\frac12,1)$}\label{sec:prelim}

We give preliminaries for the proof Theorem \ref{lem:4} when $s \in [\frac12,1)$. 

\subsection{Mean curvature}

Assume that $d$ in \eqref{eq:signed-distance} satisfies \eqref{eq:d-assumption}. 
Then, in $Q_\rho$, the eigenvalues of $D^2d(x)$ are
$$\lambda_i(x)=\frac{-\kappa_i}{1-\kappa_i d(x)}, \, i=1,\ldots,n-1, \quad\lambda_n(x)=0,$$
see, for example, \cite[Lemma 14.17]{GilbargTrudinger}.  
Moreover, since $|\nabla d| = 1$ in $Q_\rho$, we have the equation $D^2d(x)\nabla d(x)=0$ from which we see that $\nabla d(x)$ is an eigenvector, with norm 1, for $D^2d(x)$ with associated eigenvalue $\lambda_n(x) = 0$. 

Let us denote 
\[
 A(y'):=\frac12\sum_{i=1}^{n-1}\lambda_iy_i^2, \quad y' = (y_1,\dots, y_{n-1}) \in \R^{n-1}.
\]
Note, for $x\in Q_\rho$, that
\begin{equation}\label{eq:Atheta-MC}
 \int_{S^{n-2}} A(\theta) \, d \theta 
 	= \frac{1}{2} \sum_{i=1}^{n-1}\lambda_i\int_{S^{n-2}} \theta_i^2 \, d \theta
	=\frac{1}{2} \sum_{i=1}^{n-1}\lambda_i\frac{|S^{n-2}|}{n-1}
	=\frac{1}{2} \frac{|S^{n-2}|}{n-1}\Delta d(x).
\end{equation}

\subsection{Change of variables in $\bar{a}_\ep$}

In light of \eqref{eq:Atheta-MC}, 
we write the following lemma for $\bar{a}_\ep$ as an integral in terms of polar coordinates which exposes $A(\theta)$, $\theta \in S^{n-2}$. 
This will set the stage for proving Theorem \ref{lem:4} when $s \in [\frac12,1)$. 

\begin{lem}\label{lem:ae-change-of-variables}
Let $s \in [\frac12,1)$ and $r \in (0,\frac{\rho}{2})$ be fixed. Then, for all $(t,x) \in Q_\rho$ and $0 < \ep < r$, 
\[
\bar{a}_\ep(t,x)
= O\left(\frac{\ep^{2s}}{r^{2s}\eta_\ep}\right) 
+ \frac{\ep^{2s}}{\eta_\ep} \int_{\R} G(\xi) \dot{\phi}(\xi) \, d \xi
\]
where
\begin{align*}
G(\xi)
&=\int_0^r \frac{dp}{p^{2s+1}} \int_{S^{n-2}} d \theta
	 \int_{-\frac{r}{p}}^{\frac{r}{p}}
 \left( \phi\(\xi + \frac{p}{\ep}\(t+pb(\theta,t,r)\)\) - \phi\( \xi + \frac{tp}{\ep}\)\right)\frac{dt}{ (t^2 +1)^{\frac{n+2s}{2}}} 
\end{align*}
with
\begin{equation}\label{bthetatrde}
b(\theta,t,r):=A(\theta)+O\left( r(1+t^2)\right).
\end{equation}
\end{lem}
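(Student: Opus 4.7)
The plan is to decompose the outer $z$-integration in $a_\ep(\xi;x)$ into a near-field $\{|z|<r/\ep\}$ and a far-field $\{|z|\geq r/\ep\}$, rescale the near-field to $\{|y|<r\}$ via $y=\ep z$, and then switch to cylindrical coordinates adapted to the unit vector $e:=\nabla d(x)$. In these coordinates the second-order Taylor expansion of $d(x+y)-d(x)$ produces the quadratic form $A(\theta)$ that defines $b(\theta,t,r)$, while the higher-order terms give the $O(r(1+t^2))$ remainder in \eqref{bthetatrde}.

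First I would handle the far field. Using $|\phi|\leq 1$ and the standard bound $\int_{|z|>r/\ep}|z|^{-n-2s}\,dz\leq C(\ep/r)^{2s}$, the far-field piece of $a_\ep(\xi;x)$ is bounded uniformly in $\xi,x$ by $C(\ep/r)^{2s}$; after multiplying by $\dot\phi(\xi)/\eta_\ep$ and using $\int_\R\dot\phi\,d\xi=1$, this contributes exactly the $O(\ep^{2s}/(r^{2s}\eta_\ep))$ error. On the near field I would substitute $y=\ep z$, which turns the measure into $\ep^{2s}\,dy/|y|^{n+2s}$ and leaves the two arguments of $\phi$ as $\xi+(d(x+y)-d(x))/\ep$ and $\xi+(\nabla d(x)\cdot y)/\ep$.

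Next, writing $y=y_\parallel e+y_\perp$ with $y_\perp\in e^\perp$, and introducing cylindrical coordinates $y_\perp=p\theta$ with $p\in(0,\infty)$, $\theta\in S^{n-2}$, together with $t=y_\parallel/p\in\R$, I would obtain $|y|=p\sqrt{1+t^2}$, $\nabla d(x)\cdot y=pt$, and the measure
\[
\frac{dy}{|y|^{n+2s}}=\frac{dp\,dt\,d\theta}{p^{2s+1}(1+t^2)^{(n+2s)/2}}.
\]
The ball $\{|y|<r\}$ corresponds to $\{p\sqrt{1+t^2}<r\}$, which I would enlarge to the cylinder $\{0<p<r,\,|t|<r/p\}$ appearing in $G$; the excess lies in the shell $r\leq|y|\leq r\sqrt{2}$ and contributes $O(r^{-2s})$, again absorbed into the error term. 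Using the Taylor expansion $d(x+y)-d(x)=\nabla d(x)\cdot y+\tfrac12 y^T D^2 d(x)y+O(|y|^3)$ together with $D^2 d(x)\nabla d(x)=0$ (which annihilates all cross terms), I would compute $\tfrac12 y^T D^2 d(x)y=p^2 A(\theta)$, giving
\[
\frac{d(x+y)-d(x)}{\ep}-\frac{pt}{\ep}=\frac{p^2}{\ep}\bigl(A(\theta)+O(p(1+t^2)^{3/2})\bigr)=\frac{p^2}{\ep}\,b(\theta,t,r).
\]
In the cylindrical region the constraint $p|t|\leq r$ forces $p(1+t^2)^{3/2}\leq Cr(1+t^2)$, which is exactly the form \eqref{bthetatrde} demands. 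Fubini in $\xi$ and $(p,t,\theta)$ then produces the announced expression $\ep^{2s}\eta_\ep^{-1}\int_\R G(\xi)\dot\phi(\xi)\,d\xi$.

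The main obstacle I foresee is the choice of the ``right'' domain for the polar coordinates: the natural ball $\{p\sqrt{1+t^2}<r\}$ does not factorize cleanly in $p$ and $t$, whereas the cylinder $\{p<r,\,|t|<r/p\}$ in the statement does, at the price of strictly containing the ball. Verifying that (i) the excess shell contributes only $O(\ep^{2s}/(r^{2s}\eta_\ep))$ to $\bar a_\ep$, and (ii) the cubic Taylor remainder on this enlarged cylinder still fits the factorized form $A(\theta)+O(r(1+t^2))$ required by $b(\theta,t,r)$, is the bookkeeping core of the argument; everything else is a routine change of variables and Fubini.
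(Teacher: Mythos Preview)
Your proposal is correct and follows essentially the same route as the paper. Two minor points of comparison: (i) the paper avoids your ball-to-cylinder enlargement step by splitting from the outset at the product region $\{|y'|<r,\ |y_n|<r\}$, which after $t=y_n/|y'|$ and $y'=p\theta$ is exactly the cylinder $\{0<p<r,\ |t|<r/p\}$; (ii) to obtain $\tfrac12\,y^T D^2 d(x)\,y=p^2 A(\theta)$ with $A$ in the diagonal form $A(\theta)=\tfrac12\sum_{i=1}^{n-1}\lambda_i\theta_i^2$ used later, you must choose the orthonormal frame of $e^\perp$ to be eigenvectors of $D^2 d(x)$---the paper does this explicitly via an orthogonal matrix $T$ whose columns are eigenvectors with $v_n=\nabla d(x)$.
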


\begin{rem}
Notice if $s \in (0,\frac12)$, then $O(\frac{\ep^{2s}}{r^{2s}\eta_\ep}) = O(\frac{1}{r^{2s}})$ is ineffectual for $r$ small. 
\end{rem}

\begin{proof}
Let $T=(v_1,\ldots,v_n)$ be an orthogonal matrix whose columns are a set of orthonormal  eigenvectors $v_1, \ldots,v_n$ for the eigenvalues $\lambda_1,\ldots, \lambda_n$ with 
$v_n=\nabla d(x)$. 
We make the change of variables $\ep z=Ty$ to write
\begin{align*}
a_\ep(\xi;x)
=\ep^{2s} \int_{\R^n}\( \phi\(\xi + \frac{1}{\ep}(d(x+Ty) - d(x))\) - \phi\( \xi + \frac{y_n}{\ep}\) \) \frac{dy}{\abs{y}^{n+2s}}
\end{align*} 
where $y=(y',y_n)$ with $y'\in\R^{n-1}$. 
Then, we split $a_\ep$ as follows, for $0 <\ep < r$,  
\[
a_\ep(\xi;x)
=\int_{\{|y'|, |y_n|<r\}}(\ldots)
+\int_{\{|y'|>r\}\cup\{|y_n|>r\}}(\ldots).
\]
Since $0<\phi<1$, we have that
\begin{align*}
&\ep^{2s} \abs{\int_{\{|y'|>r\}\cup\{|y_n|>r\}}\( \phi\(\xi + \frac{1}{\ep}\(d(x+Ty) - d(x)\)\) - \phi\( \xi + \frac{y_n}{\ep}\) \) \frac{dy}{\abs{y}^{n+2s}}}\\&
\leq 2\ep^{2s}\int_{\{|y|>r\}}\frac{dy}{\abs{y}^{n+2s}}
\leq \frac{C\ep^{2s}}{r^{2s}}.
\end{align*}
Therefore,
\begin{equation}\label{eq:ae-r-split}
a_\ep(\xi;x)
	=O \left(\frac{\ep^{2s}}{r^{2s}}\right) + \ep^{2s} G(\xi)
\end{equation}
where
\[
G(\xi) = \int_{\{|y'|, |y_n|<r\}} \( \phi\(\xi + \frac{1}{\ep}(d(x+Ty) - d(x))\) - \phi\( \xi + \frac{y_n}{\ep}\) \) \frac{dy}{\abs{y}^{n+2s}}.
\]
Now,  if $|y_n|, |y'|< r$, then, $x +Ty \in Q_{2\rho}$ and by construction of $T$ and the regularity of $d$, we have
\begin{align*}
d(x+Ty) - d(x) 
	&= \nabla d(x) \cdot Ty+\frac{1}{2}D^2d(x) Ty\cdot Ty+O(r|y|^2) \\
	&= y_n + A(y') + O(r|y|^2).
\end{align*}
Consequently, 
\[
G(\xi) = \int_{\{|y'|, |y_n|<r\}} \( \phi\(\xi + \frac{1}{\ep}(y_n + A(y') + O(r|y|^2)\) - \phi\( \xi + \frac{y_n}{\ep}\) \) \frac{dy}{\abs{y}^{n+2s}}.
\]

Next, we make the further change of variable $t = y_n/|y'|$ to write
\begin{align*}
 G(\xi)
&=   \int_{\{|y'|<r\}} \frac{dy'}{|y'|^{n+2s}} 
	 \int_{\{|y_n|<r\}}\frac{dy_n}{ \left( \frac{y_n^2}{|y'|^2} +1\right)^{\frac{n+2s}{2}}}\\
&\qquad \times \( \phi\(\xi + \frac{1}{\ep}\(y_n+A(y')+O(r|y|^2)\)\) - \phi\( \xi + \frac{y_n}{\ep}\) \)\\
&=   \int_{\{|y'|<r\}} \frac{dy'}{|y'|^{n+2s-1}} 
	 \int_{\{|t|<\frac{r}{|y'|}\}}\frac{dt}{ (t^2 +1)^{\frac{n+2s}{2}}}\\
&\qquad\times\( \phi\(\xi + \frac{|y'|}{\ep}\(t+|y'|A\left(\frac{y'}{|y'|}\right)+|y'|O(r(1+t^2))\)\) - \phi\( \xi + \frac{t|y'|}{\ep}\) \).
\end{align*}
Finally, using polar coordinates $y' = p \theta$ with $p>0$ and $\theta \in S^{n-2}$, we write
\begin{align*}
G(\xi)
&=  \int_0^r \frac{dp}{p^{2s+1}} \int_{S^{n-2}} d \theta
	 \int_{\{|t|<\frac{r}{p}\}}\frac{dt}{ (t^2 +1)^{\frac{n+2s}{2}}}\\
&\qquad\times\( \phi\(\xi + \frac{p}{\ep}\(t+pA(\theta)+pO(r(1+t^2))\)\) - \phi\( \xi + \frac{tp}{\ep}\) \).
\end{align*}
With \eqref{eq:ae-r-split} and recalling that $\int_{\R} \dot{\phi}(\xi)\, d\xi=1$, this completes the proof.
\end{proof}

\section{Proof of Theorem \ref{lem:4} for $s = \frac12$}\label{sec:s-critical}

Throughout this section, assume that $s = \frac12$.

Fix $r \in(0,\frac{\rho}{2})$ and let $0 < \ep < r$. By Lemma \ref{lem:ae-change-of-variables} we have
\begin{equation}\label{eq:aebarI}
\bar{a}_\ep(x)
= O\left(\frac{1}{r | \ln \ep|}\right) + \frac{1}{| \ln \ep|} \int_{\R} G(\xi) \dot{\phi}(\xi) \, d \xi
\end{equation}
where
\begin{align*}
G(\xi)
&=\int_0^r \frac{dp}{p^{2}} \int_{S^{n-2}} d \theta
	 \int_{-\frac{r}{p}}^{\frac{r}{p}}
 \left( \phi\(\xi + \frac{p}{\ep}\(t+pb(\theta,t,r)\)\) - \phi\( \xi + \frac{tp}{\ep}\)\right)\frac{dt}{ (t^2 +1)^{\frac{n+1}{2}}}.
\end{align*}
By the regularity of $\phi$ and recalling \eqref{bthetatrde}, we see that
\begin{align*}
 &\left|\int_0^\ep \frac{dp}{p^{2}} \int_{S^{n-2}} d \theta
	 \int_{-1}^1 \(\phi\(\xi + \frac{p}{\ep}\(t+pb(\theta,t,r)\)\) - \phi\( \xi + \frac{tp}{\ep}\)\)\,\frac{dt}{ (t^2 +1)^{\frac{n+1}{2}}} \right|\\
	 &\quad\leq \frac{C}{\ep} \int_0^\ep \, dp \int_{S^{n-2}} d \theta
	 \int_{-1}^1 |A(\theta)+O(r(1+t^2))| \frac{dt}{ (t^2 +1)^{\frac{n+1}{2}}}\\
	& \quad \leq  \frac{C}{\ep}\int_0^\ep \, dp=C
\end{align*}
and also
\begin{align*}
 &\left|\int_0^\ep \frac{dp}{p^{2}} \int_{S^{n-2}} d \theta
	 \int_{\{1< |t|<\frac{r}{p}\}} \( \phi\(\xi + \frac{p}{\ep}\(t+pb(\theta,r,t)\)\) - \phi\( \xi + \frac{tp}{\ep}\)\)\frac{dt}{ (t^2 +1)^{\frac{n+1}{2}}}\right|\\
	&\quad\leq  \frac{C}{\ep}\int_0^\ep \, dp \int_{S^{n-2}} d \theta
	 \int_{\{1<|t|<\frac{r}{p}\}} (1+rt^2) \frac{dt}{ |t|^{n+1}}\\
	 &\quad\leq  
	 \frac{C}{\ep}\int_0^\ep \, dp 
	 \left[ \int_{\{|t|>1\}} \frac{dt}{|t|^{n+1}}  + r \int_{\{1 < |t| < \frac{r}{p}\}}  \frac{dt}{ |t|}\right]\\
	 &\quad\leq  
	 \frac{C}{\ep}\int_0^\ep 
	 (1 + r \ln r - r \ln p)\,dp
	 \leq (1 + r | \ln \ep|).
\end{align*}
Together with \eqref{eq:aebarI}, we have 
\begin{equation}\label{eq:aebarI-bis}
\bar{a}_\ep(x)
= O\left(\frac{1}{r | \ln \ep|}\right) 
+ O(r) 
+ \frac{1}{| \ln \ep|} \int_{\R} I(\xi) \dot{\phi}(\xi) \, d \xi
\end{equation}
where
\begin{align*}
I(\xi) &= \int_\ep^r \frac{dp}{p^{2}} \int_{S^{n-2}} d \theta
	 \int_{-\frac{r}{p}}^{\frac{r}{p}}  
 \left( \phi\(\xi + \frac{p}{\ep}\(t+pb(\theta,t,r)\)\) - \phi\( \xi + \frac{tp}{\ep}\)\right)\,\frac{dt}{ (t^2 +1)^{\frac{n+1}{2}}}.
\end{align*}

\medskip

\noindent
\underline{\bf Heuristics.}
We will prove that one of the main contributions comes from values of $p$ between $\ep^\frac12$ and $r$ and values of $t$ such that, for $A(\theta)>0$, 
 \begin{equation}\label{tvalues1}
 -p A\left(\theta\right)<t<0,
 \end{equation}
 and for $A(\theta)<0$,
 \begin{equation}\label{tvalues2}
0<t< -p A\left(\theta\right).
 \end{equation}
 Indeed, by \eqref{eq:asymptotics for phi} if $A(\theta)>0$, for points $t$ as in \eqref{tvalues1}, the integrand function in $I(\xi)$ is close to 1 and thus
 \begin{align*}\frac{1}{|\ln\ep|}\int_\R d\xi\, \dot\phi(\xi)\int_{\ep^\frac12}^r\frac{dp}{p^2}\int_{-p A(\theta)}^0(\ldots)\,dt&\simeq \frac{1}{|\ln\ep|}\int_\R d\xi \,\dot\phi(\xi)\int_{\ep^\frac12}^r\frac{dp}{p^2}\int_{-p A(\theta)}^0dt\\&
 =\frac{A(\theta)}{|\ln\ep|}\int_\R d\xi\, \dot\phi(\xi)\int_{\ep^\frac12}^r\frac{dp}{p}\\&\simeq\frac{A(\theta)}{2},\end{align*}
 see \eqref{eq:J13} below. Similarly, if $A(\theta)<0$, for points  $t$ as in \eqref{tvalues2} the integrand function
   is close to $-1$ and 
   $$\frac{1}{|\ln(\ep)|}\int_\R d\xi\, \dot\phi(\xi)\int_{\ep^\frac12}^r\frac{dp}{p^2}\int_0^{-p A(\theta)}(\ldots)\,dt\simeq \frac{A(\theta)}{2}.$$
The other main contribution comes from values of $p$ between $\ep$ and $\ep^\frac12$ and values of $t$ between -1 and 1.
Indeed, we will show that  
\begin{align*}\frac{1}{|\ln\ep|}\int_\R d\xi\, \dot\phi(\xi)\int_{\ep}^{\ep^\frac{1}{2}}\frac{dp}{p^2}\int_{-1}^{1}(\ldots)
\,dt&\simeq \frac{A(\theta)}{|\ln\ep|}\int_\R d\xi\, \dot\phi(\xi)\int_\ep^{\ep^\frac12}\frac{dp}{p}=\frac{ A(\theta)}{2}, \end{align*}
see \eqref{J_1estimatefinal} and \eqref{J_5estimatefinal} below. 

To formally prove the estimates above, we start by splitting $I(\xi)$ as
 \begin{equation}
 \label{Ixisplitintbo}
 \begin{aligned}
 I(\xi)
 &=\int_{\ep}^r\frac{dp}{p^2}\int_{S^{n-2}}d\theta\int_{-1}^1(\ldots)\,dt
 	+\int_{\ep}^r\frac{dp}{p^2}\int_{S^{n-2}}d\theta\int_{\{1<|t|<\frac{r }{p}\}}(\ldots)\,dt\\
&=:I_1(\xi)+I_2(\xi),
\end{aligned}
  \end{equation}
 and then estimate $\frac{1}{|\ln \ep|} \int_{\R} \dot{\phi}(\xi) I_1(\xi) \, d \xi$ and $\frac{1}{|\ln \ep|} \int_{\R} \dot{\phi}(\xi) I_2(\xi) \, d \xi$ separately. 
 
 \medskip
 
 \noindent
\underline{\bf Step 1. Estimating $\frac{1}{|\ln \ep|} \int_{\R} \dot{\phi}(\xi) I_1(\xi) \, d \xi$}.
We will show that 
\begin{equation}\label{eq:I_1-final}
 \frac{1}{|\ln \ep|} \int_{\R} \dot{\phi}(\xi) I_1(\xi) \, d \xi
	=\int_{S^{n-2} }A(\theta)\,d\theta+o_\ep(1) + o_r(1),
\end{equation}
where $o_\ep(1)$ depends on the parameter  $r$. 

Note that if $|t|<1$, then for some $C_0>0$, 
\begin{equation}\label{1=t^2termysmall}
|b(\theta, r,t)-A(\theta)|\leq  C_0r. 
\end{equation}
Let  $r$ and $\delta$ be such that 
\begin{equation}\label{deltaC_0-ine}
C_0r\leq \delta<\frac12.
\end{equation} 
We write
\begin{equation}
\begin{aligned}
I_1(\xi)&=
\int_{S^{n-2}\cap\{ A(\theta)>3\delta\} }d\theta\,(\ldots)+\int_{S^{n-2}\cap\{ A(\theta)<-3\delta\} }d\theta\,(\ldots)+\int_{S^{n-2}\cap\{ |A(\theta)|\leq3\delta\} }d\theta\,(\ldots)\\&
 =:I_1^1(\xi)+I_1^2(\xi)+I_1^3(\xi).
\end{aligned}
 \label{Ixisplitint}
 \end{equation}
 Beginning with $I_1^1(\xi)$, we further split, for $R>1$ to be chosen,
 \begin{equation}\label{J1J2splitofI_1}\begin{split}
I_1^1(\xi)
 	&= \int_{S^{n-2}\cap\{ A(\theta)>3\delta\} }d\theta \int_{(R\ep)^{\frac{1}{2}}}^r dp\int_{-1}^1(\ldots)\,dt\\
	&\qquad+ \int_{S^{n-2}\cap\{ A(\theta)>3\delta\} }d\theta \int_{\ep}^{(R\ep)^{\frac{1}{2}}}dp\int_{-1}^1 (\ldots)\,dt\\
	&=: J_1(\xi) + J_2(\xi).
 \end{split}
 \end{equation}
 
 In what follows, we will use several times without reference that, recalling \eqref{1=t^2termysmall} and \eqref{deltaC_0-ine}, if $A(\theta)>3\delta$ and $|t|<1$, then $b(\theta,t,r)>0$ and  by the monotonicity of $\phi$, 
 $$\phi\(\xi + \frac{p}{\ep}\left(t+p b(\theta, t, r)\right)\) - \phi\( \xi + \frac{tp}{\ep}\)>0.$$
 
 \medskip
 
 \noindent
\underline{\bf Step 1a. Estimating $\frac{1}{|\ln \ep|} \int_{\R} \dot{\phi}(\xi) J_1(\xi) \, d \xi$}.
We will show that
\begin{equation}\label{J1}
\frac{1}{|\ln \ep|} \int_{\R} \dot{\phi}(\xi) J_1(\xi) \, d \xi
	= \frac12\int_{S^{n-2}\cap\{ A(\theta)>3\delta\} }A(\theta)\,d\theta+o_\ep(1)+O(\delta)+O(r).
\end{equation}
Begin by writing
\begin{align*}
J_1(\xi)
&=\int_{S^{n-2}\cap\{ A(\theta)>3\delta\} }d\theta \int_{(R\ep)^{\frac{1}{2}}}^r\frac{dp}{p^2}\int_{\delta p}^1(\ldots)\,dt\\&
\quad +\int_{S^{n-2}\cap\{ A(\theta)>3\delta\} }d\theta \int_{(R\ep)^{\frac{1}{2}}}^r\frac{dp}{p^2}\int_{-\delta p}^{\delta p}(\ldots)\,dt\\&
\quad +\int_{S^{n-2}\cap\{ A(\theta)>3\delta\} }d\theta\int_{(R\ep)^{\frac{1}{2}}}^r\frac{dp}{p^2}\int_{-p (A(\theta)-2\delta)} ^{-\delta p}(\ldots)\,dt\\&
\quad +\int_{S^{n-2}\cap\{ A(\theta)>3\delta\} }d\theta\int_{(R\ep)^{\frac{1}{2}}}^r \frac{dp}{p^2}\int_{-p (A(\theta)+2\delta)}^{-p (A(\theta)-2\delta)}(\ldots)\,dt\\&
 \quad+\int_{S^{n-2}\cap\{ A(\theta)>3\delta\} }d\theta\int_{(R\ep)^{\frac{1}{2}}}^r \frac{dp}{p^2}\int_{-1}^{-p (A(\theta)+2\delta)}(\ldots)\,dt\\
 &=: J_1^1(\xi) + J_1^2(\xi) + J_1^3(\xi) + J_1^4(\xi) + J_1^5(\xi). 
  \end{align*}
Notice that $ p (A(\theta)+2\delta)<1$ for $ p\leq r$ and $r$ small enough. 

The main contribution in $J_1(\xi)$ comes from $J_1^3(\xi)$. Indeed,  we first show that
\begin{equation}\label{eq:J13}
\frac{1}{|\ln \ep|} \int_{\R} \dot{\phi}(\xi) J_1^3(\xi) \, d \xi
	= \frac12\int_{S^{n-2}\cap\{ A(\theta)>3\delta\} }A(\theta)\,d\theta+o_\ep(1)+O(\delta)+O(r).
\end{equation}
Notice that,  for $R\delta>2$ and $\delta$ as in \eqref{deltaC_0-ine},  
if 
\[
\theta \in S^{n-2} \cap \{A(\theta)>3\delta\}, \quad (R\ep)^\frac12 \leq p\leq r, \quad -p(A(\theta)-2\delta) \leq t\leq -p \delta,\quad |\xi|\leq \frac{R\delta}{2},
\]
then
$$\xi + \frac{tp}{\ep}\leq \xi-\frac{\delta p^2}{\ep}\leq \xi-R\delta\leq -\frac{R\delta}{2}$$
and, recalling \eqref{1=t^2termysmall},
$$\xi + \frac{p}{\ep}(t+p b(\theta,t, r))\geq\xi + \frac{p}{\ep}(t+p A(\theta)-p C_0r)\geq\xi+\frac{\delta p^2}{\ep}\geq \xi+R\delta\geq \frac{R\delta}{2}.$$
Consequently, by \eqref{eq:asymptotics for phi},
\begin{align*}
& \phi\(\xi + \frac{p}{\ep}\left(t+p b(\theta,t, r)\right)\) - \phi\( \xi + \frac{tp}{\ep}\)\\&
=H\(\xi + \frac{p}{\ep}\left(t+p b(\theta,t, r)\right)\) -H\( \xi + \frac{tp}{\ep}\) +O\left(\frac{1}{\xi + \frac{p}{\ep}\left(t+p b(\theta, t,r)\right)}\right)+
O\left(\frac{1}{\xi + \frac{tp }{\ep}}\right)\\&
=1+O\(\frac{1}{R\delta}\). 
\end{align*}
 Therefore, 
\begin{equation} \label{eq:split-main-error}
\begin{aligned}
& \int_\R\dot\phi(\xi)J_1^3(\xi) \,d\xi \\
&= \int_{-\frac{R\delta}{2}}^{\frac{R\delta}{2}}d\xi \,\dot\phi(\xi)\int_{S^{n-2}\cap\{ A(\theta)>3\delta\} }d\theta \int_{(R\ep)^\frac12}^r\frac{dp}{p^2}\int_{-p (A(\theta)-2\delta)}^{-\delta p}\(1+O\(\frac{1}{R\delta}\)\)\frac{dt}{(t^2+1)^\frac{n+1}{2}}\\&
\quad+\int_{\{|\xi|>\frac{R\delta}{2}\}}d\xi \,\dot\phi(\xi) \int_{S^{n-2}\cap\{ A(\theta)>3\delta\} }d\theta \int_{(R\ep)^\frac12}^r\frac{dp}{p^2}\int_{-p (A(\theta)-2\delta)}^{-\delta p}(\ldots)\,dt.
  \end{aligned}
  \end{equation}
   The main contribution in $\frac{1}{|\ln\ep|}\int_\R\dot\phi(\xi)J_1^3(\xi)d\xi$ comes from the integral of 1 in \eqref{eq:split-main-error}. Indeed, 
since $|t|\leq p(A(\theta)-2\delta)\leq Cr$ implies
   $$\frac{1}{\left(t^2+1\right)^\frac{n+1}{2}}=1+O(r),$$ we can write
  \begin{align*}
& \frac{1}{|\ln\ep|}\ \int_{S^{n-2}\cap\{ A(\theta)>3\delta\} }d\theta \int_{(R\ep)^\frac12}^r\frac{dp}{p^2}\int_{-p(A(\theta)-2\delta)}^{-p\delta}\frac{dt}{\left(t^2+1\right)^\frac{n+1}{2}}\\&\quad
=(1+O(r))\frac{1}{|\ln\ep|}\ \int_{S^{n-2}\cap\{ A(\theta)>3\delta\} }d\theta \int_{(R\ep)^\frac12}^r\frac{1}{p^2}p(A(\theta)-3\delta)\,dp\\&\quad
=(1+O(r))\frac{\ln r-\frac12\ln R-\frac12\ln\ep}{|\ln\ep|}\int_{S^{n-2}\cap\{  A(\theta)>3\delta\}}(A(\theta)-3\delta)\,d\theta\\&\quad
=\frac12\int_{S^{n-2}\cap\{  A(\theta)>3\delta\}}A(\theta)\,d\theta+o_\ep(1)+O(\delta)+O(r).
   \end{align*}
With this and recalling \eqref{phi-infyinftyaeptermxiestbis}, we infer that
\begin{equation}\label{maincontr12appendix} 
\begin{split}
&\frac{1}{|\ln\ep|} \int_{-\frac{R\delta}{2}}^{\frac{R\delta}{2}}d\xi\,\dot\phi(\xi)  \int_{S^{n-2}\cap\{  A(\theta)>3\delta\} }d\theta \int_{(R\ep)^\frac12}^r\frac{dp}{p^2}\int_{-p(A(\theta)-2\delta)}^{-p\delta}
\frac{dt}{\left(t^2+1\right)^\frac{n+1}{2}}
\\&\quad=\frac12\int_{S^{n-2}\cap\{ A(\theta)>3\delta\}}A(\theta)\,d\theta+o_\ep(1)+O(\delta)+O(r)+O\(\frac{1}{R\delta}\).
\end{split}
\end{equation}
Next, we look at the error terms in \eqref{eq:split-main-error}. First, note that
\begin{align*}
&\frac{1}{|\ln\ep|} \int_{S^{n-2}\cap\{  A(\theta)>3\delta\} }d\theta \int_{(R\ep)^\frac12}^r\frac{dp}{p^2}\int_{-p (A(\theta)-2\delta)}^{-\delta p}\frac{dt}{\left(t^2+1\right)^\frac{n+1}{2}}\\&\quad
\leq\frac{1}{|\ln\ep|} \int_{S^{n-2}\cap\{  A(\theta)>3\delta\} }d\theta \int_{(R\ep)^\frac12}^r\frac{1}{p^2}p( A(\theta)-3\delta)\,dp\\&\quad
\leq \frac{1}{|\ln\ep|}  \int_{S^{n-2}\cap\{  A(\theta)>3\delta\} }d\theta \int_{(R\ep)^\frac12}^r\frac{dp}{p}\\&\quad
\leq   C\frac{\ln r-\frac12\ln R-\frac12\ln \ep}{|\ln\ep|}
\leq C.
\end{align*}
With this, we estimate
\begin{equation}\label{J3error1} \begin{split}
&\left|\frac{1}{|\ln\ep|}  \int_{-\frac{R\delta}{2}}^{\frac{R\delta}{2}}d\xi \dot\phi(\xi) \int_{S^{n-2}\cap\{  A(\theta)>3\delta\} }d\theta \int_{(R\ep)^\frac12}^r\frac{dp}{p^2}\int_{-p (A(\theta)-2\delta)}^{-\delta p}O\(\frac{1}{R\delta}\)\frac{dt}{\left(t^2+1\right)^\frac{n+1}{2}}\right|\\&\leq O\(\frac{1}{R\delta}\),
\end{split}\end{equation}
and similarly, using that $0 < \phi < 1$ and \eqref{phi-infyinftyaeptermxiest}, 
\begin{equation}\label{J3error2}
\begin{aligned}0&\leq\frac{1}{|\ln\ep|}\int_{\{|\xi|>\frac{R\delta}{2}\}}d\xi\,\dot\phi(\xi) \int_{S^{n-2}\cap\{  A(\theta)>3\delta\} }d\theta \int_{(R\ep)^\frac12}^r\frac{dp}{p^2}\int_{-p (A(\theta)-2\delta)}^{-\delta p}(\ldots)\,dt\\&
\leq \frac{2}{|\ln\ep|}\int_{\{|\xi|>\frac{R\delta}{2}\}}d\xi\,\dot\phi(\xi)\int_{S^{n-2}\cap\{  A(\theta)>3\delta\} }d\theta \int_{(R\ep)^\frac12}^r\frac{dp}{p^2}\int_{-p (A(\theta)-2\delta)}^{-\delta p}\frac{dt}{\left(t^2+1\right)^\frac{n+1}{2}}
\\&
\leq  O\(\frac{1}{R\delta}\).
\end{aligned}
\end{equation}
Choosing 
\begin{equation}\label{eq:chooseR}
R=\delta^{-2},
\end{equation}
 from \eqref{eq:split-main-error}, 
 \eqref{maincontr12appendix}, \eqref{J3error1} and \eqref{J3error2}, estimate \eqref{eq:J13} follows. 
 
We next show that
\begin{equation}\label{eq:J1k}
\frac{1}{|\ln \ep|} \int_{\R} \dot{\phi}(\xi) J_1^k(\xi) \, d\xi = O(\delta) \	\quad \hbox{for}~k=1,2,4,5. 
\end{equation}
First, using that $0<\phi<1$, we get 
\begin{align*}
0&\leq \int_{\R}\dot\phi(\xi)J_1^2(\xi)\,d\xi
\leq 2\int_\R d\xi\,\dot\phi(\xi)\int_{S^{n-2}\cap\{  A(\theta)>3\delta\}}d\theta \int_{(R\ep)^\frac12}^r\frac{dp}{p^2}\int_{-\delta p} ^{\delta p }dt\\&
 \leq  2 \int_\R d\xi\,\dot\phi(\xi) \int_{(R\ep)^\frac12}^r\frac{dp}{p^2}2\delta p
\leq  C\delta|\ln\ep|,
\end{align*}
from which it follows that \eqref{eq:J1k} holds for $k=2$. 
The estimate for $k=4$ is similar. 

Regarding \eqref{eq:J1k} for $k=5$, we use \eqref{1=t^2termysmall} and the monotonicity of $\phi$ to estimate 
\begin{align*}
0&\leq\int_{S^{n-2}\cap\{A(\theta)>3\delta\} }d\theta \int_{(R\ep)^\frac12}^r\frac{dp}{p^2}\int_{-1}^{-p (A(\theta)+2\delta)}(\ldots)\,dt\\&
\leq\int_{S^{n-2}\cap\{A(\theta)>3\delta\} }d\theta \int_{(R\ep)^\frac12}^r\frac{dp}{p^2}\\&\qquad\times\int_{-1}^{-p (A(\theta)+2\delta)}
\left \{ \phi\(\xi + \frac{tp}{\ep}+\frac{p^2}{\ep}(A(\theta)+C_0r)\) - \phi\( \xi + \frac{tp}{\ep}\)\right\} dt\\&
 =\int_{S^{n-2}\cap\{A(\theta)>3\delta\} } d\theta  \int_{(R\ep)^\frac12}^r\frac{dp}{p^2}\int_{-1}^{-p(A(\theta)+2\delta)}dt\\&\qquad\times\int_0^1
 \dot\phi\(\xi + \frac{tp}{\ep}+\tau \frac{p^2}{\ep}(A(\theta)+C_0r) \)\frac{p^2(A(\theta)+C_0r)}{\ep}d\tau\\&
\leq \frac{C}{\ep}\int_{S^{n-2}\cap\{A(\theta)>3\delta\} } d\theta  \int_{(R\ep)^\frac12}^rdp\int_0^1d\tau \\&\qquad\times\int_{-1}^{-p(A(\theta)+2\delta)}\partial_t\left[
\phi\(\xi +\frac{tp}{\ep}+\tau \frac{p^2}{\ep}(A(\theta)+C_0r)\)\right]\frac{\ep}{p}\,dt\\&
 =C\int_{S^{n-2}\cap\{A(\theta)>3\delta\} }d\theta  \int_{(R\ep)^\frac12}^r \frac{dp}{p}\int_0^1\left\{ \phi\(\xi +\frac{p^2}{\ep}[ -A(\theta)-2\delta+\tau (A(\theta)+C_0r)]\)\right.\\&
\hspace{2.75in} \left. -\phi\(\xi -\frac{p}{\ep}+\tau\frac{p^2}{\ep}(A(\theta)+C_0r)\)\right\}d\tau. 
\end{align*}
Now, if
$$A(\theta)\geq 0, \quad (R\ep)^\frac{1}{2}\leq p\leq r, \quad 0\leq \tau\leq 1,\quad |\xi|\leq \frac{R\delta }{2},$$ 
and $\delta$ as in \eqref{deltaC_0-ine}, then
\begin{align*}\xi +\frac{p^2}{\ep}[ -A(\theta)-2\delta+\tau (A(\theta)+C_0r)]&\leq \xi-\frac{\delta p^2}{\ep}
\leq\frac{R\delta}{2} -R\delta
= -\frac{R\delta }{2}, \end{align*}
and  for $r$ and $\ep$  sufficiently small,
$$\xi  -\frac{p}{\ep}+\tau\frac{p^2}{\ep}(A(\theta)+C_0r)\leq \xi-\frac{p}{2\ep}\leq  -\frac{R\delta }{2}.$$
Recalling \eqref{eq:chooseR}, by \eqref{eq:asymptotics for phi}, we get
\[
\phi\(\xi +\frac{p^2}{\ep}[ -A(\theta)-2\delta+\tau (A(\theta)+C_0r)]\),\, \phi\(\xi -\frac{p}{\ep}+\tau\frac{p^2}{\ep}(A(\theta)+C_0r)\)\leq C\delta.
\]
The computations above yield 
\begin{align*}
&0\leq \int_{-\frac{R\delta}{2}}^{\frac{R\delta}{2}}
 d\xi\,\dot\phi(\xi)\int_{S^{n-2}\cap\{ A(\theta)>3\delta\} }d\theta \int_{(R\ep)^\frac12}^r\frac{dp}{p^2}\int_{-1}^{-p(A(\theta)+2\delta)}(\ldots)\,dt
\leq C\delta|\ln\ep|. 
\end{align*}
On the other hand, estimating as above and by \eqref{phi-infyinftyaeptermxiest} and \eqref{eq:chooseR},
\begin{align*}
0&\leq \int_{\{|\xi|>\frac{R\delta}{2}\}}d\xi\,\dot\phi(\xi)\int_{S^{n-2}\cap\{ A(\theta)>3\delta\} }d\theta \int_{(R\ep)^\frac12}^r\frac{dp}{p^2}\int_{-1}^{-p(A(\theta)+2\delta)}
(\ldots)\,dt
\\&\leq C\int_{\{|\xi|>\frac{R\delta}{2}\}}d\xi\,\dot\phi(\xi)\int_{S^{n-2}}d\theta \int_{(R\ep)^\frac{1}{2}}^r\frac{dp}{p}\int_0^1 d\tau\\&
 \leq C\delta|\ln\ep|.
\end{align*}
Together, we arrive at \eqref{eq:J1k} for $k=5$.
Similar computations yield \eqref{eq:J1k} for $k=1$. 

Combining \eqref{eq:J13} and \eqref{eq:J1k}, we obtain \eqref{J1}. 

 \medskip
 
 \noindent
\underline{\bf Step 1b. Estimating $\frac{1}{|\ln \ep|} \int_{\R} \dot{\phi}(\xi) J_2(\xi) \, d \xi$}.
We will show that
\begin{equation}\label{J2}
\frac{1}{|\ln \ep|} \int_{\R} \dot{\phi}(\xi) J_2(\xi) \, d \xi
	= \frac12\int_{S^{n-2}\cap\{ A(\theta)>3\delta\} }A(\theta)\,d\theta+o_\ep(1)+O(\delta)+O(r).
\end{equation}
We first write
\begin{align*}
J_2(\xi)
&=\int_{S^{n-2}\cap\{ A(\theta)>3\delta\} }d\theta \int_\ep^{(R\ep)^{\frac{1}{2}}}\frac{dp}{p^2}\int_{\delta \frac{\ep}{p}}^1(\ldots)\,dt\\&
\quad +\int_{S^{n-2}\cap\{ A(\theta)>3\delta\} }d\theta \int_\ep^{(R\ep)^{\frac{1}{2}}}\frac{dp}{p^2}\int_{-\delta \frac{\ep}{p}}^{\delta \frac{\ep}{p}}(\ldots)\,dt\\&
 \quad+\int_{S^{n-2}\cap\{ A(\theta)>3\delta\} }d\theta\int_\ep^{(R\ep)^{\frac{1}{2}}}\frac{dp}{p^2}\int_{-1}^{-\delta \frac{\ep}{p}}(\ldots)\,dt\\
 &=: J_2^1(\xi) + J_2^2(\xi) + J_2^3(\xi).  
  \end{align*}
Here the main contribution comes from $J_2^1$ and $J_2^3$. Indeed, we will show 
\begin{equation}\label{J_1estimatefinal}
\frac{1}{|\ln\ep|}\int_{\R}\dot\phi(\xi)J_2^1(\xi)\,d\xi=\frac14\int_{S^{n-2}\cap\{ A(\theta)>3\delta\} }A(\theta)\,d\theta+o_\ep(1)+O(r)+O(\delta), 
\end{equation}
and 
\begin{equation}\label{J_5estimatefinal}
\frac{1}{|\ln\ep|}\int_{\R}\dot\phi(\xi)J_2^3(\xi)\,d\xi=\frac14\int_{S^{n-2}\cap\{ A(\theta)>3\delta\} }A(\theta)\,d\theta+o_\ep(1)+O(r)+O(\delta). 
\end{equation}
Beginning with $J_2^3$, we split, for $R_0\geq K> 4$ to be chosen, 
\begin{equation}\label{eq:J1split-shortrho}\begin{split}
J_2^3(\xi)& = \int_{S^{n-2}\cap\{ A(\theta)>3\delta\} }d\theta \int_{\ep}^{R_0\ep}\frac{dp}{p^2}\int_{-1}^{-\delta \frac{ \ep}{p}} (\ldots)\,dt\\&
\quad+ \int_{S^{n-2}\cap\{ A(\theta)>3\delta\} }d\theta \int_{R_0\ep}^{(R\ep)^\frac12}\frac{dp}{p^2}\int_{-1}^{-K\frac{\ep}{p}}(\ldots)\,dt\\&
\quad+ \int_{S^{n-2}\cap\{ A(\theta)>3\delta\} }d\theta \int_{R_0\ep}^{(R\ep)^\frac12}\frac{dp}{p^2}\int_{-K\frac{\ep}{p}}^{-\delta\frac{\ep}{p}}(\ldots)\,dt.
\end{split}
\end{equation}
Regarding the bounds of integration over $t$, note that $K\frac{\ep}{p}\leq 1$ if $K\le R_0$ and $p\geq R_0\ep$. 
For the  first integral on the right-hand side of \eqref{eq:J1split-shortrho},  we estimate 
\begin{align*}
0&\leq \int_{S^{n-2}\cap\{ A(\theta)>3\delta\} }d\theta \int_{\ep}^{R_0\ep}\frac{dp}{p^2}\\& \qquad \times\int_{-1}^{-\delta \frac{\ep}{p}} 
\left\{ \phi\(\xi + \frac{p}{\ep}\left(t+p b(\theta,t, r)\right)\) - \phi\( \xi + \frac{tp}{\ep}\)\right\} \frac{dt}{\left(t^2+1\right)^\frac{n+1}{2}}\\&
\leq C\int_{S^{n-2}\cap\{ A(\theta)>3\delta\} }d\theta \int_{\ep}^{R_0\ep}\frac{dp}{p^2}\int_{-1}^{-\delta \frac{ \ep}{p}} \frac{p^2}{\ep}b(\theta,t,r)\,dt\\&
\leq \frac{C}{\ep}\int_{\ep}^{R_0\ep}\, dp\leq CR_0.
\end{align*}
It follows that
\begin{equation}\label{eq:J1split-shortrho:term1}
0\leq \frac{1}{|\ln\ep|}\int_\R d\xi\,\dot\phi(\xi)\int_{S^{n-2}\cap\{ A(\theta)>3\delta\} }d\theta \int_{\ep}^{R_0\ep}\frac{dp}{p^2}\int_{-1}^{-\delta \frac{ \ep}{p}} (\ldots)\,dt\leq\frac{CR_0}{|\ln\ep|}.
\end{equation}
For the second integral in \eqref{eq:J1split-shortrho}, by  \eqref{1=t^2termysmall} and the monotonicity of $\phi$,  we have
\begin{align*}
0&
\leq \int_{S^{n-2}\cap\{ A(\theta)>3\delta\} }d\theta \int_{R_0\ep}^{(R\ep)^\frac12}\frac{dp}{p^2}\\&\qquad\times\int_{-1}^{-K\frac{\ep}{p}}
\left \{ \phi\(\xi + \frac{p}{\ep}\left(t+p b(\theta,t, r)\right)\) - \phi\( \xi + \frac{tp}{\ep}\)\right\} \frac{dt}{\left(t^2+1\right)^\frac{n+1}{2}}\\&
\leq \int_{S^{n-2}\cap\{ A(\theta)>3\delta\} }d\theta \int_{R_0\ep}^{(R\ep)^\frac12}\frac{dp}{p^2}\\&\qquad\times\int_{-1}^{-K\frac{\ep}{p}}\left \{ \phi\(\xi + \frac{tp }{\ep}+\frac{p^2}{\ep}(A(\theta)+C_0r)\) - \phi\( \xi + \frac{tp}{\ep}\)\right\} \frac{dt}{\left(t^2+1\right)^\frac{n+1}{2}}\\&
 \leq\int_{S^{n-2}\cap\{ A(\theta)>3\delta\} }d\theta \int_{R_0\ep}^{(R\ep)^\frac12}\frac{dp}{p^2}\int_{-1}^{-K\frac{\ep}{p}}dt\\&\qquad\times \int_0^1
 \dot\phi\(\xi  +\frac{tp }{\ep}+\tau\frac{p^2}{\ep}(A(\theta)+C_0r)\) 
\frac{p^2}{\ep}(A(\theta)+C_0r)\,d\tau
\\&
\leq\frac{C}{\ep} \int_{S^{n-2}}d\theta \int_{R_0\ep}^{(R\ep)^\frac{1}{2}}dp\int_0^1 d\tau\int_{-1}^{-K\frac{\ep}{p}}\partial_t
\left[\phi\(\xi + \frac{p t}{\ep}+\tau\frac{p^2}{\ep}(A(\theta)+C_0r)\)\right]\frac{\ep}{p}\,dt\\&
 =C\int_{S^{n-2}}d\theta \int_{R_0\ep}^{(R\ep)^\frac{1}{2}}\frac{dp}{p}\\&\qquad\times\int_0^1 \left\{\phi\(\xi -K+\tau\frac{p^2}{\ep}(A(\theta)+C_0r)\) -\phi\(\xi -\frac{p}{\ep}+\tau\frac{p^2}{\ep}(A(\theta)+C_0r)\)\right\}d\tau.
\end{align*}
If 
$$R_0\ep\leq p\leq (R\ep)^\frac{1}{2}, \quad 0\leq \tau\leq 1,\quad |\xi|\leq \frac{K}{2},$$ then we have that,  for some $\tilde C\geq1$,
$$\xi-K+\tau\frac{p^2}{\ep}(A(\theta)+C_0r)\leq   -\frac{K}{2}+ \tilde CR,$$
$$\xi-\frac{p}{\ep}+\tau\frac{p^2}{\ep}(A(\theta)+C_0r)\leq 
\frac{K}{2} -R_0+ \tilde CR. $$
 Let $R_0$ and $K$ be such that 
\begin{equation}\label{R0Kdefstep1aii}  4\leq 4 \tilde C R= K\leq R_0.\end{equation}
Then, 
\[
\xi-K+\tau\frac{p^2}{\ep}(A(\theta)+C_0r),\, \,\xi-\frac{p}{\ep}+\tau\frac{p^2}{\ep}(A(\theta)+C_0r)\leq -\frac{K}{4}
\]
and by  \eqref{eq:asymptotics for phi}, 
$$\phi\(\xi -K+\tau\frac{p^2}{\ep}(A(\theta)+C_0r)\),\,\phi\(\xi -\frac{p}{\ep}+\tau\frac{p^2}{\ep}(A(\theta)+C_0r)\)\leq \frac{C}{K}.$$
This implies that, for $|\xi|\leq K/2$, 
\begin{align*}
&\int_{S^{n-2}}d\theta \int_{R_0\ep}^{(R\ep)^\frac{1}{2}}\frac{dp}{p}\int_0^1 \left\{\phi\(\xi -K
\frac{p^2}{\ep}(A(\theta)+C_0r)\)-
\phi\(\xi -\frac{p}{\ep}+\frac{p^2}{\ep}(A(\theta)+C_0r)\)\right\}d\tau\\&
\quad \leq \frac{C}{K} \int_{R_0\ep}^{(R\ep)^\frac{1}{2}}\frac{dp}{p}\
 \leq  \frac{C|\ln\ep|}{K}. 
\end{align*}
The computations above yield 
\begin{align*}
0&\leq \int_{-\frac{K}{2}}^{\frac{K}{2}} d\xi\,\dot\phi(\xi)\int_{S^{n-2}\cap\{ A(\theta)>3\delta\} }d\theta \int_{R_0\ep}^{(R\ep)^\frac12}\frac{dp}{p^2}\int_{-1}^{-K\frac{\ep}{p}}(\ldots)\,dt
\leq \frac{C|\ln\ep|}{K}. 
\end{align*}
On the other hand, estimating as above but using that $0 < \phi < 1$ and \eqref{phi-infyinftyaeptermxiest}, we obtain
\begin{align*}
0&\leq \int_{\{|\xi|>\frac{K}{2}\}}d\xi\,\dot\phi(\xi)\int_{S^{n-2}\cap\{ A(\theta)>3\delta\} }d\theta \int_{R_0\ep}^{(R\ep)^\frac12}\frac{dp}{p^2}\int_{-1}^{-K\frac{\ep}{p}}
(\ldots)\,dt
\\&\leq C\int_{\{|\xi|>\frac{K}{2}\}}d\xi\,\dot\phi(\xi)\int_{S^{n-2}}d\theta \int_{R_0\ep}^{(R\ep)^\frac{1}{2}}\frac{dp}{p}\int_0^1 d\tau
\leq \frac{C|\ln\ep|}{K}.
\end{align*}
We conclude that 
\begin{equation}\label{eq:J1split-shortrho:term2}
0\leq \frac{1}{|\ln\ep|}\int_\R d\xi\,\dot\phi(\xi)\int_{S^{n-2}\cap\{ A(\theta)>3\delta\} }d\theta \int_{R_0\ep}^{(R\ep)^\frac12}\frac{dp}{p^2}\int_{-1}^{-K\frac{p}{\ep}}(\ldots)\,dt\leq\frac{C}{K}.
\end{equation}
We next estimate the third term on the right-hand side of \eqref{eq:J1split-shortrho}. We first notice that, if $|t|\leq K\frac{\ep}{p}\leq1$ and $p\geq R_0\ep$, then 

$$\frac{1}{(t^2+1)^\frac{n+1}{2}}=1+O\left(\frac{K}{R_0}\right).$$
We set 
\begin{equation}\label{R0defstep1aii}R_0=K^2,\end{equation}
then
\begin{align*}
&\int_{S^{n-2}\cap\{ A(\theta)>3\delta\} }d\theta \int_{R_0\ep}^{(R\ep)^\frac12}\frac{dp}{p^2}\int_{-K\frac{\ep}{p}}^{-\delta\frac{\ep}{p}}
\left \{ \phi\(\xi + \frac{p}{\ep}\left(t+p b(\theta,t, r)\right)\) - \phi\( \xi + \frac{tp}{\ep}\)\right\} \frac{dt}{\left(t^2+1\right)^\frac{n+1}{2}}\\&
\quad=\int_{S^{n-2}\cap\{ A(\theta)>3\delta\} }d\theta \int_{R_0\ep}^{(R\ep)^\frac12}\frac{dp}{p^2}\int_{-K\frac{\ep}{p}}^{-\delta\frac{\ep}{p}}
\(\ldots\)\left(1+ O(K^{-1})\right)dt.
\end{align*}
Using again \eqref{1=t^2termysmall} and the monotonicity of $\phi$, we get
\begin{align*}
&\int_{S^{n-2}\cap\{ A(\theta)>3\delta\} }d\theta \int_{R_0\ep}^{(R\ep)^\frac12}\frac{dp}{p^2}\int_{-K\frac{\ep}{p}}^{-\delta\frac{\ep}{p}}
\left \{ \phi\(\xi + \frac{p}{\ep}\left(t+p b(\theta,t, r)\right)\) - \phi\( \xi + \frac{tp}{\ep}\)\right\} dt\\&
\quad\leq \int_{S^{n-2}\cap\{ A(\theta)>3\delta\} }d\theta \int_{R_0\ep}^{(R\ep)^\frac12}\frac{dp}{p^2}\int_{-K\frac{\ep}{p}}^{-\delta\frac{\ep}{p}}
\left \{ \phi\(\xi + \frac{tp}{\ep}+\frac{p^2}{\ep}(A(\theta)+C_0 r)\) - \phi\( \xi + \frac{tp}{\ep}\)\right\} dt\\&
\quad=\int_{S^{n-2}\cap\{ A(\theta)>3\delta\} }d\theta \int_{R_0\ep}^{(R\ep)^\frac12}\frac{dp}{p^2}\\&\qquad\quad\times\int_{-K\frac{\ep}{p}}^{-\delta\frac{\ep}{p}}dt\int_0^1
\dot\phi\(\xi + \frac{tp}{\ep}+\tau\frac{p^2}{\ep}(A(\theta)+C_0 r)\) 
\frac{p^2}{\ep}(A(\theta)+C_0 r)\,d\tau\\&
\quad=\int_{S^{n-2}\cap\{ A(\theta)>3\delta\} }d\theta \int_{R_0\ep}^{(R\ep)^\frac12}\frac{dp}{p}\int_0^1 d\tau\\&
\quad\qquad\times\int_{-K\frac{\ep}{p}}^{-\delta\frac{\ep}{p}}\partial_t\left[\phi\(\xi + \frac{tp}{\ep}+\tau\frac{p^2}{\ep}(A(\theta)+C_0 r)\)\right](A(\theta)+C_0r)\,dt\\&
\quad= \int_{S^{n-2}\cap\{ A(\theta)>3\delta\} }d\theta \int_{R_0\ep}^{(R\ep)^\frac12}\frac{dp}{p}\int_0^1 d\tau\int_{-K\frac{\ep}{p}}^{-\delta\frac{\ep}{p}}
\partial_t\left[\phi\(\xi + \frac{tp}{\ep}+\tau\frac{p^2}{\ep}(A(\theta)+C_0 r)\)\right]A(\theta)\,dt\\&
\quad\qquad +O(r|\ln\ep|). 
\end{align*}
Similarly, 
\begin{align*}
&\int_{S^{n-2}\cap\{ A(\theta)>3\delta\} }d\theta \int_{R_0\ep}^{(R\ep)^\frac12}\frac{dp}{p^2}\int_{-K\frac{\ep}{p}}^{-\delta\frac{\ep}{p}}
\left \{ \phi\(\xi + \frac{p}{\ep}\left(t+p b(\theta,t, r)\right)\) - \phi\( \xi + \frac{tp}{\ep}\)\right\} dt\\&
\quad\geq 
\int_{S^{n-2}\cap\{ A(\theta)>3\delta\} }d\theta \int_{R_0\ep}^{(R\ep)^\frac12}\frac{dp}{p}\int_0^1 d\tau\int_{-K\frac{\ep}{p}}^{-\delta\frac{\ep}{p}}
\partial_t\left[\phi\(\xi + \frac{tp}{\ep}+\tau\frac{p^2}{\ep}(A(\theta)-C_0 r)\)\right]A(\theta)\,dt \\&
\quad\quad+O(r|\ln\ep|). 
\end{align*}
For the main terms in the integrals above, we 
write
\begin{align*}
&\int_{S^{n-2}\cap\{ A(\theta)>3\delta\} }d\theta  A(\theta)\int_{R_0\ep}^{(R\ep)^\frac12}\frac{dp}{p}\int_0^1 d\tau\int_{-K\frac{\ep}{p}}^{-\delta\frac{\ep}{p}}
\partial_t\left[\phi\(\xi +\frac{tp}{\ep}+\tau\frac{p^2}{\ep}(A(\theta)\pm C_0 r)\)\right]dt\\&
\quad=\int_{S^{n-2}\cap\{ A(\theta)>3\delta\} }d\theta A(\theta)\int_{R_0\ep}^{(R\ep)^\frac12}\frac{dp}{p}\\&
\qquad\times\int_0^1 \left\{\phi\(\xi  -\delta +\tau\frac{p^2}{\ep}(A(\theta)\pm C_0 r)\)-\phi\(\xi  -K+\tau\frac{p^2}{\ep}(A(\theta)\pm C_0 r)\)\right\}d\tau\\&
\quad =\int_{S^{n-2}\cap\{ A(\theta)>3\delta\} }d\theta A(\theta) \int_{R_0\ep}^{(R\ep)^\frac12}
\left\{\phi\(\xi  -\delta\)-\phi\(\xi  -K\)+O\left(\frac{ p^2}{\ep}\right)\right\}\frac{dp}{p}. 
\end{align*}
As above, we find that
$$\int_\R d\xi\,\dot\phi(\xi)\int_{S^{n-2}\cap\{ A(\theta)>3\delta\} }d\theta A(\theta) \int_{R_0\ep}^{(R\ep)^\frac12}\phi\(\xi  -K\)\frac{dp}{p}\leq \frac{C|\ln\ep|}{K}. 
$$
Moreover, regarding the error term, 
$$\int_\R d\xi\,\dot\phi(\xi)\int_{S^{n-2}\cap\{ A(\theta)>3\delta\} }d\theta  A(\theta)\int_{R_0\ep}^{(R\ep)^\frac12}\frac{dp}{p}\frac{p^2}{\ep}\leq CR. 
$$
The main contribution comes from the following integral 
\begin{align*}
&\int_\R d\xi\,\dot\phi(\xi)\int_{S^{n-2}\cap\{ A(\theta)>3\delta\} }d\theta\, A(\theta) \int_{R_0\ep}^{(R\ep)^\frac12}
\phi\(\xi  -\delta\)\frac{dp}{p}\\&
\quad =\int_\R d\xi\,\dot\phi(\xi)\int_{S^{n-2}\cap\{ A(\theta)>3\delta\} }d\theta\,  A(\theta) \int_{R_0\ep}^{(R\ep)^\frac12}
\phi\(\xi\)\frac{dp}{p}+O(\delta|\ln\ep|)\\&
\quad=\int_\R \frac12\frac{d}{d\xi}\left(\phi^2(\xi)\right)d\xi\int_{S^{n-2}\cap\{ A(\theta)>3\delta\} }d\theta\, A(\theta) \int_{R_0\ep}^{(R\ep)^\frac12}\frac{dp}{p}+O(\delta|\ln\ep|)\\&
\quad=\frac12\(\frac12|\ln\ep|+\frac12\ln R-\ln R_0\)\int_{S^{n-2}\cap\{ A(\theta)>3\delta\} } A(\theta)\,d\theta+O(\delta|\ln\ep|),
\end{align*}
where we used that $\phi(\infty)=1$ and $\phi(-\infty)=0$.
Putting it all together, we get 
\begin{equation}\label{eq:J1split-shortrho:term3}
\begin{split}
&\frac{1}{|\ln\ep|}\int_\R d\xi\,\dot\phi(\xi)\int_{S^{n-2}\cap\{ A(\theta)>3\delta\} }d\theta \int_{R_0\ep}^{(R\ep)^\frac12}\frac{dp}{p^2}\int_{-K\frac{\ep}{p}}^{-\delta\frac{\ep}{p}}(\ldots)\,dt\\&\quad=\frac14\int_{S^{n-2}\cap\{ A(\theta)>3\delta\} }A(\theta)\,d\theta+o_\ep(1)+O(r)+O(K^{-1})+O(\delta). 
\end{split}\end{equation}
Recalling \eqref{eq:chooseR}, \eqref{R0Kdefstep1aii} and \eqref{R0defstep1aii}, from 
\eqref{eq:J1split-shortrho:term1}, \eqref{eq:J1split-shortrho:term2} and \eqref{eq:J1split-shortrho:term3}, 
we get \eqref{J_5estimatefinal}. 

We now check that the estimate for $J_2^1$ in \eqref{J_1estimatefinal} holds. For $R_0,\,K$ as in \eqref{R0Kdefstep1aii} and \eqref{R0defstep1aii}, we write
\begin{equation}\label{eq:J5split-shortrho}\begin{split}
J_2^1(\xi)
 &= \int_{S^{n-2}\cap\{ A(\theta)>3\delta\} }d\theta \int_{\ep}^{R_0\ep}\frac{dp}{p^2}\int_{\delta \frac{\ep}{p}}^{1}(\ldots)\,dt\\&
 \quad+ \int_{S^{n-2}\cap\{ A(\theta)>3\delta\} }d\theta \int_{R_0\ep}^{(R\ep)^\frac12}\frac{dp}{p^2}\int_{\delta\frac{\ep}{p}}^{K\frac{\ep}{p}}(\ldots)\,dt\\&
\quad+ \int_{S^{n-2}\cap\{ A(\theta)>3\delta\} }d\theta \int_{R_0\ep}^{(R\ep)^\frac12}\frac{dp}{p^2}\int_{K\frac{\ep}{p}}^{1}(\ldots)\,dt.
\end{split}
\end{equation}
Similar computations as for the estimates \eqref{eq:J1split-shortrho:term1} and \eqref{eq:J1split-shortrho:term2}  yield 
\begin{equation}\label{eq:J5split-shortrho:term1}
0\leq\frac{1}{|\ln\ep|}\int_\R d\xi\dot\phi(\xi)\int_{S^{n-2}\cap\{ A(\theta)>3\delta\} }d\theta \int_{\ep}^{R_0\ep}\frac{dp}{p^2}\int_{\delta \frac{\ep}{p}}^{1}(\ldots)\,dt\leq\frac{CR_0}{|\ln\ep|},
\end{equation}
and 
\begin{equation}\label{eq:J5split-shortrho:term2}
0\leq\frac{1}{|\ln\ep|}\int_\R d\xi\dot\phi(\xi)\int_{S^{n-2}\cap\{ A(\theta)>3\delta\} }d\theta \int_{R_0\ep}^{(R\ep)^\frac12}\frac{dp}{p^2}\int_{K\frac{p}{\ep}}^1(\ldots)\,dt\leq\frac{C}{K}. 
\end{equation}
The  second term on the right-hand side of \eqref{eq:J5split-shortrho} is similar to \eqref{eq:J1split-shortrho:term3}. Indeed, as above,
\begin{equation}\label{J5thirdtermfirstest}\begin{split}
& \int_{S^{n-2}\cap\{ A(\theta)>3\delta\} }d\theta \int_{R_0\ep}^{(R\ep)^\frac12}\frac{dp}{p^2}\int_{\delta\frac{\ep}{p}}^{K\frac{\ep}{p}}(\ldots)\,dt\\&
\quad=\int_{S^{n-2}\cap\{ A(\theta)>3\delta\} }d\theta A(\theta) \int_{R_0\ep}^{(R\ep)^\frac12}
\left\{\phi\(\xi  +K\)-\phi\(\xi  +\delta\)\right\}\frac{dp}{p}\\
&\qquad+O(R)+O(K^{-1}|\ln\ep|)+O(r|\ln\ep|). 
\end{split}
\end{equation}
By  \eqref{eq:asymptotics for phi}, for $|\xi|\leq \frac{K}{2}$,
$$\phi\(\xi  +K\)=1+O\left(\frac{1}{K}\right).$$
Therefore, 
\begin{align*}
&\int_{-\frac{K}{2}}^\frac{K}{2}d\xi\,\dot\phi(\xi)\int_{S^{n-2}\cap\{ A(\theta)>3\delta\} }d\theta \int_{R_0\ep}^{(R\ep)^\frac12}\frac{dp}{p^2}\int_{\delta\frac{\ep}{p}}^{K\frac{\ep}{p}}(\ldots)\,dt\\&
\quad = \int_{-\frac{K}{2}}^\frac{K}{2}d\xi\,\dot\phi(\xi)\int_{S^{n-2}\cap\{ A(\theta)>3\delta\} }d\theta \,A(\theta) \int_{R_0\ep}^{(R\ep)^\frac12}
\left\{1-\phi\(\xi\)+O(K^{-1})+O(\delta)\right\}\frac{dp}{p}\\&
\quad\quad+O(R)+O(K^{-1}|\ln\ep|))+O(r|\ln\ep|)\\&
\quad= \int_{-\frac{K}{2}}^\frac{K}{2}d\xi\left\{\dot\phi(\xi)-\frac12\frac{d}{d\xi}\left(\phi(\xi)\right)^2\right\}\int_{S^{n-2}\cap\{ A(\theta)>3\delta\} }d\theta A(\theta) \int_{R_0\ep}^{(R\ep)^\frac12}\frac{dp}{p}\\&
\quad\quad+O(K^{-1}|\ln\ep|)+O(\delta|\ln\ep|)+O(R)+O(r|\ln\ep|)\\&
\quad=\left[\phi\(\frac{K}{2}\)-\phi\(-\frac{K}{2}\)-\frac12\(\phi^2\(\frac{K}{2}\)-\phi^2\(-\frac{K}{2}\)\)\right]\int_{S^{n-2}\cap\{ A(\theta)>3\delta\} }d\theta  A(\theta)\int_{R_0\ep}^{(R\ep)^\frac12}\frac{dp}{p}\\&
\quad\quad+O(K^{-1}|\ln\ep|)+O(\delta|\ln\ep|)+O(R)+O(r|\ln\ep|).
\end{align*}
Since, again by  \eqref{eq:asymptotics for phi}, 
$$\phi\(\frac{K}{2}\)-\phi\(-\frac{K}{2}\)-\frac12\(\phi^2\(\frac{K}{2}\)-\phi^2\(-\frac{K}{2}\)\)=\frac12+O(K^{-1}),$$
we get
\begin{align*}
&\int_{-\frac{K}{2}}^\frac{K}{2}d\xi\,\dot\phi(\xi) \int_{S^{n-2}\cap\{ A(\theta)>3\delta\} }d\theta \int_{R_0\ep}^{(R\ep)^\frac12}\frac{dp}{p^2}\int_{\delta\frac{\ep}{p}}^{K\frac{\ep}{p}}(\ldots)\,dt\\&
\quad=\left(\frac12+O(K^{-1})\right)\int_{S^{n-2}\cap\{ A(\theta)>3\delta\} }d\theta A(\theta) \int_{R_0\ep}^{(R\ep)^\frac12}\frac{dp}{p}\\&
\quad\quad+O(K^{-1}|\ln\ep|)+O(\delta|\ln\ep|)+O(R)+O(r|\ln\ep|)\\&
\quad=\frac12\int_{S^{n-2}\cap\{ A(\theta)>3\delta\} }d\theta A(\theta)\int_{R_0\ep}^{(R\ep)^\frac12}\frac{dp}{p}+O(K^{-1}|\ln\ep|)+O(\delta|\ln\ep|)+O(R)+O(r|\ln\ep|)\\&
\quad=\frac12\(\frac12|\ln\ep|+\frac12\ln R-\ln R_0\)\int_{S^{n-2}\cap\{ A(\theta)>3\delta\} }A(\theta)\,d\theta\\&
\quad\quad+O(K^{-1}|\ln\ep|)+O(\delta|\ln\ep|)+O(R)+O(r|\ln\ep|).
\end{align*}
On the other hand,  by \eqref{phi-infyinftyaeptermxiest} and \eqref{J5thirdtermfirstest}, we get
\begin{align*}
& \int_{\{|\xi|>\frac{K}{2}\}}d\xi\,\dot\phi(\xi) \int_{S^{n-2}\cap\{ A(\theta)>3\delta\} }d\theta \int_{R_0\ep}^{(R\ep)^\frac12}\frac{dp}{p^2}\int_{\delta\frac{\ep}{p}}^{K\frac{\ep}{p}}(\ldots)\,dt\\&
\quad=\int_{\{|\xi|>\frac{K}{2}\}}d\xi\,\dot\phi(\xi)\left[\int_{S^{n-2}\cap\{ A(\theta)>3\delta\} }d\theta A(\theta) \int_{R_0\ep}^{(R\ep)^\frac12}
\left\{\phi\(\xi  +K\)-\phi\(\xi  +\delta\)\right\}\frac{dp}{p}\right.\\&
\qquad\left. + \,O(R)+O(K^{-1}|\ln\ep|)+O(r|\ln\ep|)\right]\\&
\quad \leq C|\ln\ep| \int_{\{|\xi|>\frac{K}{2}\}}\dot\phi(\xi)\,d\xi
=O(K^{-1}|\ln\ep|). 
\end{align*}
The previous two estimates give
\begin{equation}\label{eq:J5split-shortrho:term3}
\begin{split}
&\frac{1}{|\ln\ep|}\int_\R d\xi\,\dot\phi(\xi)\int_{S^{n-2}\cap\{ A(\theta)>3\delta\} }d\theta \int_{R_0\ep}^{(R\ep)^\frac12}\frac{dp}{p^2}\int_{\delta\frac{\ep}{p}}^{K\frac{\ep}{p}}(\ldots)\,dt\\&\quad=\frac14\int_{S^{n-2}\cap\{ A(\theta)>3\delta\} }A(\theta)\,d\theta+o_\ep(1)+O(r)+O(K^{-1})+O(\delta). 
\end{split}\end{equation}
From \eqref{eq:J5split-shortrho}, 
\eqref{eq:J5split-shortrho:term1},   \eqref{eq:J5split-shortrho:term2}, \eqref{eq:J5split-shortrho:term3} and 
recalling  \eqref{eq:chooseR}, \eqref{R0Kdefstep1aii} and \eqref{R0defstep1aii}, we get \eqref{J_1estimatefinal}.

Lastly, we will show that
\begin{equation}\label{eq:J23}
\frac{1}{|\ln \ep|} \int_{\R} \dot{\phi}(\xi)J_2^2(\xi) \, d \xi = O(\delta).
\end{equation}
Indeed, 
\begin{align*}
 0\leq J_2^2(\xi) &= \int_{S^{n-2}\cap\{A(\theta)>3\delta\} }d\theta \int_{\ep}^{(R\ep)^\frac12}\frac{dp}{p^2}\int_{-\delta\frac{\ep}{p}}^{\delta\frac{\ep}{p}}\\
 &\qquad\left\{ \phi\(\xi + \frac{p}{\ep}\left(t+p b(\theta,t, r)\right)\) - \phi\( \xi + \frac{tp}{\ep}\)\right\} \frac{dt}{\left(t^2+1\right)^\frac{n+1}{2}}\\&
 \leq C \int_{S^{n-2}\cap\{A(\theta)>3\delta\} }d\theta \int_{\ep}^{(R\ep)^\frac12}\frac{dp}{p^2}\int_{-\delta\frac{\ep}{p}}^{\delta\frac{\ep}{p}}\frac{p^2}{\ep} b(\theta,r,t) \, dt \\
 &\leq \frac{C}{\ep}\int_{S^{n-2}\cap\{ A(\theta)>3\delta\} }d\theta \int_{\ep}^{(R\ep)^\frac12}\delta\frac{\ep}{p}\, dp\leq C\delta|\ln\ep|,
\end{align*}
from which we obtain \eqref{eq:J23}.

Therefore, combining \eqref{J_1estimatefinal}, \eqref{J_5estimatefinal}  and \eqref{eq:J23}, we have \eqref{J2}. 

\medskip
 
\noindent
\underline{\bf Completion of Step 1}.
Recall \eqref{Ixisplitint} and \eqref{J1J2splitofI_1}.  With \eqref{J1} and \eqref{J2}, 
we can finally write 
\begin{equation}\label{eq:I_1^1}
 \frac{1}{|\ln \ep|} \int_{\R} \dot{\phi}(\xi) I_1^1(\xi) \, d \xi
	=\int_{S^{n-2}\cap\{ A(\theta)>3\delta\} }A(\theta)\,d\theta+o_\ep(1) +O(\delta)+ O(r).
\end{equation}
In the same way, we obtain
\begin{equation}\label{eq:I_1^2}
 \frac{1}{|\ln \ep|} \int_{\R} \dot{\phi}(\xi) I_1^2(\xi) \, d \xi
	=\int_{S^{n-2}\cap\{ A(\theta)<-3\delta\} }A(\theta)\,d\theta+o_\ep(1) +O(\delta)+ O(r).
\end{equation}
Finally, let us show that 
\begin{equation}\label{eq:I_1^3}
 \frac{1}{|\ln \ep|} \int_{\R} \dot{\phi}(\xi) I_1^3(\xi) \, d \xi
	=o_\ep(1) +o_\delta(1)+ O(r).
\end{equation}
If one of the eigenvalues $\lambda_i$ is different than zero, then $\mathcal{H}^{n-2}(\{\theta\in S^{n-2}\,|\,A(\theta)=0\})=0$.
In particular, $\mathcal{H}^{n-2}(\{\theta\in S^{n-2}\,|\,|A(\theta)|<3\delta\})=o_\delta(1).$
Therefore, integrating in $t$ as before, 
\begin{align*}
\left| \int_{\R} \dot{\phi}(\xi) I_1^3(\xi) \, d \xi\right|&\leq C \int_\R d\xi\,\dot\phi(\xi)\int_{S^{n-2}\cap\{ |A(\theta)|<3\delta\} }d\theta \int_{\ep}^r\frac{dp}{p}\\&
\leq C|\ln\ep|\int_{S^{n-2}\cap\{ |A(\theta)|<3\delta\} }d\theta
=|\ln\ep|o_\delta(1),
\end{align*} which implies \eqref{eq:I_1^3}. 

If instead, $\lambda_i=0$ for all $i=1,\ldots,n-1$, then $A(\theta)\equiv0$ and $S^{n-2}\cap\{|A(\theta)|<3\delta\}=S^{n-2}$. In this case, we write, 
 for $\delta$ and $R$ as in 
\eqref{deltaC_0-ine} and \eqref{eq:chooseR}, 
\begin{align*}
 I_1^3(\xi)&=
\int_{S^{n-2} }d\theta \int_{(R\ep)^\frac12}^r\frac{dp}{p^2}\int_{-1}^{-2\delta p}(\ldots)\,dt+\int_{S^{n-2}\ }d\theta \int_{(R\ep)^\frac12}^r\frac{dp}{p^2}\int_{-2\delta p}^{2\delta p}(\ldots)\,dt\\&
 \quad+\int_{S^{n-2}}d\theta \int_{(R\ep)^\frac12}^r\frac{dp}{p^2}\int_{2\delta p}^{1}(\ldots)\,dt\\&
 \quad +\int_{S^{n-2} }d\theta \int_{\ep}^{(R\ep)^\frac12}\frac{dp}{p^2}\int_{-1}^{-\delta \frac{\ep}{p}}(\ldots)\,dt+\int_{S^{n-2}\ }d\theta \int_{\ep}^{(R\ep)^\frac12}\frac{dp}{p^2}
 \int_{-\delta \frac{\ep}{p}}^{\delta \frac{\ep}{p}}(\ldots)\,dt\\&
 \quad+\int_{S^{n-2}}d\theta \int_{\ep}^{(R\ep)^\frac12}\frac{dp}{p^2}\int_{\delta \frac{\ep}{p}}^{1}(\ldots)\,dt.
  \end{align*}
As for the estimates of $\int_{\R}\dot\phi(\xi)(J_1^1(\xi)+J_1^2(\xi)+J_1^5(\xi))\,d\xi$ (recall \eqref{eq:J1k}), 
\begin{align*}
\frac{1}{|\ln\ep|}\int_{\R}d\xi\,\dot\phi(\xi)\int_{S^{n-2} }d\theta\int_{(R\ep)^\frac12}^r\frac{dp}{p^2}\int_{-1}^{-2\delta p}(\ldots)\,dt
&=O(\delta),
 \end{align*}
 \begin{align*}
\frac{1}{|\ln\ep|}\int_{\R}d\xi\,\dot\phi(\xi)\int_{S^{n-2} }d\theta\int_{(R\ep)^\frac12}^r\frac{dp}{p^2}\int_{-2\delta p}^{2\delta p}(\ldots)\,dt
&=O(\delta),
 \end{align*}
and 
\begin{align*}
\frac{1}{|\ln\ep|}\int_{\R}d\xi\,\dot\phi(\xi)\int_{S^{n-2} }d\theta\int_{(R\ep)^\frac12}^r\frac{dp}{p^2}\int_{2\delta p}^{1}(\ldots)\,dt
&=O(\delta).
 \end{align*}
As for the estimates of
 $\int_{\R}\dot\phi(\xi)(J_2^1(\xi)+J_2^2(\xi)+J_2^3(\xi))\,d\xi$ (recall  \eqref{J_1estimatefinal}, \eqref{J_5estimatefinal} and \eqref{eq:J23}), 
\begin{align*}
\frac{1}{|\ln\ep|}\int_{\R}d\xi\,\dot\phi(\xi)\int_{S^{n-2} }d\theta \int_{\ep}^{(R\ep)^\frac12}\frac{dp}{p^2}\int_{-1}^{-\delta \frac{\ep}{p}}(\ldots)\,dt
&=\frac14\int_{S^{n-2}}A(\theta)d\theta+o_\ep(1)+O(r)+O(\delta)\\&
=o_\ep(1)+O(r)+O(\delta),
 \end{align*}
 \begin{align*}
\frac{1}{|\ln\ep|}\int_{\R}d\xi\,\dot\phi(\xi)\int_{S^{n-2} }d\theta \int_{\ep}^{(R\ep)^\frac12}\frac{dp}{p^2}\int_{-\delta \frac{\ep}{p}}^{\delta \frac{\ep}{p}}(\ldots)\,dt
&=O(\delta),
 \end{align*}
and 
\begin{align*}
\frac{1}{|\ln\ep|}\int_{\R}d\xi\,\dot\phi(\xi)\int_{S^{n-2} }d\theta \int_{\ep}^{(R\ep)^\frac12}\frac{dp}{p^2}\int_{\delta \frac{\ep}{p}}^1(\ldots)\,dt&=\frac14\int_{S^{n-2}}A(\theta)d\theta+o_\ep(1)+O(r)+O(\delta)\\&
=o_\ep(1)+O(r)+O(\delta).
\end{align*}
Estimate \eqref{eq:I_1^3} then follows. 

From \eqref{eq:I_1^1}, \eqref{eq:I_1^2} and \eqref{eq:I_1^3}, and recalling \eqref{deltaC_0-ine}, we choose  $\delta=C_0r$ to    finally get  \eqref{eq:I_1-final}. 

\medskip

 \noindent
\underline{\bf Step 2. Estimating $\frac{1}{|\ln \ep|} \int_{\R} \dot{\phi}(\xi) I_2(\xi) \, d \xi$}.
We will show that
\begin{equation}\label{I_2est} 
\frac{1}{|\ln\ep|}\int_{\R}\dot\phi(\xi)I_2(\xi)\,d\xi = o_r(1).
\end{equation}
Recalling \eqref{bthetatrde}, we see that for $|t|>1$, there is $C_1>0$ such that 
\begin{equation*} 
A(\theta)-C_1rt^2\leq b(\theta,t,r)\leq A(\theta)+C_1rt^2. \end{equation*}
Then, for $C_1$ as above and $R>2$, to be determined,  by the monotonicity of $\phi$, 
\begin{align*}
I_2(\xi)&\leq 
 \int_{\ep}^r\frac{dp}{p^2}\int_{S^{n-2}}d\theta\int_{\left\{|t|>1\right\}}
 \left\{ \phi\(\xi + \frac{tp}{\ep}+ \frac{p^2}{\ep} (A(\theta)+C_1rt^2)\) - \phi\( \xi + \frac{tp}{\ep}\)\right\} \frac{dt}{\left(t^2+1\right)^\frac{n+1}{2}}\\
 &=\int_{\ep}^r\frac{dp}{p^2}\int_{S^{n-2}}d\theta\int_{\left\{|t|>\frac{1}{2\sqrt{rRC_1p}}\right\}}(\ldots)\,dt
+ \int_{\ep}^r\frac{dp}{p^2}\int_{S^{n-2}}d\theta\int_{\left\{1<|t|<\frac{1}{2\sqrt{rRC_1p}}\right\}}(\ldots)\,dt\\&=:I_2^1(\xi)+I_2^2(\xi).
\end{align*}
We first estimate
\begin{align*}I_2^1(\xi)&\leq 2 \int_{\ep}^r\frac{dp}{p^2}\int_{S^{n-2}}d\theta\int_{\left\{|t|>\frac{1}{2\sqrt{rRC_1p}}\right\}}\frac{dt}{|t|^{n+1}}\\
&\leq C\int_{\ep}^r(rR)^\frac{n}{2}p^{\frac{n}{2}-2}\,dp\leq C(rR)^\frac{n}{2}\int_{\ep}^r\frac{dp}{p} 
\leq C(rR)^\frac{n}{2}|\ln\ep|,
\end{align*}
so that  
\begin{equation}\label{I_2^1estimate}\frac{1}{|\ln\ep|}\int_\R\dot\phi(\xi)I_2^1(\xi)\,d\xi\leq C(rR)^\frac{n}{2}.
\end{equation}
Next, let us estimate $\int_\R\dot\phi(\xi)I_2^2(\xi)\,d\xi$. 
If 
\begin{equation*}\label{xitpRassumestI_2^1}|\xi|\leq\frac{|t|p}{\ep}-\frac{p}{R\ep},\quad 0\leq p\leq r<1, \quad 1\leq |t|\leq \frac{1}{2\sqrt{rRC_1p}},\quad R>2,\quad 0\leq \tau\leq 1, 
\end{equation*} and $R,\,r$ are such that 
\begin{equation}\label{xitpRassumestI_2^1bis} r|A(\theta)|\leq \frac{1}{4R}, \end{equation} 
then 
\begin{align*}\left|\xi + \frac{tp}{\ep}+ \tau \frac{p^2}{\ep} (A(\theta)+C_1rt^2)\right|&\geq  \frac{|t|p}{\ep}-|\xi|- \frac{p}{4R\ep}-\frac{p}{4R\ep}
\geq \frac{p}{2R\ep}.
\end{align*}
Therefore, by \eqref{eq:asymptotics for phi dot},  for some $\tau\in(0,1)$, 
\begin{equation*}
\begin{split}
& \phi\(\xi + \frac{tp}{\ep}+ \frac{p^2}{\ep} (A(\theta)+C_1rt^2)\) - \phi\( \xi + \frac{tp}{\ep}\)\\&\quad=\dot\phi\(\xi + \frac{tp}{\ep}+\tau \frac{p^2}{\ep} (A(\theta)+C_1rt^2)\)\frac{p^2}{\ep} (A(\theta)+C_1rt^2)\\&
\quad\leq \frac{C}{\left|\xi + \frac{tp}{\ep}+ \tau \frac{p^2}{\ep} (A(\theta)+C_1rt^2)\right|^2} \frac{p^2}{\ep}(1+rt^2) \leq 
C(1+rt^2)R^2\ep,
\end{split}
\end{equation*}
from which we find that
 \begin{align*}
& \int_{\ep}^r\frac{dp}{p^2}\int_{S^{n-2}}d\theta\int_{\left\{1<|t|<\frac{1}{2\sqrt{rRC_1p}}\right\}}\frac{dt}{\left(t^2+1\right)^\frac{n+1}{2}}
\int_{-\frac{|t|p}{\ep}+\frac{p}{R\ep}}^{\frac{|t|p}{\ep}-\frac{p}{R\ep}}
\dot\phi(\xi)
\\
&\qquad\times \left\{ \phi\(\xi + \frac{tp}{\ep}+ \frac{p^2}{\ep} (A(\theta)+C_1rt^2)\) - \phi\( \xi + \frac{tp}{\ep}\) \right\}d\xi
 \\&
 \quad\leq C R^2\ep\int_{\R}d\xi \,\dot\phi(\xi)\int_{S^{n-2}}d\theta
\int_{\ep}^r\frac{dp}{p^2}  \int_{\left\{1<|t|<\frac{1}{2\sqrt{rRC_1p}}\right\}} \frac{1+rt^2}{|t|^{n+1}}\,dt\\&
\quad\leq C R^2\ep
\int_{\ep}^r\frac{dp}{p^2}  \int_{\left\{1<|t|<\frac{1}{2\sqrt{rRC_1\ep}}\right\}}\( \frac{1}{|t|^{n+1}}+\frac{r}{|t|}\)\,dt\\&
\quad\leq C R^2\(1+r|\ln\ep|\).
 \end{align*}
Consequently,
\begin{equation}\label{intI_2^2shortxi}\begin{split}
&\frac{1}{|\ln\ep|}\int_{\ep}^r\frac{dp}{p^2}\int_{S^{n-2}}d\theta\int_{\left\{1<|t|<\frac{1}{2\sqrt{rRC_1p}}\right\}} \frac{dt}{\left(t^2+1\right)^\frac{n+1}{2}}
\int_{\{|\xi|<\frac{|t|p }{\ep}-\frac{p}{R\ep}\}}\dot\phi(\xi)
\\&\qquad \times \left\{ \phi\(\xi + \frac{tp}{\ep}+ \frac{p^2}{\ep} (A(\theta)+C_1rt^2)\)  - \phi\( \xi + \frac{tp}{\ep}\) \right\} d\xi
\\&\quad\leq CR^2\(\frac{1}{|\ln\ep|}+r\). 
\end{split}
\end{equation}
Next,  again by \eqref{eq:asymptotics for phi dot}, for   $|t|>1$, $R>2$ and for some  $\tau_1,\,\tau_2\in(-1,1)$, 
 \begin{align*}
 &\int_{\left\{\frac{|t|p}{\ep}-\frac{p}{R\ep}<|\xi|<\frac{|t|p}{\ep}+\frac{p}{R\ep}\right\}}\dot\phi(\xi)\,d\xi\\
 &\qquad=\(\phi\(\frac{|t|p}{\ep}+\frac{p}{R\ep}\)-\phi\(\frac{|t|p}{\ep}-\frac{p}{R\ep}\)\)+\(\phi\(-\frac{|t|p}{\ep}+\frac{p}{R\ep}\)-\phi\(-\frac{|t|p}{\ep}-\frac{p}{R\ep}\)\) \\ &\qquad
=\dot\phi\(\frac{|t|p}{\ep}+\tau_1\frac{p}{R\ep}\)\frac{2p}{R\ep}+\dot\phi\(-\frac{|t|p}{\ep}+\tau_2\frac{p}{R\ep}\)\frac{2p}{R\ep}\\ &\qquad
\leq C\(\frac{\ep}{tp}\)^2\frac{p}{R\ep}
=C\frac{\ep}{Rp t^2}.
\end{align*}
Therefore,
 \begin{align*}
 &\int_{\ep}^r\frac{dp}{p^2}\int_{S^{n-2}}d\theta\int_{\left\{1<|t|<\frac{1}{2\sqrt{rRC_1p}}\right\}} \frac{dt}{\left(t^2+1\right)^\frac{n+1}{2}}\int_{\left\{\frac{|t|p}{\ep}-\frac{p}{R\ep}<|\xi|<\frac{|t|p}{\ep}+\frac{p}{R\ep}\right\}}\dot\phi(\xi)
\\&\qquad \times \left\{ \phi\(\xi + \frac{tp}{\ep}+ \frac{p^2}{\ep} (A(\theta)+C_1rt^2)\)  - \phi\( \xi + \frac{tp}{\ep}\) \right\} d\xi
\\&\quad\leq  C\int_{S^{n-2}}d\theta\int_{|t|>1}dt\,\frac{1+rt^2}{|t|^{n+1}}\int_{\ep}^r \frac{dp}{p^2}\frac{p^2}{\ep}\int_{\left\{\frac{|t|p}{\ep}-\frac{p}{R\ep}<|\xi|<\frac{|t|p}{\ep}+\frac{p}{R\ep}\right\}}\dot\phi(\xi)\,d\xi\\&
\quad\leq \frac{C}{R}\int_{\ep}^r \frac{dp}{p}\int_{|t|>1}\frac{dt}{|t|^{n+1}}\\&
\quad\leq \frac{C|\ln\ep|}{R}.
 \end{align*}
 Consequently,
  \begin{equation}\label{I_2^2estimateximiddle}\begin{split}
  &\frac{1}{|\ln\ep|}\int_{\ep}^r\frac{dp}{p^2}\int_{S^{n-2}}d\theta\int_{\left\{1<|t|<\frac{1}{2\sqrt{rRC_1p}}\right\}} \frac{dt}{\left(t^2+1\right)^\frac{n+1}{2}}\int_{\left\{\frac{|t|p}{\ep}-\frac{p}{R\ep}<|\xi|<\frac{|t|p}{\ep}+\frac{p}{R\ep}\right\}}\dot\phi(\xi)
\\&\qquad\quad\quad \left\{ \phi\(\xi + \frac{tp}{\ep}+ \frac{p^2}{\ep} (A(\theta)+C_1rt^2)\) - \phi\( \xi + \frac{tp}{\ep}\) \right\} d\xi
\leq  \frac{C}{R}.
 \end{split}
   \end{equation}
   Finally, if 
   $$|\xi|\geq \frac{|t|p}{\ep}+\frac{p}{R\ep},\quad 0\leq p\leq r, \quad 1\leq |t|\leq \frac{1}{2\sqrt{rRC_1p}},\quad R>2,\quad 0\leq \tau\leq 1,$$ 
   and \eqref{xitpRassumestI_2^1bis} holds true, then 
  \begin{align*}\left|\xi + \frac{tp}{\ep}+ \tau\frac{p^2}{\ep} (A(\theta)+C_1rt^2)\right|&\geq |\xi|-\frac{|t|p}{\ep}- \frac{p^2}{\ep}|A(\theta)|-\frac{p^2}{\ep}C_1rt^2
\geq  \frac{p}{2R\ep},
\end{align*}
 and as before, by \eqref{eq:asymptotics for phi dot},
\begin{align*}
\phi\(\xi + \frac{tp}{\ep}+ \frac{p^2}{\ep} (A(\theta)+C_1rt^2)\) - \phi\( \xi + \frac{tp}{\ep}\)
\leq C(1+rt^2)R^2\ep.
\end{align*}
Therefore,
 \begin{align*}
& \int_{\ep}^r\frac{dp}{p^2}\int_{S^{n-2}}d\theta\int_{\left\{1<|t|<\frac{1}{2\sqrt{rRC_1p}}\right\}} \frac{dt}{\left(t^2+1\right)^\frac{n+1}{2}}\int_{\{|\xi|>\frac{|t|p}{\ep}+\frac{p}{R\ep}\}}\dot\phi(\xi)\\
&\quad\quad \times \left\{\phi\(\xi + \frac{tp}{\ep}+ \frac{p^2}{\ep} (A(\theta)+C_1rt^2)\)  - \phi\( \xi + \frac{tp}{\ep}\) \right\}d\xi
 \\&
 \quad\leq C R^2\ep
\int_{\ep}^r\frac{dp}{p^2}  \int_{\left\{1<|t|<\frac{1}{2\sqrt{rRC_1\ep }}\right\}} \frac{1+rt^2}{|t|^{n+1}}\,dt\\&
\quad\leq C R^2\(1+r|\ln\ep|\),
 \end{align*}
 and 
 \begin{equation}\label{intI_2^2largexi}\begin{split}
&\frac{1}{|\ln\ep|}\int_{\ep}^r\frac{dp}{p^2}\int_{S^{n-2}}d\theta\int_{\left\{1<|t|<\frac{1}{2\sqrt{rRC_1p}}\right\}}\frac{dt}{\left(t^2+1\right)^\frac{n+1}{2}} \int_{\{|\xi|>\frac{|t|p}{\ep}+\frac{p}{R\ep}\}}\dot\phi(\xi)
\\& \quad\quad
\times \left\{ \phi\(\xi + \frac{tp}{\ep}+ \frac{p^2}{\ep} (A(\theta)+C_1rt^2)\) - \phi\( \xi + \frac{tp}{\ep}\) \right\} d\xi
\leq CR^2\(\frac{1}{|\ln\ep|}+r\). 
\end{split}
\end{equation}
From \eqref{intI_2^2shortxi}, \eqref{I_2^2estimateximiddle} and \eqref{intI_2^2largexi},  choosing $R=r^{-\frac13}$, we have that \eqref{xitpRassumestI_2^1bis} holds true for $r$ small enough, and 
\begin{equation*}\frac{1}{|\ln\ep|}\int_\R\dot\phi(\xi)I_2^2(\xi)\,d\xi \leq o_r(1).
\end{equation*}
Together with \eqref{I_2^1estimate}, this gives
\begin{equation*}
\frac{1}{|\ln\ep|}\int_{\R}\dot\phi(\xi)I_2(\xi)\,d\xi\leq o_r(1).
\end{equation*}
The  lower bound for $\int_{\R}\dot\phi(\xi)I_2(\xi)\,d\xi$ is obtained in a similar way. Estimate  
\eqref{I_2est}  follows.  

\medskip

 \noindent
\underline{\bf Conclusion}.
Recalling \eqref{Ixisplitintbo}, we combine \eqref{eq:I_1-final} and \eqref{I_2est} to finally obtain
\begin{equation}\label{eq:I-final}
\frac{1}{|\ln \ep|} \int_{\R} \dot{\phi}(\xi)I(\xi) \, d\xi
	= \int_{S^{n-2} } A(\theta ) \, d \theta +o_\ep(1)+ o_r(1),
\end{equation}
where $o_\ep(1)$ depends on the parameter $r$. 
From \eqref{eq:aebarI-bis} and  \eqref{eq:I-final}, we first send $\ep\to0$ and then $r\to0$ to arrive at
\[
\lim_{\ep \to 0} \bar{a}_\ep(x)
	=   \int_{S^{n-2}} A(\theta) \, d \theta,
\]
uniformly in $Q_\rho$.
Recalling \eqref{eq:Atheta-MC}, this gives the desired result. 
 \qed
 
\section{Proof of Theorem \ref{lem:4} for $s \in (\frac12,1)$}\label{sec:s-large}

Throughout this section, we assume that $s  \in ( \frac12,1)$.

Fix $r \in(0,\frac{\rho}{2})$ and take $0 < \ep <r$. 
By Lemma \ref{lem:ae-change-of-variables}, we have
\begin{equation}\label{eq:aebarI-s}
\bar{a}_\ep(x)
= O\left(\frac{\ep^{2s-1}}{r^{2s}}\right) 
+ \ep^{2s-1} \int_{\R} G(\xi) \dot{\phi}(\xi) \, d \xi
\end{equation}
where
\begin{align*}
G(\xi)
&=\int_{0}^r \frac{dp}{p^{2s+1}} \int_{S^{n-2}} d \theta
	 \int_{-\frac{r}{p}}^{\frac{r}{p}}\frac{dt}{ (t^2 +1)^{\frac{n+2s}{2}}} \left\{ \phi\(\xi + \frac{p}{\ep}\(t+pb(\theta,t,r)\)\) - \phi\( \xi + \frac{tp}{\ep}\)\right\}.
\end{align*}
We start by splitting $G(\xi)$, for $R>0$ to be chosen, as
 \begin{equation}\label{eq:I-split-s}
 \begin{split}
 G(\xi)
 &=
 \int_{0}^{R\ep}\frac{dp}{p^{2s+1}}\int_{S^{n-2}}d\theta \int_{-\frac{r}{p}}^{\frac{r}{p}}(\ldots)\,dt
 	+\int_{R\ep}^{r}\frac{dp}{p^{2s+1}}\int_{S^{n-2}}d\theta \int_{-\frac{r}{p}}^{\frac{r}{p}}(\ldots)\,dt\\
	&=:I_1(\xi)+I_2(\xi),
 \end{split}
 \end{equation}
and estimate $\ep^{2s-1} \int_{\R} \dot{\phi}(\xi) I_1(\xi) \, d \xi$ and $\ep^{2s-1} \int_{\R} \dot{\phi}(\xi) I_2(\xi) \, d \xi$  separately. 

\medskip
 \noindent
\underline{\bf Step 1. Estimating $\ep^{2s-1} \int_{\R} \dot{\phi}(\xi) I_1(\xi) \, d \xi$}. 
We will show that
\begin{equation}\label{eq:I1new-final-s}
\ep^{2s-1} \int_{\R} \dot{\phi}(\xi) I_1(\xi) \, d \xi = 
c_1c_2\int_{S^{n-2}} A(\theta) \, d \theta
+ o_\ep(1) +o_r(1)
\end{equation}
where $c_1>0$ is defined in \eqref{finite-energy} and  $c_2>0$ depends only on $n$ and $s$ and is to be determined. 
Recalling \eqref{bthetatrde}, we write
\begin{align*}
&I_1(\xi)\\
&= \int_{0}^{R\ep} \frac{dp}{p^{2s+1}} \int_{S^{n-2}} d \theta
	 \int_{-\frac{r}{p}}^{\frac{r}{p}}
	 \left\{ \phi\(\xi + \frac{p}{\ep}\(t+pb(\theta,t,r)\)\) - \phi\( \xi + \frac{tp}{\ep}\)\right\}\frac{dt}{ (t^2 +1)^{\frac{n+2s}{2}}}\\&
	 = \int_{0}^{R\ep} \frac{dp}{p^{2s+1}} \int_{S^{n-2}} d \theta
	\int_{-\frac{r}{p}}^{\frac{r}{p}}\frac{dt}{ (t^2 +1)^{\frac{n+2s}{2}}}\\&\quad \times\int_0^1\dot \phi\(\xi + \frac{p}{\ep}\(t+\tau pb(\theta,t,r)\)\)\frac{p^2}{\ep}\(A(\theta) + O\left(r(1+t^2)\right)\)\,d\tau\\&
	  = \frac{1}{\ep} \int_{0}^{R\ep} \frac{dp}{p^{2s-1}} \int_{S^{n-2}}  d \theta\, A(\theta)
	\int_{-\frac{r}{p}}^{\frac{r}{p}}\frac{dt}{ (t^2 +1)^{\frac{n+2s}{2}}}\int_0^1
	 \dot \phi\(\xi + \frac{p}{\ep}\(t+\tau pb(\theta,t,r)\)\)\,d\tau\\&
	  \quad + \frac{O(r)}{\ep} \int_{0}^{R\ep} \frac{dp}{p^{2s-1}} \int_{S^{n-2}} d \theta
	\int_{-\frac{r}{p}}^{\frac{r}{p}}\frac{dt}{ (t^2 +1)^{\frac{n+2s-2}{2}}}\\&
	 = \frac{1}{\ep} \int_{0}^{R\ep} \frac{dp}{p^{2s-1}} \int_{S^{n-2}} d \theta\, A(\theta)
	\int_{-\frac{r}{p}}^{\frac{r}{p}}\frac{dt}{ (t^2 +1)^{\frac{n+2s}{2}}}\int_0^1
	 \(\dot \phi\(\xi + \frac{pt}{\ep}\) + O\left(\tau \frac{p^2}{\ep}b(\theta,t,r)\right)\)\,d\tau\\&
	  \quad + \frac{O(r)}{\ep} \int_{0}^{R\ep} \frac{dp}{p^{2s-1}} \int_{S^{n-2}} d \theta
	\int_{-\frac{r}{p}}^{\frac{r}{p}}\frac{dt}{ (t^2 +1)^{\frac{n+2s-2}{2}}}\\&
	 =I_1^1(\xi)+E_1(\xi)+E_2(\xi),
\end{align*}
where we take
\begin{equation*}
	I_1^1(\xi)=\frac{1}{\ep}\int_{0}^{R\ep} \frac{dp}{p^{2s-1}} \int_{S^{n-2}}d \theta\, A(\theta)
 \int_{-\frac{r}{p}}^{\frac{r}{p}}\dot \phi\(\xi + \frac{pt}{\ep}\)\frac{dt}{ (t^2 +1)^{\frac{n+2s}{2}}},
 \end{equation*}
 \begin{equation}\label{E_1-s}
	E_1(\xi)=\frac{1}{\ep^2}\int_{0}^{R\ep} dp\,p^{3-2s}
 \int_{-\frac{r}{p}}^{\frac{r}{p}}
 O\(1+r(1+t^2)\)\frac{dt}{ (t^2 +1)^{\frac{n+2s}{2}}},
 \end{equation}
 and 
 \begin{equation}\label{E_2-s}
	E_2(\xi)=\frac{O(r)}{\ep}\int_{0}^{R\ep} \frac{dp}{p^{2s-1}}
 \int_{-\frac{r}{p}}^{\frac{r}{p}}\frac{dt}{ (t^2 +1)^{\frac{n+2s-2}{2}}}.
 \end{equation} 
Integrating by parts in $t$, we have
\begin{align*}
I_1^1(\xi)&
=\frac{1}{2\ep}\int_{0}^{R\ep} \frac{dp}{p^{2s-1}} \int_{S^{n-2}} d \theta\, A(\theta)
\int_{-\frac{r}{p}}^{\frac{r}{p}}\(\dot \phi\(\xi + \frac{pt}{\ep}\)+\dot \phi\(\xi -\frac{pt}{\ep}\)\)\frac{dt}{ (t^2 +1)^{\frac{n+2s}{2}}}\\&
 =\frac{1}{2}\int_{0}^{R\ep} \frac{dp}{p^{2s}} \int_{S^{n-2}} d \theta\, A(\theta)
 \int_{-\frac{r}{p}}^{\frac{r}{p}}\partial_t\left[\phi\(\xi + \frac{pt}{\ep}\)-\phi\(\xi - \frac{pt}{\ep}\)\right]\frac{dt}{ (t^2 +1)^{\frac{n+2s}{2}}}\\&
 =\frac{1}{2}\int_{0}^{R\ep} \frac{dp}{p^{2s}} \int_{S^{n-2}} d \theta\, A(\theta)\left[\(\phi\(\xi + \frac{pt}{\ep}\)-\phi\(\xi - \frac{pt}{\ep}\)\)\frac{1}{ (t^2 +1)^{\frac{n+2s}{2}}}\Big\vert_{t=-\frac{r}{p}}^{t=\frac{r}{p}}\right.\\&
 \quad \left.+(n+2s) \int_{-\frac{r}{p}}^{\frac{r}{p}}\(\phi\(\xi + \frac{pt}{\ep}\)-\phi\(\xi -\frac{pt}{\ep}\)\) \frac{t}{ (t^2 +1)^{\frac{n+2s+2}{2}}}\,dt\right]\\&
 =I_1^2(\xi)+E_3(\xi),
 \end{align*}
 where 
 \begin{equation*}
 \begin{split}
 I_1^2(\xi)=\frac{n+2s}{2} \int_{0}^{R\ep} \frac{dp}{p^{2s}} \int_{S^{n-2}} d \theta\, A(\theta)
\int_{-\frac{r}{p}}^{\frac{r}{p}}\(\phi\(\xi + \frac{pt}{\ep}\)-\phi\(\xi - \frac{pt}{\ep}\)\)\frac{t}{ (t^2 +1)^{\frac{n+2s+2}{2}}}dt
\end{split}
  \end{equation*}
  and 
\begin{equation}\label{E_3-s}
E_3(\xi)=\frac12\int_{0}^{R\ep} \frac{dp}{p^{2s}} \int_{S^{n-2}} d \theta\, A(\theta)\,\(\phi\(\xi + \frac{pt}{\ep}\)-\phi\(\xi - \frac{pt}{\ep}\)\)\frac{1}{ (t^2 +1)^{\frac{n+2s}{2}}}\Big\vert_{t=-\frac{r}{p}}^{t=\frac{r}{p}}.
  \end{equation}
Note that the integrals above are well defined as,
$$|I_1^2(\xi)|,\,|E_3(\xi)|\leq \frac{C}{\ep}\int_{0}^{R\ep} p^{1-2s}\,dp\leq CR^{2-2s}\ep^{1-2s}. $$
  
  \noindent 
  Therefore, integrating by parts with respect to $\xi$, 
  we get
  \begin{align*}
 \int_\R I_1^2(\xi)\dot\phi(\xi)\,d\xi
 &=\frac{n+2s}{2} \int_{0}^{R\ep} \frac{dp}{p^{2s}} \int_{S^{n-2}} d \theta\, A(\theta)
\int_{-\frac{r}{p}}^{\frac{r}{p}}dt\,\frac{t}{ (t^2 +1)^{\frac{n+2s+2}{2}}}\\
&\qquad\times\int_\R\dot \phi(\xi)\(\phi\(\xi + \frac{pt}{\ep}\)-\phi\(\xi - \frac{pt}{\ep}\)\)\,d\xi\\&
=-\frac{n+2s}{2} \int_{0}^{R\ep} \frac{dp}{p^{2s}} \int_{S^{n-2}} d \theta\, A(\theta)
\int_{-\frac{r}{p}}^{\frac{r}{p}}dt\,\frac{t}{ (t^2 +1)^{\frac{n+2s+2}{2}}}\\
&\qquad \times\int_\R\phi(\xi)\(\dot\phi\(\xi + \frac{pt}{\ep}\)-\dot\phi\(\xi - \frac{pt}{\ep}\)\)\,d\xi.
\end{align*}
Integrating by parts again in $t$, we find
 \begin{align*}
 \int_\R I_1^2(\xi)\dot\phi(\xi)\,d\xi
&=-\ep \frac{n+2s}{2} \int_{0}^{R\ep} \frac{dp}{p^{1+2s}} \int_{S^{n-2}}d \theta\, A(\theta) \int_\R d \xi\, \phi(\xi)\\
&\qquad\times\int_{-\frac{r}{p}}^{\frac{r}{p}}
\partial_t \left[\phi\(\xi + \frac{pt}{\ep}\)+\phi\(\xi - \frac{pt}{\ep}\)-2\phi(\xi)\right] \frac{t}{ (t^2 +1)^{\frac{n+2s+2}{2}}}\,dt\\
&=-\ep\frac{n+2s}{2} \int_{0}^{R\ep} \frac{dp}{p^{1+2s}} \int_{S^{n-2}} d \theta\, A(\theta)\int_\R d \xi\, \phi(\xi)\\
&\qquad\times
\Bigg[ \left(\phi\(\xi + \frac{pt}{\ep}\)+\phi\(\xi - \frac{pt}{\ep}\)-2\phi(\xi)\right) \frac{t}{(t^2+1)^{\frac{n+2s+2}{2}}}\Big|_{t=-\frac{r}{p}}^{t = \frac{r}{p}}\\
&\qquad - \int_{-\frac{r}{p}}^{\frac{r}{p}}
\(\phi\(\xi + \frac{pt}{\ep}\)+\phi\(\xi - \frac{pt}{\ep}\)-2\phi(\xi)\) \frac{d}{dt} \left[ \frac{t}{ (t^2 +1)^{\frac{n+2s+2}{2}}}\right]\,dt
\Bigg]\\
&= F + E_4,
\end{align*}
where 
\begin{equation*}
\begin{split}
F&=  \ep \frac{n+2s}{2} \int_{0}^{R\ep} \frac{dp}{p^{1+2s}} \int_{S^{n-2}} d \theta\, A(\theta)\int_\R d\xi\, \phi(\xi)\\&\qquad\times\int_{-\frac{r}{p}}^{\frac{r}{p}} dt
\(\phi\(\xi + \frac{pt}{\ep}\)+\phi\(\xi - \frac{pt}{\ep}\)-2\phi(\xi)\)\frac{d}{dt}\left[\frac{t}{ (t^2 +1)^{\frac{n+2s+2}{2}}} \right],
\end{split}
\end{equation*}
and 
\begin{equation}\label{E_4-s}
\begin{split}
E_4&=-\ep \frac{n+2s}{2} \int_{0}^{R\ep} \frac{dp}{p^{1+2s}} \int_{S^{n-2}} d \theta\, A(\theta)\int_\R d\xi\\&
\qquad \times \phi(\xi)
\(\phi\(\xi + \frac{pt}{\ep}\)+\phi\(\xi - \frac{pt}{\ep}\)-2\phi(\xi)\) \frac{t}{ (t^2 +1)^{\frac{n+2s+2}{2}}}\bigg\vert_{t=-\frac{r}{p}}^{t=\frac{r}{p}}.
\end{split}
\end{equation}
In $F$, we will make the change of variable $w=pt/\ep$ in $t$ and then $q =  p /(\ep|w|)$ in $p$. In this regard, it is helpful to first write
\begin{align*}
\frac{d}{dt} \left[ \frac{t}{(t^2+1)^{\frac{n+2s+2}{2}}}\right]
	&= \frac{1}{(t^2+1)^{\frac{n+2s+2}{2}}} - (n+2s+2) \frac{t^2}{(t^2+1)^{\frac{n+2s+4}{2}}}\\
	&= \frac{1}{ \left(\frac{\ep^2}{p^2} w^2+1 \right)^{\frac{n+2s+2}{2}}} - (n+2s+2) \frac{\frac{\ep^2}{p^2} w^2}{ \left(\frac{\ep^2}{p^2} w^2+1 \right)^{\frac{n+2s+4}{2}}}\\
	&= \frac{1}{ \left(q^{-2}+1 \right)^{\frac{n+2s+2}{2}}} - (n+2s+2) \frac{q^{-2}}{ \left(q^{-2}+1 \right)^{\frac{n+2s+4}{2}}}\\
	&=q^{n+2s+2} \left[\frac{1}{ \left(1+q^2 \right)^{\frac{n+2s+2}{2}}} - \frac{n+2s+2}{ \left(1+q^2 \right)^{\frac{n+2s+4}{2}}}\right].
\end{align*}
Therefore, making the aforementioned change of variables,
\begin{align*}
F&=  \ep \frac{n+2s}{2} \int_{0}^{R\ep} \frac{dp}{p^{1+2s}} \int_{S^{n-2}} d \theta\, A(\theta)\int_\R d\xi\, \phi(\xi)\int_{-\frac{r}{\ep}}^{\frac{r}{\ep}} \frac{\ep}{p} \, d w
\\&\qquad\times\(\phi\(\xi +w\)+\phi\(\xi +w\)-2\phi(\xi)\) \left[
\frac{1}{ \left(\frac{\ep^2}{p^2} w^2+1 \right)^{\frac{n+2s+2}{2}}} -  \frac{(n+2s+2)\frac{\ep^2}{p^2} w^2}{ \left(\frac{\ep^2}{p^2} w^2+1 \right)^{\frac{n+2s+4}{2}}}\right]\\
&= \ep^2\frac{n+2s}{2} \int_{S^{n-2}} d \theta\, A(\theta)\int_\R d\xi\, \phi(\xi)\int_{-\frac{r}{\ep}}^{\frac{r}{\ep}}
\, d w \int_{0}^{\frac{R}{|w|}}
\frac{dq}{(\ep |w|)^{1+2s}q^{2+2s}}
\\&\qquad \times\(\phi\(\xi +w\)+\phi\(\xi -w\)-2\phi(\xi)\)  q^{n+2s+2} \left[\frac{1}{ \left(1+q^2 \right)^{\frac{n+2s+2}{2}}} -  \frac{n+2s+2}{ \left(1+q^2 \right)^{\frac{n+2s+4}{2}}}\right]\\
&= \ep^{1-2s} \frac{n+2s}{2}   \int_{S^{n-2}} d \theta\, A(\theta)\int_\R d\xi\, \phi(\xi)\int_{-\frac{r}{\ep}}^{\frac{r}{\ep}}dw \, \frac{\phi\(\xi +w\)+\phi\(\xi -w\)-2\phi(\xi) }{|w|^{1+2s}} \\
&\qquad \times\int_{0}^{\frac{R}{|w|}} 
 q^{n} \left[\frac{1}{ \left(1+q^2 \right)^{\frac{n+2s+2}{2}}} -  \frac{n+2s+2}{ \left(1+q^2 \right)^{\frac{n+2s+4}{2}}}\right]\,dq.
\end{align*}
Note that,   by estimate \eqref{eq:asymptotics for phi dot} for $\ddot\phi$ and Taylor's Theorem, for $|w|<|\xi|/2$,
$$|\phi\(\xi +w\)+\phi\(\xi -w\)-2\phi(\xi)|\leq C\frac{w^2}{1+|\xi|^{2s+1}}.$$
Thus,
\begin{align*}
&\int_\R d\xi\,\phi(\xi)\int_{\R}\frac{|\phi\(\xi +w\)+\phi\(\xi -w\)-2\phi(\xi)|}{|w|^{1+2s}} \, d w\\&
\quad\leq C \int_\R d\xi\, \phi(\xi)\left[\int_{\{|w|<\frac{\max\{|\xi|,1\}}{2}\}}\frac{|w|^{1-2s}}{1+|\xi|^{2s+1}}\,dw+\int_{\{|w|>\frac{\max\{|\xi|,1\}}{2}\}}\frac{1}{|w|^{1+2s}}\,dw\right]\\&\quad
\leq C\int_\R \left(\frac{1}{1+|\xi|^{4s-1}}+\frac{1}{1+|\xi|^{2s}}\right) \,d\xi
\leq C.
\end{align*}
Moreover, for any $M>1$,
\begin{align*}
 &\int_\R d\xi\, \phi(\xi)\int_{\{|w|>M\}}\frac{|\phi\(\xi +w\)+\phi\(\xi -w\)-2\phi(\xi)|}{|w|^{1+2s}}\,dw\\&
 \quad =\int_{-M}^Md\xi\,(\ldots)+\int_{\{|\xi|>M\}}d\xi\, (\ldots)\\&
 \quad \leq C\int_{-M}^Md\xi\, \int_{\{|w|>M\}}\frac{dw}{|w|^{1+2s}}+C\int_{\{|\xi|>M\}}\left(\frac{1}{1+|\xi|^{4s-1}}+\frac{1}{1+|\xi|^{2s}}\right) \,d\xi\\&
 \quad  \leq\frac{C}{M^{2s-1}}+\frac{C}{M^{2(2s-1)}}\leq \frac{C}{M^{2s-1}}.
\end{align*}
Therefore, for  $c_1$  defined in \eqref{finite-energy}, we have
\begin{align*}
&-\frac12\int_\R d\xi\, \phi(\xi)\int_{-\frac{r}{\ep}}^{\frac{r}{\ep}}\frac{\phi\(\xi +w\)+\phi\(\xi -w\)-2\phi(\xi)} {|w|^{1+2s}}  \, d w\\&
=-\frac12\int_\R d\xi\, \phi(\xi)\int_{-R^\frac12}^{R^\frac12}\frac{\phi\(\xi +w\)+\phi\(\xi -w\)-2\phi(\xi)} {|w|^{1+2s}} \, d w+ O(R^{-(s-\frac12)})+O\(\(\frac{\ep}{r}\)^{2s-1}\)\\ 
&=\frac12\int_\R\int_\R \frac{(\phi\(\xi +w\)-\phi(\xi))^2}{|w|^{1+2s}}\,dw\,d\xi+  O(R^{-(s-\frac12)})+O\(\(\frac{\ep}{r}\)^{2s-1}\)\\&
 =c_1+  O(R^{-(s-\frac12)})+O\(\(\frac{\ep}{r}\)^{2s-1}\). 
\end{align*}
Next, define $c_2$ by 
\begin{equation}\label{eq:c2}
c_2 := (n+2s) \int_{0} ^\infty  \left[   \frac{n+2s+2}{ \( 1+q^2 \)^{\frac{n+2s+4}{2}}}- \frac{1}{\(1+q^2\)^{\frac{n+2s+2}{2}}}\right]q^n\,dq.
\end{equation}
 For $|w|<R^\frac12$, we have
\begin{align*}
& \int_0^{\frac{R}{|w|}}
 \left[ \frac{n+2s+2}{ \left(1+q^2 \right)^{\frac{n+2s+4}{2}}}-\frac{1}{ \left(1+q^2 \right)^{\frac{n+2s+2}{2}} }\right]q^n\, dq\\
 &= \int_0^{\infty} 
 \left[\frac{n+2s+2}{ \left(1+q^2 \right)^{\frac{n+2s+4}{2}}} -\frac{1}{ \left(1+q^2 \right)^{\frac{n+2s+2}{2}}}\right]q^n\, dq
 	+O\(R^{-\frac{1+2s}{2}}\) = c_2 + O\(R^{-\frac{1+2s}{2}}\). 
\end{align*}
The previous estimates imply that
\begin{align*}
F&=-\ep^{1-2s} \frac{n+2s}{2}  \int_{S^{n-2}} d \theta\, A(\theta)\int_\R d\xi\, \phi(\xi)\int_{-R^\frac12}^{R^\frac12} dw \, \frac{\phi\(\xi +w\)+\phi\(\xi -w\)-2\phi(\xi) }{|w|^{1+2s}}
\\
&\qquad \times\int_0^{\frac{R}{|w|}} 
 q^{n} \left[   \frac{n+2s+2}{ \left(1+q^2 \right)^{\frac{n+2s+4}{2}}}-\frac{1}{ \left(1+q^2 \right)^{\frac{n+2s+2}{2}}}\right]\,dq 
 +\ep^{1-2s} (o_\ep(1)+ o_{R^{-1}}(1))\\&
 =-\ep^{1-2s} \frac{n+2s}{2}  \int_{S^{n-2}}d \theta\, A(\theta)\int_\R d\xi\, \phi(\xi)\int_{-R^\frac12}^{R^\frac12} dw \, \frac{\phi\(\xi +w\)+\phi\(\xi -w\)-2\phi(\xi) }{|w|^{1+2s}}
\\
&\qquad\times \int_0^{\infty} 
 q^{n} \left[ \frac{n+2s+2}{ \left(1+q^2 \right)^{\frac{n+2s+4}{2}}} -\frac{1}{ \left(1+q^2 \right)^{\frac{n+2s+2}{2}} }\right]\,dq +\ep^{1-2s}(o_\ep(1)+ o_{R^{-1}}(1))\\&
 =\ep^{1-2s}  \left(c_1c_2\int_{S^{n-2}} A(\theta)\,d \theta+o_\ep(1)+o_{R^{-1}}(1) \right).
\end{align*}
We conclude that,
\begin{equation}\label{eq:F-s-final}
\ep^{2s-1}F= c_1c_2\int_{S^{n-2}} A(\theta)\,d \theta+o_\ep(1) +o_{R^{-1}}(1). 
\end{equation}
It remains to check the errors terms.

\medskip
 \noindent
\underline{\bf Step 1a. Estimating $\ep^{2s-1} \int_{\R} \dot{\phi}(\xi) E_1(\xi) \, d \xi$ and $\ep^{2s-1} \int_{\R} \dot{\phi}(\xi) E_2(\xi) \, d \xi$}.
We will show that
\begin{equation}\label{eq:E_1-s-final}
\ep^{2s-1}\int_{\R} \dot{\phi}(\xi)E_1(\xi) \, d \xi= o_\ep(1)
\end{equation}
and
\begin{equation}\label{eq:E_2-s-final}
\ep^{2s-1}\int_{\R} \dot{\phi}(\xi)E_2(\xi) \, d \xi = o_r(1).
\end{equation}
Observe that
\begin{align*}
\int_{0}^{R\ep} \frac{dp}{p^{2s-1}}  \int_{ \frac{r}{p}}^{ \frac{r}{p}} \frac{dt}{ (t^2 +1)^{\frac{n+2s-2}{2}}}
	\leq \int_{0}^{R\ep} \frac{dp}{p^{2s-1}}  \int_{\R} \frac{dt}{ (t^2 +1)^{\frac{n+2s-2}{2}}} = C R^{2-2s} \ep^{2-2s}. 
\end{align*}
First, recalling \eqref{E_1-s}, we estimate
\begin{align*}
|E_1(\xi)|
&\leq \frac{C}{\ep^2}\int_{0}^{R\ep} R^2\ep^2 \frac{dp}{p^{2s-1}} \int_{-\frac{r}{\ep} }^{\frac{r}{\ep} }  \frac{dt}{ (t^2 +1)^{\frac{n+2s-2}{2}}} 
\leq  CR^{4-4s} \ep^{2-2s},
\end{align*}
so that \eqref{eq:E_1-s-final} holds. 
Next, recalling \eqref{E_2-s}, we have
\[
\ep^{2s-1} E_2(\xi) = O(rR^{2-2s}). 
\]
Choosing
\begin{equation}\label{eq:chooseR-gamma}
R = r^{-1},
\end{equation}
we arrive at \eqref{eq:E_2-s-final}.

\medskip
 \noindent
\underline{\bf Step 1b. Estimating $\ep^{2s-1}\int_\R E_3(\xi)\dot\phi(\xi)\,d\xi$ and $E_4$}.
We will show that
\begin{equation}\label{eq:E_3-E_4-s-final}
\ep^{2s-1}\int_\R E_3(\xi)\dot\phi(\xi)\,d\xi= o_\ep(1)
\end{equation}
and 
\begin{equation}\label{eq:E_4-s-final}
 \ep^{2s-1}E_4 = o_\ep(1). 
\end{equation}
First, recalling \eqref{E_3-s}, we write 
\begin{align*}
| E_3(\xi)|
&\leq C\int_{0}^{R\ep}
	\left[ \phi\(\xi + \frac{r}{\ep}\) -  \phi\(\xi - \frac{r}{\ep}\)\right]\frac{1}{ \left( \frac{r^2}{p^2} +1\right)^{\frac{n+2s}{2}}}  \frac{dp}{p^{2s}}\\
&\leq C\int_{0}^{R\ep} 
	\frac{p^{n+2s}}{r^{n+2s}} \frac{dp}{p^{2s}}\\
&= \frac{C}{r^{n+2s}}\int_{0}^{R\ep} p^n \, dp \\
&= \frac{C}{r^{n+2s}} (R\ep)^{n+1}  = o_\ep(1)
\end{align*}
which gives \eqref{eq:E_3-E_4-s-final}. 

Next, by \eqref{eq:asymptotics for phi}, for $|\xi|>2r/\ep$, since $H(\xi \pm\frac{r}{\ep})=H(\xi)$, 
$$\left|\phi\(\xi \pm\frac{r}{\ep}\)-\phi(\xi)\right|\leq \frac{C}{|\xi|^{2s}}.$$
Therefore, recalling  \eqref{E_4-s}, we have
\begin{align*}
|E_4|
&=\bigg|\ep (n+2s) \int_{0}^{R\ep} \frac{dp}{p^{1+2s}} \int_{S^{n-2}} d \theta\, A(\theta)\int_\R d\xi\, \phi(\xi)\\
&\qquad\times \left [ 
\(\phi\(\xi + \frac{r}{\ep}\)-\phi(\xi)\) 
+ \(\phi\(\xi -\frac{r}{\ep}\)-\phi(\xi)\) \right]\frac{ \frac{r}{p}}{ \left( \frac{r^2}{p^2} +1\right)^{\frac{n+2s+2}{2}}} \bigg|\\
&\leq C \ep  \int_{0}^{R\ep} \frac{dp}{p^{1+2s}}  \frac{p^{n+2s+1}}{r^{n+2s+1}}\left[\int_{-\frac{2r}{\ep}}^{\frac{2r}{\ep}}d\xi+\int_{\{|\xi|>\frac{2r}{\ep}\}} \frac{d\xi}{|\xi|^{2s}}\right]\\
&\leq   \frac{C \ep} {r^{n+2s+1}} \int_{0}^{R\ep} p^n\frac{r}{\ep}\,dp
 \leq\frac{C\ep^{n+1}R^{n+1}}{r^{n+2s}}
= o_\ep(1),
\end{align*}
which gives \eqref{eq:E_4-s-final}.

\medskip
 \noindent
\underline{\bf Completion of Step 1.} 
Recall from above that
\begin{align*}
\ep^{2s-1} \int_{\R} \dot{\phi}(\xi) I(\xi)\, d \xi
	&= \ep^{2s-1}\left[F + E_4 +  \int_{\R} \dot{\phi}(\xi) [E_1(\xi) + E_2(\xi) + E_3(\xi)]\, d \xi\right].
\end{align*}
Combining this with \eqref{eq:F-s-final},
\eqref{eq:E_1-s-final},
\eqref{eq:E_2-s-final}, \eqref{eq:chooseR-gamma},  
\eqref{eq:E_3-E_4-s-final} and  \eqref{eq:E_4-s-final}
gives \eqref{eq:I1new-final-s}. 

\medskip
\noindent
\underline{\bf Step 2. Estimating $\ep^{2s-1} \int_{\R} \dot{\phi}(\xi) I_2(\xi) \, d \xi$}. 
 We will show that 
\begin{equation}\label{eq:I3new-final-s}
\ep^{2s-1} \int_{\R} \dot{\phi}(\xi) I_2(\xi) \, d \xi=o_r(1).
\end{equation}
By the monotonicity of $\phi$ and recalling \eqref{bthetatrde},  we have
\begin{align*}
I_2(\xi)&\leq \int_{R\ep}^{r}\frac{dp}{p^{2s+1}}\int_{S^{n-2}}d\theta\int_{-\frac{r}{p}}^{\frac{r}{p}}\left \{ \phi\(\xi + \frac{tp }{\ep}+\frac{Cp^2}{\ep}(1+t^2)\) - \phi\( \xi + \frac{tp}{\ep}\)\right\} \frac{dt}{\left(t^2+1\right)^\frac{n+2s}{2}}\\&
=\int_{R\ep}^{r}\frac{dp}{p^{2s+1}}\int_{S^{n-2}}d\theta\int_{-\frac{r}{p}}^{\frac{r}{p}}\frac{dt}{\left(t^2+1\right)^\frac{n+2s}{2}}
\int_0^1 \dot\phi\(\xi + \frac{tp }{\ep}+\tau\frac{Cp^2}{\ep}(1+t^2)\)\frac{Cp^2}{\ep}(1+t^2)\,d\tau \\&
=\frac{C}{\ep}\int_{R\ep}^{r}\frac{dp}{p^{2s-1}}\int_{S^{n-2}}d\theta\int_{-\frac{r}{p}}^{\frac{r}{p}}\frac{dt}{\left(t^2+1\right)^\frac{n+2s-2}{2}}
\int_0^1 \dot\phi\(\xi + \frac{tp }{\ep}+\tau\frac{Cp^2}{\ep}(1+t^2)\)\,d\tau\\&
=\frac{C}{\ep}\int_{R\ep}^{r}\frac{dp}{p^{2s-1}}\int_{S^{n-2}}d\theta\int_0^1d\tau\\&
\qquad\times \int_{-\frac{r}{p}}^{\frac{r}{p}}\partial_t\left[\phi\(\xi + \frac{tp }{\ep}+\tau\frac{Cp^2}{\ep}(1+t^2)\)\right]\frac{\ep}{p+2\tau Cp^2t}\frac{dt}{\left(t^2+1\right)^\frac{n+2s-2}{2}}.
\end{align*}
Using that $p+2\tau Cp^2t\geq p-2\tau C r p\geq p/2>0$ for $|t|<r/p$ and $r$ small enough,  we integrate by parts to get
\begin{align*}
I_2(\xi)&\leq 
C\int_{R\ep}^{r}\frac{dp}{p^{2s}}\int_{S^{n-2}}d\theta\int_0^1d\tau\int_{-\frac{r}{p}}^{\frac{r}{p}}
\partial_t\left[\phi\(\xi + \frac{tp }{\ep}+\tau\frac{Cp^2}{\ep}(1+t^2)\)\right]\frac{dt}{\left(t^2+1\right)^\frac{n+2s-2}{2}}\\&
=C\int_{R\ep}^{r}\frac{dp}{p^{2s}}\int_{S^{n-2}}d\theta\int_0^1d\tau\left[
\phi\(\xi + \frac{tp }{\ep}+\tau\frac{Cp^2}{\ep}(1+t^2)\)\frac{1}{\left(t^2+1\right)^\frac{n+2s-2}{2}}\Bigg\vert_{t=-\frac{r}{p}}^{t=\frac{r}{p}}\right.\\&
\qquad\left. + (n+2s-2) \int_{-\frac{r}{p}}^{\frac{r}{p}}\phi\(\xi + \frac{tp }{\ep}+\tau\frac{Cp^2}{\ep}(1+t^2)\)\frac{t}{\left(t^2+1\right)^\frac{n+2s}{2}} \, dt\right]\\&
\leq C\int_{R\ep}^{r}\frac{dp}{p^{2s}}\leq C(R\ep)^{-(2s-1)}.
\end{align*}
Similarly, one can prove that
$$I_2(\xi)\geq -C(R\ep)^{-(2s-1)}.$$
We conclude that 
 \begin{equation*}
 \ep^{2s-1} I_2(\xi)=O(R^{-(2s-1)}).
\end{equation*}
Recalling \eqref{eq:chooseR-gamma}, the estimate \eqref{eq:I3new-final-s} follows.  

 \medskip
 
 \noindent
  
 \underline{\bf Conclusion}.
 Recalling \eqref{eq:I-split-s}, we combine \eqref{eq:I1new-final-s} and \eqref{eq:I3new-final-s} to finally obtain
\begin{equation}\label{eq:I-final-s}
\ep^{2s-1}\int_{\R} \dot{\phi}(\xi)G(\xi) \, d\xi
	= c_1c_2\int_{S^{n-2} } A(\theta ) \, d \theta + o_\ep(1)+ o_r(1),
\end{equation}
where $o_\ep(1)$ depends on the parameter $r$. 
From \eqref{eq:aebarI-s} and  \eqref{eq:I-final-s}, we first send $\ep\to0$ and then $r\to0$ to arrive at
\[
\lim_{\ep \to 0} \bar{a}_\ep(x)
	=   c_1c_2 \int_{S^{n-2}} A(\theta) \, d \theta,
\]
uniformly in $Q_\rho$. Recalling \eqref{eq:Atheta-MC}, this gives the desired result with $c_\star = c_1c_2$. 

Lastly, we rewrite $c_2$ in \eqref{eq:c2} to show that $c_\star$ can be written as \eqref{eq:c-star}. 
Since
\[
-\frac{1}{q} \frac{d}{dq} \left[ \frac{1}{\(1+q^2\)^{\frac{n+2s+2}{2}}} \right]
	= \frac{n+2s+2}{\(1+q^2\)^{\frac{n+2s+4}{2}}} \quad \hbox{ for}~q>0,
\]
we can write
\begin{align*}
c_2
	&= (n+2s) \int_{0} ^\infty q^n \left[   - \frac{1}{q} \frac{d}{dq} \left[ \frac{1}{ \( 1+q^2 \)^{\frac{n+2s+2}{2}}}\right]- \frac{1}{\(1+q^2\)^{\frac{n+2s+2}{2}}}\right]\,dq.
\end{align*}
Integrating by parts, we have
\begin{align*}
-\int_{0}^\infty q^{n-1} \frac{d}{dq} \left[ \frac{1}{ \( 1+q^2 \)^{\frac{n+2s+2}{2}}}\right]\,dq
 	&=\int_{0}^\infty (n-1)q^{n-2} \frac{1}{ \( 1+q^2 \)^{\frac{n+2s+2}{2}}}\,dq,
\end{align*}
so that
\begin{align*}
c_2
	&= (n+2s) \int_{0} ^\infty  \left[ (n-1)q^{n-2} \frac{1}{ \( 1+q^2 \)^{\frac{n+2s+2}{2}}}- \frac{q^n}{\(1+q^2\)^{\frac{n+2s+2}{2}}}\right]\,dq\\
	&= (n+2s) \int_{0} ^\infty  \frac{q^{n-2}((n-1)-q^2)} {\(1+q^2\)^{\frac{n+2s+2}{2}}}\,dq.
\end{align*}
Recalling \eqref{finite-energy}, this gives \eqref{eq:c-star} for $c_\star = c_1c_2$. 
\qed

\section{Proof of Theorem \ref{lem:4} for $s \in (0,\frac12)$}\label{sec:s-small}

Throughout this section, we assume that $s \in (0,\frac12)$.

In what follows we denote, as usual, $y=(y',y_n)$ with $y'\in\R^{n-1}$. 
Moreover, we will make  the change of variables $z=Ty$ where 
 $T$ is an orthonormal matrix such that 
\begin{equation}\label{changevar_s<12} \nabla d(x) \cdot (Ty)=y_n.
\end{equation}
We start with some  preliminary results. 
 \begin{lem}\label{lemms<1/2_1}Let  $s \in (0,\frac12)$. Then there exists $C>0$ such that for all  $R,\,\tau>0$, 
 \begin{equation}\label{lemms<1/2_1_est}\int_{\{|y_n|<\tau,\,|y_n|<R|y'|^2\}}\frac{dy}{|y|^{n+2s}}\leq CR^\frac{1+2s}{2}\tau^{\frac{1-2s}{2}}.
 \end{equation}
 \end{lem}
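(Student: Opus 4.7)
I would reduce the $n$-dimensional integral to a one-dimensional one by integrating out $y'\in\R^{n-1}$, and then split the remaining $y_n$-integral into two regimes according to the size of $R|y_n|$. The condition $|y_n|<R|y'|^2$ is equivalent to $|y'|>\sqrt{|y_n|/R}$, so by Fubini
\[
\int_{\{|y_n|<\tau,\,|y_n|<R|y'|^2\}}\frac{dy}{|y|^{n+2s}}
=\int_{-\tau}^\tau \left(\int_{\{|y'|>\sqrt{|y_n|/R}\}}\frac{dy'}{(|y'|^2+y_n^2)^{(n+2s)/2}}\right)dy_n.
\]
For fixed $y_n\neq 0$, I would make the scaling $y'=|y_n|z'$, which factors out $|y_n|^{-(1+2s)}$ and turns the lower bound $\sqrt{|y_n|/R}$ into $1/\sqrt{R|y_n|}$. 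This rewrites the integral as
\[
\int_{-\tau}^\tau \frac{F\bigl(1/\sqrt{R|y_n|}\bigr)}{|y_n|^{1+2s}}\,dy_n,
\qquad F(a):=\int_{\{|z'|>a\}}\frac{dz'}{(|z'|^2+1)^{(n+2s)/2}}.
\]

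Next I would analyze $F$. Passing to polar coordinates in $\R^{n-1}$, the integrand is $\rho^{n-2}(\rho^2+1)^{-(n+2s)/2}$, which is integrable at $0$ (since $n\geq 2$) and decays like $\rho^{-(2+2s)}$ at infinity. Hence $F(0)<\infty$ and $F(a)=O(a^{-(1+2s)})$ as $a\to\infty$, giving a uniform estimate of the form $F(a)\leq C\min(1,\,a^{-(1+2s)})$ for all $a\geq 0$.

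Finally, I would split the $y_n$-integral at the natural threshold $|y_n|=1/R$. On the set $\{|y_n|\leq 1/R\}$ the argument of $F$ is $\geq 1$, so $F\leq C(R|y_n|)^{(1+2s)/2}$, and a direct integration yields a contribution of order
\[
R^{(1+2s)/2}\,\min(\tau,1/R)^{(1-2s)/2}\leq C\,R^{(1+2s)/2}\tau^{(1-2s)/2},
\]
using that $(1-2s)/2>0$ so the singularity at $y_n=0$ is integrable. On the set $\{1/R<|y_n|<\tau\}$ (which is nonempty only if $\tau>1/R$), I would use $F\leq C$ to bound the contribution by $C\int_{1/R}^\tau y_n^{-(1+2s)}\,dy_n\leq CR^{2s}$.

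\textbf{Main obstacle.} The only slightly subtle point is checking that the large-$|y_n|$ contribution $R^{2s}$ is absorbed into the target bound: since this case occurs only when $R\tau\geq 1$, one has $R^{2s}=R^{(1+2s)/2}R^{-(1-2s)/2}\leq R^{(1+2s)/2}\tau^{(1-2s)/2}$, so both regimes yield the same estimate and combine to give \eqref{lemms<1/2_1_est}. Everything else is pure scaling.
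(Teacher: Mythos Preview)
Your proof is correct and follows the same scaling approach as the paper: Fubini, then the substitution $y'=|y_n|z'$ to extract the factor $|y_n|^{-(1+2s)}$ and reduce to studying the tail function $F(a)=\int_{\{|z'|>a\}}(|z'|^2+1)^{-(n+2s)/2}\,dz'$.

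The only difference is that you split into two regimes and use $F(a)\leq C\min(1,a^{-(1+2s)})$, whereas the paper observes that the single bound $F(a)\leq Ca^{-(1+2s)}$ holds for \emph{all} $a>0$, simply because $(|z'|^2+1)^{-(n+2s)/2}\leq |z'|^{-(n+2s)}$ pointwise. Plugging this in gives directly
\[
\int_{-\tau}^\tau \frac{F(1/\sqrt{R|y_n|})}{|y_n|^{1+2s}}\,dy_n
\leq CR^{\frac{1+2s}{2}}\int_{-\tau}^\tau |y_n|^{-\frac{1+2s}{2}}\,dy_n
= CR^{\frac{1+2s}{2}}\tau^{\frac{1-2s}{2}},
\]
so your case split and the verification that $R^{2s}\leq R^{(1+2s)/2}\tau^{(1-2s)/2}$ when $R\tau\geq 1$ are unnecessary (though correct). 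Both arguments yield the same estimate; the paper's is just a bit more streamlined.
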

 \begin{proof}
Making a change of variables in $y'$, we compute
\begin{equation*}
\begin{split}
\int_{\{|y_n|<\tau,\,|y_n|<R|y'|^2\}}\frac{dy}{|y|^{n+2s}}
&=\int_{\{|y_n|<\tau\}}\frac{dy_n}{|y_n|^{n+2s}}\int_{\{|y'|>|y_n|^{\frac12}R^{-\frac12}\}}\frac{dy'}{\left(\frac{|y'|^2}{|y_n|^2}+1\right)^\frac{n+2s}{2}}\\&
=\int_{\{|y_n|<\tau\}}\frac{dy_n}{|y_n|^{1+2s}}\int_{\{|z'|>|y_n|^{-\frac12}R^{-\frac12}\}}\frac{dz'}{\left(|z'|^2+1\right)^\frac{n+2s}{2}}\\&
\leq \int_{\{|y_n|<\tau\}}\frac{dy_n}{|y_n|^{1+2s}}\int_{\{|z'|>|y_n|^{-\frac12}R^{-\frac12}\}}\frac{dz'}{|z'|^{n+2s}}\\&
=CR^\frac{1+2s}{2}\int_{\{|y_n|<\tau\}}\frac{dy_n}{|y_n|^\frac{1+2s}{2}}\\&
=CR^\frac{1+2s}{2}\tau^\frac{1-2s}{2}.
\end{split}
\end{equation*}
\end{proof}
   
\begin{lem}\label{lemm2s<1/2_1}
Let  $s \in (0,\frac12)$. There exist $\tau_0,\,R>0$ such that for all $0<\tau\leq \tau_0$, $0\leq\sigma<\tau/2$ and $x\in Q_\rho$,
\begin{align}\label{lemm2s<1/2_est1}\int_{\{d(x+z) > d(x)-\sigma,~-\tau<\nabla d(x) \cdot z < -2\sigma\}}\frac{dz}{|z|^{n+2s}}\leq \int_{ \{|y_n|<\tau,\, |y_n|<R|y'|^2\}}\frac{dy}{|y|^{n+2s}},
\end{align}
and 
\begin{align}\label{lemm2s<1/2_est2}\int_{\{d(x+z) < d(x)+\sigma,~2\sigma<\nabla d(x) \cdot z  <\tau\}}\frac{dz}{|z|^{n+2s}}\leq \int_{ \{|y_n|<\tau,\, |y_n|<R|y'|^2\}}\frac{dy}{|y|^{n+2s}}.
\end{align}
\end{lem}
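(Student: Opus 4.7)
The plan is to use the change of variables $z = Ty$ from \eqref{changevar_s<12}, under which $\nabla d(x)\cdot z = y_n$ and $|z|=|y|$, and to reduce each inequality to a set-theoretic inclusion of the LHS region of integration (expressed in $y$) into $\{|y_n|<\tau,\,|y_n|<R|y'|^2\}$. Since $dz/|z|^{n+2s} = dy/|y|^{n+2s}$, monotonicity of the integral then yields the claim.

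For \eqref{lemm2s<1/2_est1}, after the substitution the constraint $-\tau < \nabla d(x)\cdot z < -2\sigma$ becomes $-\tau < y_n < -2\sigma$. In particular $|y_n|<\tau$, and since $\sigma<\tau/2$ gives $-y_n > 2\sigma$, one obtains the elementary bound $-y_n-\sigma \geq |y_n|/2$. To promote this to $|y_n| \leq R|y'|^2$, I Taylor-expand $d$. Because $d$ is smooth in $Q_{2\rho}$ and $Te_n=\nabla d(x)$ is a null eigenvector of $D^2d(x)$ (recall $D^2d(x)\nabla d(x)=0$ in $Q_\rho$), the quadratic form $\frac12 D^2d(x)Ty\cdot Ty$ depends only on $y'$, so one has
\begin{equation*}
d(x+Ty)-d(x) = y_n + Q(y') + O(|y|^3),\qquad |Q(y')|\leq C_0|y'|^2,
\end{equation*}
uniformly in $x\in Q_\rho$ for $|y|$ bounded. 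Substituting into $d(x+z)>d(x)-\sigma$ gives
\begin{equation*}
|y_n|/2 \;\leq\; -y_n-\sigma \;\leq\; C_0|y'|^2 + C_1|y|^3.
\end{equation*}

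To finish, I would split by the size of $|y'|$. When $|y'|\leq\rho/2$ and $\tau\leq\tau_0$, the vector $y$ has $|y|\leq\rho$, so
\begin{equation*}
|y|^3 \;=\; |y'|^2|y| + y_n^2|y| \;\leq\; \rho\,|y'|^2 + \rho\,\tau_0\,|y_n|;
\end{equation*}
choosing $\tau_0$ small enough that $C_1\rho\,\tau_0\leq 1/4$ absorbs the last term on the left and yields $|y_n|\leq R_1|y'|^2$ with $R_1:=4(C_0+C_1\rho)$. When $|y'|\geq\rho/2$, the constraint $|y_n|<\tau\leq\tau_0$ forces $|y_n|<R|y'|^2$ automatically as soon as $R\geq 4\tau_0/\rho^2$. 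Taking $R:=\max\{R_1,4\tau_0/\rho^2\}$ completes the inclusion and proves \eqref{lemm2s<1/2_est1}. The second inequality \eqref{lemm2s<1/2_est2} is entirely symmetric: the constraints become $2\sigma<y_n<\tau$ together with $y_n+Q(y')+O(|y|^3)<\sigma$, from which $|y_n|/2 \leq y_n-\sigma \leq -Q(y')+O(|y|^3) \leq C_0|y'|^2+C_1|y|^3$, and the same case analysis applies with the same $\tau_0$ and $R$. The only subtlety is the large-$|y'|$ regime, where the Taylor expansion is not valid; there, however, the inclusion is automatic once $R$ is taken sufficiently large relative to $\tau_0/\rho^2$, so this is not a serious obstacle.
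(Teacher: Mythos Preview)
Your proof is correct and follows the same overall strategy as the paper: reduce the inequality to a set inclusion via the orthogonal change of variables $z=Ty$, then control the relevant remainder by choosing $\tau_0$ small. The execution differs in one respect worth noting.

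You push the Taylor expansion to third order, using $D^2d(x)\nabla d(x)=0$ to write $d(x+Ty)-d(x)=y_n+Q(y')+O(|y|^3)$; this expansion is only valid for $|y|\lesssim\rho$, which forces the case split on $|y'|$ and the separate ``automatic'' argument for large $|y'|$. The paper instead uses the cruder, but \emph{globally valid}, second-order remainder bound
\[
|d(x+z)-d(x)-\nabla d(x)\cdot z|\leq C|z|^2\qquad\text{for all }z\in\R^n,
\]
obtained from $d\in C^2(Q_{2\rho})$ for small $z$ and from the Lipschitz bound $|d(x+z)-d(x)|\leq|z|$ for large $z$. This immediately gives the inclusion into $\{|y_n|<\tau,\ |y_n|<C(|y'|^2+|y_n|^2)\}$ with no restriction on $|y'|$; then $|y_n|^2\leq\tau|y_n|$ absorbs the last term exactly as you absorb $C_1\rho\tau_0|y_n|$. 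The net effect is the same, but the paper's route avoids invoking the finer structure of $D^2d$ and the case analysis. Your use of $D^2d(x)\nabla d(x)=0$ buys nothing here, since the cruder quadratic bound already suffices.
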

\begin{proof}
 Since $d\in C^2(Q_{2\rho})$ and Lipschitz continuous in $\R^n$, there exists $C>0$ such that, for all  $x\in Q_\rho$
and $z\in\R^n$,
 $$|d(x+z)-d(x)-\nabla d(x) \cdot z|\leq C|z|^2,$$
 so that 
 \begin{align*}&\{z  : d(x+z) >d(x)-\sigma,~-\tau<\nabla d(x) \cdot z < -2\sigma\}\\&=\{z: d(x+z)-d(x)-\nabla d(x) \cdot z> -\sigma-\nabla d(x) \cdot z>-\frac{\nabla d(x) \cdot z}{2}>\sigma,\,\nabla d(x) \cdot z>-\tau\}\\&
 \subseteq \{z : |\nabla d(x) \cdot z|<\tau,\,|\nabla d(x) \cdot z|< C|z|^2\}.
 \end{align*}
Then, performing the change of variables $z=Ty$ with $T$ as in \eqref{changevar_s<12}, we get
\begin{align*}
\int_{\{d(x+z) > d(x)-\sigma,~-\tau<\nabla d(x) \cdot z < -2\sigma\}}\frac{dz}{|z|^{n+2s}}&\leq \int_{\{  |\nabla d(x) \cdot z|<\tau,\,|\nabla d(x) \cdot z|< C|z|^2\}}\frac{dz}{|z|^{n+2s}}\\&
= \int_{\{ |y_n|<\tau,\,|y_n|< C(|y'|^2+|y_n|^2)\}}\frac{dy}{|y|^{n+2s}}\\&
\leq \int_{\{|y_n|<\tau,\,|y_n|< C(|y'|^2+\tau|y_n|)\}}\frac{dy}{|y|^{n+2s}}.
 \end{align*}
 Let $\tau_0>0$ be such that $1-C\tau_0=\frac12$, then for all $\tau\leq\tau_0$,
 \begin{align*}
 \int_{\{|y_n|<\tau,\, |y_n|< C(|y'|^2+\tau|y_n|)\}}\frac{dy}{|y|^{n+2s}}&\leq  \int_{\{|y_n|<\tau,\, |y_n|< C(|y'|^2+\tau_0|y_n|)\}}\frac{dy}{|y|^{n+2s}}\\&
 = \int_{\{|y_n|<\tau,\, |y_n|< R |y'|^2\}}\frac{dy}{|y|^{n+2s}},
  \end{align*}
  where $R=2C$. This proves \eqref{lemm2s<1/2_est1}. Estimate \eqref{lemm2s<1/2_est2} follows with a similar argument. 
 \end{proof}

The following well-known result, see \cite[Lemma 1]{Imbert09}, is a consequence of Lemmas \ref{lemms<1/2_1}  and \ref{lemm2s<1/2_1}. 
\begin{prop}\label{props<1/2_1}
Let $s \in (0,\frac12)$ and  $x\in Q_\rho$. Then 
   the two following quantities 
 \begin{align*}
\kappa^+[x, d] &=
\nu(\{z : d(x+z) >d(x),~\nabla d(x) \cdot z < 0\}), \\
	\kappa^-[x, d] &= \nu(\{z : d(x+z) < d(x),~\nabla d(x) \cdot z > 0\})
\end{align*}
 are finite. 
 \end{prop}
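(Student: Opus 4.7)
The plan is to observe that the proposition is an essentially immediate consequence of Lemmas \ref{lemms<1/2_1} and \ref{lemm2s<1/2_1}. I would split each of the two sets into a near-origin piece, controlled by Lemma \ref{lemm2s<1/2_1}, and a far-from-origin piece, where the singular integrand is harmless. The splitting parameter is precisely the $\tau_0$ supplied by Lemma \ref{lemm2s<1/2_1}.

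For the far-from-origin part: since $|\nabla d(x)|=1$ in $Q_\rho$, the condition $|\nabla d(x)\cdot z|\geq \tau_0$ forces $|z|\geq \tau_0$. Hence
\[
\nu(\{z : d(x+z)>d(x),\,\nabla d(x)\cdot z \leq -\tau_0\})
\;\leq\; \int_{\{|z|\geq \tau_0\}}\frac{dz}{|z|^{n+2s}}
\;<\;\infty,
\]
and the analogous contribution to $\kappa^-[x,d]$ is controlled identically.

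For the near-origin part, I would apply Lemma \ref{lemm2s<1/2_1} with $\sigma=0$ and $\tau=\tau_0$, which gives
\[
\nu(\{z : d(x+z)>d(x),\,-\tau_0<\nabla d(x)\cdot z<0\})
\;\leq\;\int_{\{|y_n|<\tau_0,\,|y_n|<R|y'|^2\}}\frac{dy}{|y|^{n+2s}},
\]
and the right-hand side is at most $CR^{(1+2s)/2}\tau_0^{(1-2s)/2}$ by Lemma \ref{lemms<1/2_1}, which is finite precisely because $s<\tfrac12$ makes the exponent $(1-2s)/2$ positive. Combining with the far-field bound yields $\kappa^+[x,d]<\infty$, and the same argument applied to \eqref{lemm2s<1/2_est2} yields $\kappa^-[x,d]<\infty$.

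There is essentially no obstacle beyond bookkeeping: the only thing to check is that the constants $\tau_0$ and $R$ produced by Lemma \ref{lemm2s<1/2_1} can be chosen uniformly for $x\in Q_\rho$, which is immediate from assumption \eqref{eq:d-assumption} (uniform $C^2$ bounds for $d$ on $Q_{2\rho}$ plus the global Lipschitz property). The restriction $s<\tfrac12$ is used in exactly one place, namely the convergence of the integral in \eqref{lemms<1/2_1_est} at $y_n=0$; if $s\geq \tfrac12$ this step fails, which is consistent with the well-known fact that the fractional mean curvature $\kappa[x,d]$ is \emph{not} finite in that regime and must be replaced by the classical mean curvature, as indeed done in Theorem \ref{lem:4}.
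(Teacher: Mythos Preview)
Your proof is correct and follows exactly the route the paper indicates: the paper does not spell out an argument but simply states that the proposition is a consequence of Lemmas \ref{lemms<1/2_1} and \ref{lemm2s<1/2_1} (citing \cite[Lemma 1]{Imbert09}), and your splitting into the near-origin piece handled by those two lemmas with $\sigma=0$, $\tau=\tau_0$ and the far piece handled by the trivial bound $|z|\geq\tau_0$ is precisely how one fills in the details.
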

  Let us proceed with the proof of Theorem \ref{lem:4}.  
  We begin by writing 
 \begin{equation}\begin{split}\label{aepsplits<1/2}
\frac{ a_{\ep}(\xi; x)}{\ep^{2s}}&= \int_{\R^n} \( \phi\(\xi + \frac{d(x+ z)- d(x)}{\ep}\) - \phi\( \xi + \frac{\nabla d(x) \cdot z}{\ep}\) \) \frac{dz}{\abs{z}^{n+2s}}\\&
=\int_{\{ d(x+z) > d(x),~\nabla d(x) \cdot z < 0\}}(\ldots)+\int_{\{d(x+z) < d(x),~\nabla d(x) \cdot z > 0\}}(\ldots)\\&
\quad+\int_{\{d(x+z) > d(x),~\nabla d(x) \cdot z > 0\}}(\ldots)+\int_{\{d(x+z) < d(x),~\nabla d(x) \cdot z < 0\}}(\ldots)\\&
=:I_1(\xi)+I_2(\xi)+I_3(\xi)+I_4(\xi).
 \end{split}
\end{equation}

 \medskip
 
 \noindent
\underline{\bf Step 1: Estimating $\int_\R\dot\phi(\xi) I_1(\xi)\,d\xi$ and $\int_\R\dot \phi(\xi) I_2(\xi)\,d\xi$.} 
We will show that 
\begin{equation}\label{I_1limts<1/2}\lim_{\ep \to 0}\int_\R\dot \phi(\xi) I_1(\xi)\,d\xi=\kappa^+[x, d],
\end{equation} 
and 
\begin{equation}\label{I_2limts<1/2}\lim_{\ep \to 0}\int_\R\dot \phi(\xi) I_2(\xi)\,d\xi=-\kappa^-[x, d].
\end{equation} 
For $\delta>0$, we write
\begin{equation}\begin{split}\label{I_1s<1/2} I_1(\xi)&=\int_{\{ d(x+z) -d(x)>\delta ,~\nabla d(x) \cdot z < -\delta\}}  (\ldots)
\\&\quad+\int_{\{d(x+z) -d(x)>0 ,~-\delta< \nabla d(x) \cdot z < 0\}} (\ldots)
\\&\quad+\int_{\{0<d(x+z) -d(x)<\delta ,~\nabla d(x) \cdot z < -\delta\}} (\ldots)\\&
=:J_1(\xi)+J_2(\xi)+J_3(\xi).
\end{split}
\end{equation}
We first estimate  $\int_\R\dot\phi(\xi) J_1(\xi)\,d\xi$.  By \eqref{eq:asymptotics for phi}, for $|\xi|<\delta/(2\ep)$ and $z\in\R^n$ such that $d(x+z) -d(x)>\delta$ and  $\nabla d(x) \cdot z < -\delta$,
we have
\begin{align*} 
&\phi\(\xi + \frac{d(x+ z)- d(x)}{\ep}\) - \phi\( \xi + \frac{\nabla d(x) \cdot z}{\ep}\)\\&=H\(\xi + \frac{d(x+ z)- d(x)}{\ep}\)-H\( \xi + \frac{\nabla d(x) \cdot z}{\ep}\)
\\&\quad+O\(\left|\xi + \frac{d(x+ z)- d(x)}{\ep}\right|^{-2s}\)+O\(\left|\xi+ \frac{\nabla d(x) \cdot z}{\ep}\right|^{-2s}\)\\&
= 1+O\(\frac{\ep^{2s}}{\delta^{2s}}\). 
\end{align*}
Consequently, by 
\eqref{phi-infyinftyaeptermxiestbis}, 
\begin{align*} 
\int_{-\frac{\delta}{2\ep}}^{\frac{\delta}{2\ep}}\dot\phi(\xi) J_1(\xi)\,d\xi=\int_{\{d(x+z) -d(x)>\delta ,~\nabla d(x) \cdot z < -\delta\}} \frac{dz}{|z|^{n+2s}}+O\(\frac{\ep^{2s}}{\delta^{2s}}\).
\end{align*}
Moreover, by 
\eqref{phi-infyinftyaeptermxiest} and Proposition \ref{props<1/2_1},
\begin{align*} 
\left|\int_{\{|\xi|>\frac{\delta}{2\ep}\}}\dot\phi(\xi) J_1(\xi)\,d\xi\right|\leq 2\int_{\{|\xi|>\frac{\delta}{2\ep}\}}d\xi\,\dot\phi(\xi) 
\int_{\{d(x+z) -d(x)>0 ,~\nabla d(x) \cdot z < 0\}} \frac{dz}{|z|^{n+2s}}=O\(\frac{\ep^{2s}}{\delta^{2s}}\).
\end{align*}
From the last two estimates, we infer that 
\begin{equation}\label{J1_step1_s<1/2}
\int_\R\dot\phi(\xi)J_1(\xi)\,d\xi=\int_{\{d(x+z) -d(x)>0 ,~\nabla d(x) \cdot z < 0\}} \frac{dz}{|z|^{n+2s}}+O\(\frac{\ep^{2s}}{\delta^{2s}}\)+o_\delta(1).
\end{equation}
Next, by \eqref{lemm2s<1/2_est1} with $\sigma=0$ and $\tau=\delta$, for $\delta$ small enough, and  by \eqref{lemms<1/2_1_est}, 
\begin{equation}\label{J2_step1_s<1/2}
\int_\R\dot\phi(\xi) J_2(\xi)\,d\xi=O\(\delta^{\frac{1-2s}{2}}\). 
\end{equation}
Finally, by Proposition \ref{props<1/2_1},
\begin{align*} 
|J_3(\xi)|&\leq 2\int_{\{d(x+z) -d(x)>0 ,~\nabla d(x) \cdot z < 0\}} \one_{\{0< d(x+z) -d(x)<\delta\}}(z) \,\frac{dz}{|z|^{n+2s}}\\&\leq 2 \int_{\{d(x+z) -d(x)>0 ,~\nabla d(x) \cdot z < 0\}}
 \frac{dz}{|z|^{n+2s}}\leq C.
\end{align*}
Since for $x\in Q_\rho$, the set $\{z:d(z)=d(x)\}$ is a smooth surface, we have that 
\begin{equation}\label{chiclosetogamma}\one_{\{0< d(x+z) -d(x)<\delta\}}(z)\to0\quad\text{ a.e. as }\delta\to0.\end{equation}
Therefore, by the Dominated Convergence Theorem, $J_3(\xi)=o_\delta(1)$ and 
\begin{equation}\label{J3_step1_s<1/2}
\int_\R\dot\phi(\xi) J_3(\xi)\,d\xi=o_\delta(1).  
\end{equation}
From \eqref{I_1s<1/2}, \eqref{J1_step1_s<1/2}, \eqref{J2_step1_s<1/2} and \eqref{J3_step1_s<1/2}, letting first $\ep\to0$ and then $\delta\to0$, \eqref{I_1limts<1/2} follows.
The limit in  \eqref{I_2limts<1/2} can be proven with a similar argument. 

 \medskip
 
 \noindent
\underline{\bf Step 2: Estimating $\int_\R\dot\phi(\xi) I_3(\xi)\,d\xi$ and $\int_\R\dot \phi(\xi) I_4(\xi)\,d\xi$.} 
We will show that 
\begin{equation}\label{I_3limts<1/2}\lim_{\ep \to 0}\int_\R\dot \phi(\xi) I_3(\xi)\,d\xi=0,
\end{equation} 
and 
\begin{equation}\label{I_4limts<1/2}\lim_{\ep \to 0}\int_\R\dot\phi(\xi) I_4(\xi)\,d\xi=0.
\end{equation} 
For $\delta>0$, we write
\begin{equation}\begin{split}\label{I_3s<1/2} I_3(\xi)&=\int_{\{z \,:\, d(x+z) -d(x)>\delta ,~\nabla d(x) \cdot z >2\delta\}}  (\ldots)
\\&\quad+\int_{\{d(x+z) -d(x)>0 ,~0< \nabla d(x) \cdot z < 2\delta \}} (\ldots)
\\&\quad+\int_{\{0<d(x+z) -d(x)<\delta ,~\nabla d(x) \cdot z >2\delta\}} (\ldots)\\&
=: J_1(\xi)+J_2(\xi)+J_3(\xi).
\end{split}
\end{equation}
We first estimate  $\int_\R\dot\phi(\xi) J_1(\xi)\,d\xi$.  By \eqref{eq:asymptotics for phi}, for $|\xi|<\delta/(2\ep)$ and $z\in\R^n$ such that $d(x+z) -d(x)>\delta$ and  $\nabla d(x) \cdot z > 2\delta$,
\begin{align*} 
&\phi\(\xi + \frac{d(x+ z)- d(x)}{\ep}\) - \phi\( \xi +\frac{\nabla d(x) \cdot z}{\ep}\)\\&=H\(\xi + \frac{d(x+ z)- d(x)}{\ep}\)-H\( \xi + \frac{\nabla d(x) \cdot z}{\ep}\)
\\&\quad+O\(\left|\xi + \frac{d(x+ z)- d(x)}{\ep}\right|^{-2s}\)+O\(\left|\xi+ \frac{\nabla d(x) \cdot z}{\ep}\right|^{-2s}\)\\&
= O\(\frac{\ep^{2s}}{\delta^{2s}}\). 
\end{align*}
Therefore,  for $|\xi|<\delta/(2\ep)$ and performing the change of variables $z=Ty$ with $T$ as in \eqref{changevar_s<12}, 
\begin{align*} |J_1(\xi)|\leq O\(\frac{\ep^{2s}}{\delta^{2s}}\)\int_{\{\nabla d(x) \cdot z>2\delta\}}\frac{dz}{|z|^{n+2s}}
\leq  O\(\frac{\ep^{2s}}{\delta^{2s}}\)\int_{\{|y|>2\delta\}}\frac{dy}{|y|^{n+2s}}=O\(\frac{\ep^{2s}}{\delta^{4s}}\),
\end{align*}
which, together with \eqref{phi-infyinftyaeptermxiestbis},  implies
$$\int_{-\frac{\delta}{2\ep}}^{\frac{\delta}{2\ep}}\dot\phi(\xi)J_1(\xi)\,d\xi=O\(\frac{\ep^{2s}}{\delta^{4s}}\).
$$
Moreover, for all $\xi\in\R^n$,
\begin{align*}
|J_1(\xi)|\leq \int_{\{|y|>2\delta\}}\frac{2}{|y|^{n+2s}}\,dy=O\(\frac{1}{\delta^{2s}}\),
\end{align*}
so that, by 
\eqref{phi-infyinftyaeptermxiest}, 
\begin{align*} 
\int_{\{|\xi|>\frac{\delta}{2\ep}\}}\dot\phi(\xi)J_1(\xi)\,d\xi=O\(\frac{\ep^{2s}}{\delta^{4s}}\).
\end{align*}
We infer that 
\begin{equation}\label{J1_step2_s<1/2}
\int_\R\dot\phi(\xi)J_1(\xi)\,d\xi=O\(\frac{\ep^{2s}}{\delta^{4s}}\).
\end{equation}
Next, let us estimate   $\int_\R\dot\phi(\xi) J_2(\xi)\,d\xi$. By the monotonicity of $\phi$, making the change of variables $z=Ty$ with $T$ as in \eqref{changevar_s<12},
and then taking $p=|y'|$, $t=y_n/p$, 
 we estimate
 \begin{align*}
 J_2(\xi)&\leq \int_{\{0<\nabla d(x) \cdot z <2\delta\}} 
 \( \phi\(\xi +  \frac{\nabla d(x) \cdot z+C|z|^2}{\ep}\) - \phi\( \xi + \frac{\nabla d(x) \cdot z}{\ep}\) \) \frac{dz}{\abs{z}^{n+2s}}\\&
 =\int_{\{0<y_n <2\delta\}} 
 \( \phi\(\xi +  \frac{y_n+C(|y'|^2+y_n^2)}{\ep}\) - \phi\( \xi +\frac{ y_n}{\ep}\) \) \frac{dy}{\abs{y}^{n+2s}}\\&
 =\mathcal{H}^{n-2}(S^{n-2})\int_0^\infty \frac{dp}{p^{1+2s}}\int_0^{\frac{2\delta}{p}}\( \phi\(\xi +  \frac{tp+Cp^2(1+t^2)}{\ep}\) - \phi\( \xi +\frac{ tp}{\ep}\) \) \frac{dt}{(1+t^2)^\frac{n+2s}{2}}\\&
 =\int_0^r \frac{dp}{p^{1+2s}}\,(\ldots)+\int_r^\infty \frac{dp}{p^{1+2s}}\,(\ldots)\\&
 =: J_2^1(\xi)+J_2^2(\xi),
 \end{align*}
 with $r\geq\delta$ to be determined. For the first term above, we have
 \begin{align*}
 &J_2^1(\xi)\\
 &=\frac{C}{\ep}\int_0^r dp\,p^{1-2s} \int_0^{\frac{2\delta}{p}} \frac{dt}{(1+t^2)^\frac{n+2s-2}{2}}\int_0^1\dot\phi\(\xi +  \frac{tp+\tau Cp^2(1+t^2)}{\ep}\) d\tau\\&
 =\frac{C}{\ep}\int_0^r dp\,p^{1-2s} \int_0^{\frac{2\delta}{p}} \frac{dt}{(1+t^2)^\frac{n+2s-2}{2}}\int_0^1\partial_t \left[\phi\(\xi +  \frac{tp+\tau Cp^2(1+t^2)}{\ep}\)\right]
 \frac{\ep}{p(1+2t\tau Cp)} d\tau\\&
 \leq C\int_0^r \frac{dp}{p^{2s}} \int_0^1d\tau \int_0^{\frac{2\delta}{p}}\partial_t \left[\phi\(\xi +  \frac{tp+\tau Cp^2(1+t^2)}{\ep}\)\right]dt,
  \end{align*}
 where we used that $p(1+2t\tau Cp)\geq p/2$ if $0\leq tp\leq 2\delta$ and $\delta$ is small enough. 
 Integrating with respect to $t$, we obtain
 \begin{align*}
 J_2^1(\xi)&\leq C\int_0^r \frac{dp}{p^{2s}} \int_0^1\left[\phi\(\xi +  \frac{2\delta+\tau C(p^2+4\delta^2)}{\ep}\) -\phi\(\xi +\frac{\tau Cp^2}{\ep} \)\right] d\tau\\&
 \leq  C\int_0^r \frac{dp}{p^{2s}}=Cr^{1-2s}. 
 \end{align*}
We also estimate
  \begin{align*}
  J_2^2(\xi)\leq C\int_r^\infty \frac{dp}{p^{1+2s}}\int_0^{\frac{2\delta}{p}}dt=\frac{C\delta}{r^{1+2s}}.
  \end{align*}
 Choosing $r=r(\delta)$ such that $r=o_\delta(1)$ and $\delta/r^{1+2s}=o_\delta(1)$, we obtain $ J_2(\xi)\leq o_\delta(1).$ The lower bound can be proven with a similar argument. We conclude that 
 \begin{equation}\label{J2_step2_s<1/2}
\int_\R\dot\phi(\xi)J_2(\xi)\,d\xi=o_\delta(1). 
\end{equation}
Finally, for $\tau_0$ as in Lemma  \ref{lemm2s<1/2_1} and $\delta<\tau_0/2$, from \eqref{lemm2s<1/2_est2} and \eqref{lemms<1/2_1_est},
\begin{align*}
|J_3(\xi)|&\leq 2 \int_{\{0<d(x+z) -d(x)<\delta ,~\nabla d(x) \cdot z >2\delta\}} \frac{dz}{|z|^{n+2s}}\\&
=2 \int_{\{d(x+z) -d(x)<\delta,~\nabla d(x) \cdot z>2\delta \}}\one_{\{0<d(x+z) -d(x)<\delta\}}(z) \,\frac{dz}{|z|^{n+2s}}\\&
\leq 2 \int_{\{d(x+z) -d(x)<\delta,~2\delta<\nabla d(x) \cdot z< \tau_0\}} \frac{dz}{|z|^{n+2s}}+ 2 \int_{\{\nabla d(x) \cdot z>\tau_0\}} \frac{dz}{|z|^{n+2s}}\\&
\leq C. 
 \end{align*}
Recalling \eqref{chiclosetogamma}, we see that by the Dominated Convergence Theorem, $J_3(\xi)= o_\delta(1)$ and 
\begin{equation}\label{J3_step2_s<1/2}
\int_\R\dot\phi(\xi) J_3(\xi)\,d\xi=o_\delta(1).  
\end{equation}

From \eqref{I_3s<1/2}, \eqref{J1_step2_s<1/2}, \eqref{J2_step2_s<1/2} and \eqref{J3_step2_s<1/2}, letting first $\ep\to0$ and then $\delta\to0$, \eqref{I_3limts<1/2} follows.
The limit in  \eqref{I_2limts<1/2} can be proven with a similar argument.

 \medskip
 
 \noindent
 \underline{\bf Conclusion}.
Recalling \eqref{aepsplits<1/2}, we combine \eqref{I_1limts<1/2},  \eqref{I_2limts<1/2},  \eqref{I_3limts<1/2}, and  \eqref{I_4limts<1/2}   to complete the proof.
\qed
 
 \section*{Acknowledgements}

The first author has been supported by the NSF Grant DMS-2155156 ``Nonlinear PDE methods in the study of interphases.'' 
The second author has been supported by the Australian Laureate Fellowship FL190100081 ``Minimal surfaces, free boundaries and partial differential equations.''
Both authors acknowledge the support of NSF Grant DMS RTG 18403.



\end{document}